\newcommand*\patchAmsMathEnvironmentForLineno[1]{
  \expandafter\let\csname old#1\expandafter\endcsname\csname #1\endcsname
  \expandafter\let\csname oldend#1\expandafter\endcsname\csname end#1\endcsname
  \renewenvironment{#1}
     {\linenomath\csname old#1\endcsname}
     {\csname oldend#1\endcsname\endlinenomath}}
\newcommand*\patchBothAmsMathEnvironmentsForLineno[1]{
  \patchAmsMathEnvironmentForLineno{#1}
  \patchAmsMathEnvironmentForLineno{#1*}}
\newcommand{\Hom}{\operatorname{Hom}}
\theoremstyle{definition}
\newtheorem{dfn}{Definition}[section]
\theoremstyle{plain}
\newtheorem{thm}[dfn]{Theorem}
\newtheorem{prop}[dfn]{Proposition}
\newtheorem{lem}[dfn]{Lemma}
\newtheorem{cor}[dfn]{Corollary}
\newtheorem{conj}[dfn]{Conjecture}
\theoremstyle{remark}
\newtheorem{rem}[dfn]{Remark}
\newtheorem{ex}[dfn]{Example}
\newtheorem{constr}[dfn]{Construction}
\begin{document}
\title[Gorenstein terminal Fano threefolds]{Rational curves on Fano threefolds with Gorenstein terminal singularities}
\author{Fumiya Okamura}
\address{The Institute of Science and Engineering, Chuo University, 
1-13-27 Kasuga, Bunkyo-ku, Tokyo 112-8551, Japan}
\email{fokamura941@g.chuo-u.ac.jp}
\begin{abstract}
    We study the spaces of rational curves on Fano threefolds with Gorenstein terminal singularities.
    We generalize the results regarding Geometric Manin's Conjecture for smooth Fano threefolds, including the classification of subvarieties with higher $a$-invariants and Movable Bend-and-Break lemma.
    We also show Geometric Manin's Conjecture for some singular del Pezzo threefolds.
\end{abstract}

\keywords{Rational curves, moduli spaces, Fano threefolds, Geometric Manin's Conjecture}
\subjclass[MSC classification]{14H10, 14J45}

\maketitle
\section{Introduction}
Throughout the paper, we work over an algebraically closed field $k$ of characteristic $0$.
% For a normal projective variety $X$, we define the canonical divisor $K_{X}$ as follows: on the smooth locus $X_{\mathrm{sm}} \subset X$, the canonical divisor is defined to be the Cartier divisor satisfying 
% \[\mathcal{O}(K_{X_{\mathrm{sm}}}) \cong \bigwedge^{\dim X}\Omega_{X_{\mathrm{sm}}},\]
% where $\Omega_{X_{\mathrm{sm}}}$ is the sheaf of differentials. 
% Since $X$ is normal, $X_{\mathrm{sm}}$ is an open subset of $X$ such that the complement $X\setminus X_{\mathrm{sm}}$ has codimension at least $2$ in $X$.
% Then $K_{X_{\mathrm{sm}}}$ is naturally extended to the Weil divisor $K_{X}$ on $X$.
We say that a normal projective variety $X$ is Fano if there exists a positive integer $r \in \mathbb{Z}$ such that $-rK_{X}$ is an ample Cartier divisor.
% \par
We are interested in rational curves on a Fano variety $X$.
By Mori's Bend-and-Break technique, developed in successive papers \cite{Mori1979}, \cite{Mori1982}, \cite{Miyaoka1986}, \cite{KoMiMo1992c}, \cite{Hacon2007}, it is shown that Fano varieties are uniruled, and more generally if $X$ has only mild singularities such as klt singularities, then $X$ is rationally connected.
These results tell us the importance of studying the spaces of rational curves on $X$.
More specifically, we will study the Hom space $\mathrm{Hom}(\mathbb{P}^{1}, X)$ parametrizing morphisms $f\colon \mathbb{P}^{1}\rightarrow X$, or Kontsevich spaces $\overline{M}_{0,r}(X)$ parametrizing $r$-pointed stable maps on $X$ of genus $0$.
For the notion of Kontsevich spaces, see for example \cite{Fulton1997}.
Our goal is to count the number of irreducible components of the Hom space $\mathrm{Hom}(\mathbb{P}^{1}, X, \alpha)$ parametrizing morphisms $f\colon \mathbb{P}^{1} \rightarrow X$ such that $f_{*}\mathbb{P}^{1} = \alpha$, for each curve class $\alpha \in \overline{\mathrm{Eff}}_{1}(X)_{\mathbb{Z}}$.
This is motivated by Geometric Manin's Conjecture, introduced in \cite{Lehmann2019}.
Geometric Manin's Conjecture predicts the asymptotic formula for the number of irreducible components of $\mathrm{Hom}(\mathbb{P}^{1}, X, \alpha)$ after removing pathological components.
We expect that pathological components can be detected by two geometric invariants $a$ and $b$ defined as follows:
for simplicity, we let $X$ be a smooth projective variety and $L$
be a nef and big divisor on $X$.
Then the $a$-invariant is defined as
\[a(X,L) = \inf \{t\in \mathbb{R} \mid K_{X} + tL\mbox{ is pseudo-effective}\}.\]
When $a(X,L)>0$ (or equivalently, $X$ is uniruled, \cite{Boucksom2013}), then the $b$-invariant is defined as 
\[b(X,L) = \dim \langle F(X,L)\rangle,\]
where $\langle - \rangle$ denotes the linear span and 
\[F(X,L) = \{\alpha \in \mathrm{Nef}_{1}(X) \mid (K_{X} + a(X,L)L)\cdot \alpha = 0\}.\]
These invariants are birational invariants by \cite{Hassett2015}.
Hence we define $a$ and $b$-invariants when $X$ is singular: 
\[a(X, L) := a(\tilde{X}, \phi^{*}L),\quad  b(X, L) := b(\tilde{X}, \phi^{*}L), \] 
where $\phi\colon \tilde{X}\rightarrow X$ is a smooth resolution.
\par
Conceptually, pathological components should come from the contribution of generically finite morphisms $f\colon Y\rightarrow X$ onto the images such that 
\[(a(Y, -f^{*}K_{X}), b(Y, -f^{*}K_{X})) \ge (a(X, -K_{X}), b(X, -K_{X}))\]
in the lexicographic order.
The authors of \cite{Lehmann2019} defined accumulating components as the candidates of pathological components.
For the definition of accumulating components, see \cite{Tanimoto2021}, or Definition \ref{breaking} of this paper (the definition has been slightly modified from \cite{Lehmann2019}).
We also give a precise statement of Geometric Manin's Conjecture in Section 6.1.
The paper \cite{LRT2024nonfree_section} formulate Geometric Manin's Conjecture in most general setting. However, we do not consider this version of the conjecture. 

\subsection*{Main results}
In this paper, we consider ($\mathbb{Q}$-factorial) Gorenstein terminal Fano threefolds $X$ and generalize the results of \cite{Beheshti2022} on smooth Fano threefolds.
One of the main results is the classification of subvarieties $Y$ with higher $a$-invariants: 
\begin{thm}[\S.3]\label{highA}
Let $X$ be a Gorenstein terminal Fano threefold.
Let $Y \subset X$ be a $2$-dimensional subvariety of $X$  satisfying $a(Y, -K_{X}) > 1$ and let $\phi \colon \tilde{Y} \rightarrow Y$ be a smooth resolution.
\begin{enumerate}[(1)]
\item 
If $\kappa (\tilde{Y}, K_{\tilde{Y}} - a(Y, -K_{X})\phi^{*}K_{X}) = 1$, then $Y$ is covered by a family of $-K_{X}$-lines.
\item 
If $\kappa (\tilde{Y}, K_{\tilde{Y}} - a(Y, -K_{X})\phi^{*}K_{X}) = 0$ and $Y$ is normal, then $(Y, -K_{X}|_{Y})$ is isomorphic to either $(\mathbb{P}^{2}, \mathcal{O}(1))$, $(Q, \mathcal{O}(1))$, or $(\mathbb{P}^{2}, \mathcal{O}(2))$, where $Q \subset \mathbb{P}^{3}$ is a smooth quadric or a quadric cone.
Moreover, if $Y$ is a Cartier divisor, then $Y$ is an E2, E3, E4, E5 divisor, or E1 divisor of the form $(\mathbb{P}^{1} \times \mathbb{P}^{1}, \mathcal{O}(1,1))$.
\item 
If $\kappa (\tilde{Y}, K_{\tilde{Y}} - a(Y, -K_{X})\phi^{*}K_{X}) = 0$ and $Y$ is non-normal, then the normalization of $Y$ is isomorphic to either $(\mathbb{P}^{2}, \mathcal{O}(1))$, $(Q, \mathcal{O}(1))$, or $(\mathbb{P}^{2}, \mathcal{O}(2))$, where $Q \subset \mathbb{P}^{3}$ is a smooth quadric or a quadric cone.
If furthermore $-K_{X}$ is very ample and $Y$ is a Cartier divisor, then one of the following holds:
\begin{enumerate}[(a)]
\item 
There is a morphism $\pi \colon X \rightarrow \mathbb{P}^{1}$ whose general fiber is a del Pezzo surface of degree $4$ such that $Y = \pi^{-1}(q)$ for some point $q \in \mathbb{P}^{1}$, 
\item 
There is a birational map $\rho \colon X \dashrightarrow X'$ to a quartic hypersurface in $\mathbb{P}^{4}$ such that $\rho|_{Y}$ is an isomorphism and $\rho(Y)$ is a hyperplane section of $X'$. 
\end{enumerate}
\end{enumerate}
\end{thm}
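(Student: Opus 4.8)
The plan is to analyze the adjoint divisor $D := K_{\tilde Y} - a\,\phi^{*}K_X$ on the resolution, where $a := a(Y,-K_X) > 1$ and $H := -\phi^{*}K_X$ is nef and big; by definition $D$ is pseudo-effective and lies on the boundary of the pseudo-effective cone of $\tilde Y$, so $\kappa(D)\le 1$. For part (1), let $g\colon\tilde Y\to B$ be the Iitaka fibration of $D$, so that $B$ is a curve and a general fibre $F$ is a smooth curve with $F^{2}=0$ and $D\cdot F=0$. Since $F$ moves in a covering family and $H$ is big, $H\cdot F>0$, and because $-K_X$ is an \emph{ample Cartier} divisor, $H\cdot F=-K_X\cdot\phi_{*}F$ is a positive integer. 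Adjunction gives $2g(F)-2=K_{\tilde Y}\cdot F=-a\,(H\cdot F)\le -a<-1$, forcing $g(F)=0$ and, by integrality, $H\cdot F=1$. Hence $\phi(F)$ is a rational curve with $-K_X\cdot\phi(F)=1$, i.e. a $-K_X$-line, and these lines cover $Y$ since the fibres of $g$ cover $\tilde Y$. This argument is insensitive to the singularities of $X$ beyond the Gorenstein hypothesis.

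For (2) and (3) I reduce to a surface classification. Let $\nu\colon\bar Y\to Y$ be the normalization, $\psi\colon\tilde Y\to\bar Y$ the induced resolution, and $\bar H:=\nu^{*}(-K_X|_Y)$, an ample Cartier divisor on $\bar Y$ with $\psi^{*}\bar H=H$; then $a(\bar Y,\bar H)=a$ and $D$ computes $\kappa$ of $K_{\bar Y}+a\bar H$ as well. Since $a>0$ the surface $\tilde Y$ is uniruled, so a $(K_{\tilde Y}+(a-\varepsilon)H)$-MMP (for $0<\varepsilon\ll 1$) terminates in a smooth Mori fibre space $\mathcal Y\to Z$, and $H$ being big forces $Z=\mathrm{pt}$ or $Z=\mathbb P^{1}$. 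Pushing $D$ forward and using $\kappa(D)=0$ to kill it modulo the vertical rigid part, one gets $-K_{\mathcal Y}=a\mathcal H+(\text{rigid})$ on $\mathcal Y$, and then contracting the $\mathcal H$-trivial curves to pass from $\mathcal Y$ to the ample model $\bar Y$, a short case analysis ($\mathcal Y=\mathbb P^{2}$, or $\mathcal Y=\mathbb F_{e}$ with $-K_{\mathcal Y}$ divisible by $a$) shows that $(\bar Y,\bar H)$ is isomorphic to $(\mathbb P^{2},\mathcal O(1))$, $(\mathbb P^{2},\mathcal O(2))$, or $(Q,\mathcal O(1))$ with $Q$ a smooth quadric or a quadric cone. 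When $Y$ is normal this is the first assertion of (2). When $Y$ is non-normal, the conductor $\mathfrak c$ on $\bar Y$ is nonzero, and comparing it with $\bar H$ through the conductor formula $\nu^{*}\omega_Y=\omega_{\bar Y}(\mathfrak c)$, together with the constraint $(-K_X|_Y)^{2}=\bar H^{2}$ (which is $1$ or $2$ in the first two cases), rules out the first two possibilities, leaving only the Veronese surface $(\mathbb P^{2},\mathcal O(2))$ — the first assertion of (3).

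It remains to prove the ``moreover'' statements with $Y$ a Cartier divisor. By adjunction $\mathcal O_Y(Y)=\omega_Y\otimes(\omega_X|_Y)^{-1}$, which in the normal cases equals $\mathcal O_{\mathbb P^{2}}(-2)$, $\mathcal O_{\mathbb P^{2}}(-1)$, $\mathcal O_{\mathbb P^{1}\times\mathbb P^{1}}(-1,-1)$, or $\mathcal O_{Q}(-1)$ respectively. In each case a covering family of lines (resp. rulings) $C\subset Y$ satisfies $-K_X\cdot C>0$ and $Y\cdot C=\deg(\mathcal O_Y(Y)|_C)<0$, so by the Cone and Contraction Theorems for the $\mathbb Q$-factorial terminal threefold $X$ there is a $K_X$-negative extremal ray $R$ containing $[C]$ with $Y\cdot R<0$; since the curves in $R$ lie in $Y$ and cover it, its contraction is divisorial with exceptional locus $Y$. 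Matching $(Y,\mathcal O_Y(Y))$ against Mori's classification of divisorial contractions of Gorenstein terminal threefolds identifies $Y$ as an E5 divisor when $(Y,-K_X|_Y)=(\mathbb P^{2},\mathcal O(1))$, an E2 divisor when $(\mathbb P^{2},\mathcal O(2))$, an E4 divisor when $Q$ is the quadric cone, and, when $Q=\mathbb P^{1}\times\mathbb P^{1}$, an E3 divisor or an E1 divisor of the stated form according to whether $N_{1}(Y)\to N_{1}(X)$ is, respectively, not injective or injective.

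Finally, suppose $Y$ is a non-normal Cartier divisor with normalization the Veronese surface. The same adjunction, combined with a positivity argument for the nef and big divisor $-K_X$, shows $\mathcal O_Y(Y)$ is either trivial or pulls back to $\mathcal O_{\mathbb P^{2}}(2)$. In the trivial case, $H^{1}(\mathcal O_X)=0$ gives $h^{0}(X,\mathcal O_X(Y))=2$, and $Y|_Y\equiv 0$ shows $|Y|$ is base-point free, so it defines $\pi\colon X\to\mathbb P^{1}$ with $Y$ a fibre and general fibre a del Pezzo surface of degree $\bar H^{2}=4$, which is case (a). In the remaining case, $Y|_Y$ is nef and big and a Riemann--Roch computation forces $(-K_X)^{3}=4$ and exhibits a birational map $\rho\colon X\dashrightarrow X'\subset\mathbb P^{4}$ onto a quartic hypersurface restricting to an isomorphism on $Y$ with image a hyperplane section, which is case (b). The main obstacle is this last part together with the previous paragraph: the surface classification and part (1) proceed essentially as in \cite{Beheshti2022}, but establishing extremality and divisoriality and running through the E1--E5 list must be redone for Gorenstein \emph{terminal} threefolds, and realizing the non-normal divisor $Y$ either as a fibre of a degree-$4$ del Pezzo fibration or as a hyperplane section of a quartic model is genuinely new, relying on the geometry of non-normal del Pezzo surfaces and a careful study of the linear systems $|Y|$ and $|-K_X|$.
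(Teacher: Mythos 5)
Your overall strategy tracks the paper's: part (1) via the Iitaka fibration and adjunction on a general fibre; parts (2)--(3) via the classification of adjoint-rigid polarized surfaces on the normalization; the normal Cartier case via anti-ampleness of $Y|_{Y}$ and the Cutkosky list of divisorial contractions; and the non-normal Cartier case via the linear system $|Y|$. Two of your sub-arguments take a different (but acceptable) route. For the surface classification the paper does not run the MMP by hand: it cites \cite{Lehmann2021a}*{Lemma 5.3} to get canonical singularities and $K_{Y}\sim_{\mathbb{Q}}a(Y,-K_{X})K_{X}|_{Y}$, and then \cite{Hoering2010}*{Proposition 1.3} for the list; your MMP sketch is essentially a proof of that citation, though the step ``pushing $D$ forward and using $\kappa(D)=0$ to kill it modulo the vertical rigid part'' would need to be carried out carefully since $H$ is only nef and big on the resolution. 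To rule out $(\mathbb{P}^{2},\mathcal{O}(1))$ and $(Q,\mathcal{O}(1))$ as normalizations of a non-normal $Y$, the paper argues that $\nu$ would have to be defined by a proper sublinear system of $|\mathcal{O}(1)|$, which cannot give a finite birational non-isomorphism; your substitute, ``the constraint $(-K_{X}|_{Y})^{2}=\bar{H}^{2}$,'' is not a constraint at all ($\nu$ is birational, so this equality is automatic), and the conductor formula by itself does not finish the job, so this step needs the linear-system argument or a genuine analysis of the conductor curve.

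The substantive gap is in the non-normal Cartier case. You assert that ``$\mathcal{O}_{Y}(Y)$ is either trivial or pulls back to $\mathcal{O}_{\mathbb{P}^{2}}(2)$'' by ``a positivity argument,'' but this dichotomy is precisely what has to be proved and does not follow from positivity of $-K_{X}$: writing $\mathfrak{c}$ for the conductor, adjunction gives $\nu^{*}\mathcal{O}_{Y}(Y)=\omega_{\mathbb{P}^{2}}(\mathfrak{c})\otimes\mathcal{O}(2)=\mathcal{O}(\deg\mathfrak{c}-1)$, and $\deg\mathfrak{c}\ge 1$ is a priori unbounded until you identify $Y$ itself. The paper pins $Y$ down first: $\nu$ is given by a sublinear system $V\subsetneq|\mathcal{O}(2)|$ with $\dim V\in\{3,4\}$, so $Y$ is either a quartic (Steiner-type) surface in $\mathbb{P}^{3}$, whence $\omega_{Y}\cong\mathcal{O}_{Y}$ and $Y|_{Y}\sim -K_{X}|_{Y}$, or a degree-$4$ complete intersection of two quadrics in $\mathbb{P}^{4}$ singular along a line, whence $\omega_{Y}\cong\mathcal{O}_{Y}(-1)$ and $Y|_{Y}\sim 0$; only with these explicit models do the computations $h^{0}(\mathcal{O}_{X}(Y))=2$ resp.\ $5$ (via Kawamata--Viehweg vanishing) and the constructions of $\pi$ and $\rho$ go through. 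Relatedly, in case (b) your ``Riemann--Roch computation forces $(-K_{X})^{3}=4$'' is not the relevant statement (and is false in general): what one needs is $Y^{3}=(Y|_{Y})^{2}=((-K_{X})|_{Y})^{2}=4$, and the quartic model $X'$ is the image of the map defined by $|Y|$, not by $|-K_{X}|$. You should repair this part by first classifying the non-normal projections of the Veronese surface and then reading off $\omega_{Y}$, $\mathcal{O}_{Y}(Y)$, and $h^{0}(\mathcal{O}_{X}(Y))$ from the two models.
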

Theorem \ref{highA} helps us to find components of $\mathrm{Hom}(\mathbb{P}^{1},X)$ whose dimension are higher than the expected dimension.
We also give a partial results on the classification of $a$-covers, i.e., dominant generically finite morphisms $f\colon Y\rightarrow X$ of degree $\ge 2$ such that $a(Y, -f^{*}K_{X}) = 1$: 
\begin{thm}[\S.4]\label{a-covers}
    Let $X$ be a Gorenstein terminal Fano threefold.
    Let $f\colon Y\rightarrow X$ be an $a$-cover with $Y$ smooth. 
    Set $\kappa := \kappa(Y, K_{Y}-f^{*}K_{X}) \in \{0,1,2\}$. 
    \begin{itemize}
        \item When $\kappa = 2$, $f$ is birationally a base change of a family of $-K_{X}$-conics, 
        \item Suppose that $X$ is a factorial del Pezzo threefold of degree at least $2$ (see Definition \ref{Def_dP} for del Pezzo varieties).
        Then $X$ has no $a$-covers with $\kappa = 1$, 
        \item Suppose that $X$ is factorial and admits no E5 contractions.
        Then $X$ has no $a$-covers with $\kappa = 0$.
    \end{itemize}
\end{thm}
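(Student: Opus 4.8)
\emph{Sketch of the intended argument.}
Set $D:=K_Y-f^{*}K_X$; since $Y$ is smooth and $f$ finite, $D$ is the \emph{effective} ramification divisor of $f$, so $\kappa(Y,D)\ge 0$, and $a(Y,-f^{*}K_X)=1$ means precisely that $D$ is pseudo-effective while $D-\varepsilon f^{*}(-K_X)$ fails to be so for every $\varepsilon>0$. In particular $D$ is not big: otherwise, as $f^{*}(-K_X)$ is $\mathbb{Q}$-effective (pullback of an ample divisor), $D-\varepsilon f^{*}(-K_X)$ would be pseudo-effective for small $\varepsilon$. Hence $\kappa\in\{0,1,2\}$. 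As $a$ and $b$ are birational invariants we may replace $Y$ by a resolution on which the Iitaka fibration $\pi\colon Y\to Z$ of $D$ is a morphism; then we study its general fibre $G$, of dimension $3-\kappa$, on which $D|_{G}\equiv 0$ and on which $f$ restricts to a generically finite morphism onto its image in $X$. If $\kappa=2$, $G=F$ is a smooth curve with trivial normal bundle in $Y$, so $K_F=K_Y|_F$; combined with $D|_F\equiv 0$ this gives $\deg K_F=f^{*}K_X\cdot F=K_X\cdot f_{*}F<0$, whence $F\cong\mathbb{P}^{1}$ and $-K_X\cdot f_{*}F=2$. Thus $f$ maps the general fibre of $\pi$ onto a $-K_X$-conic, and the induced rational map from $Z$ to the parameter space of $-K_X$-conics, together with the pullback of the universal conic, realizes $Y$, birationally over $X$, as a base change of a family of $-K_X$-conics; the only delicate point, treated exactly as in \cite{Beheshti2022}, is the locus over which the image conic degenerates (to a reducible conic or a doubled $-K_X$-line), which is absorbed into the same statement. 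This is the first bullet.

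Next suppose $\kappa=1$ and $X$ is a factorial del Pezzo threefold of degree $\ge 2$, with fundamental divisor $H$ (so $-K_X=2H$). Now $G=S$ is a smooth surface, $f|_S$ is generically finite onto a surface $Y_0:=f(S)\subset X$, and the $Y_0$ sweep out $X$ as $S$ varies over the fibration. Since $\kappa(S,D|_S)=0$ and $(f|_S)^{*}(-K_X)$ is big and nef, one gets $a(S,-f^{*}K_X|_S)=1$, equivalently $a(S,f^{*}H|_S)=2$ (if $a$ were $<1$, the pseudo-effective divisor $D|_S=K_S-f^{*}K_X|_S$ would be a sum of a pseudo-effective and a big divisor, contradicting $\kappa(D|_S)=0$). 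When $f|_S$ is birational onto $Y_0$ this says $a(Y_0,H|_{Y_0})=2$; when $\deg(f|_S)\ge 2$, a two-dimensional analogue of the third-bullet analysis applies to $f|_S$. One then uses that $X$ is factorial (so every surface $Y_0\subset X$ is Cartier) and that small multiples of $H$ are very positive on a degree $\ge 2$ del Pezzo threefold, so that adjunction on $X$ computes $a(\widetilde{Y_0},\nu^{*}H|_{Y_0})$ for a resolution $\nu\colon\widetilde{Y_0}\to Y_0$ in terms of the ``degree'' $m\ge 1$ of $Y_0$ and shows that it is never $2$; combined with the surface analysis in the remaining case this rules out the cover, which is the second bullet.

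Finally suppose $\kappa=0$ and $X$ is factorial with no E5 contraction. Then $D=R$ is a rigid effective divisor and $K_Y=f^{*}K_X+R$. If $R=0$ then $f$ is \'etale in codimension $1$; but $X$, being a Gorenstein terminal Fano threefold, is rationally connected, hence simply connected, and its singularities are isolated cDV points with simply connected links, so $X$ admits no nontrivial such cover, contradicting $\deg f\ge 2$. Hence $R\neq 0$; as $f$ is finite, $-(K_Y-R)=f^{*}(-K_X)$ is ample, so $R-K_Y$ is ample and $R$ is a nonzero rigid effective divisor of a very restricted type. Analyzing the prime components of $R$ and their images in $X$ --- each Cartier since $X$ is factorial --- one shows this is possible only when $X$ contains a $\mathbb{P}^{2}$ with normal bundle $\mathcal{O}_{\mathbb{P}^{2}}(-2)$, i.e.\ admits an E5 extremal contraction to a threefold with a $\tfrac12(1,1,1)$ point, $f$ being (birationally over $X$) the pullback of the index-one cover of that point. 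Forbidding E5 contractions therefore forbids $\kappa=0$ $a$-covers, giving the third bullet.

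The Iitaka-fibration reduction and the adjunction for $\kappa=2$ are routine; the real difficulty --- and the place where the hypotheses enter --- is the last two paragraphs: excluding covering families of surfaces with $a$-invariant $2$ (relative to $H$) on del Pezzo threefolds of degree $\ge 2$, and matching rigid ramification divisors with E5 contractions. Both genuinely use the explicit birational geometry of Gorenstein terminal Fano threefolds --- the E1--E5 classification of divisorial contractions and the classification of two-dimensional subvarieties with large $a$-invariant from Theorem \ref{highA} --- rather than general positivity arguments.
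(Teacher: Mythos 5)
Your three-way split by $\kappa$ and your treatment of the $\kappa=2$ and $\kappa=0$ cases track the paper's proof closely: for $\kappa=2$ the paper likewise restricts to the general Iitaka fibre, finds a rational curve with $-K_X\cdot f_*F=2$ mapping birationally to a conic (you should at least note why a double cover of a $-K_X$-line is impossible, namely that lines do not move in a dominant family), and uses the Hilbert scheme of conics to produce the Cartesian diagram; for $\kappa=0$ the paper also reduces (after a relative MMP) to the alternative ``rigid ramification divisor lying over an E5 plane'' versus ``\'etale in codimension one over a simply connected $X$''. These two bullets are essentially the paper's argument.

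The genuine gap is in the $\kappa=1$ bullet. Your exclusion of a covering family of adjoint-rigid surfaces $Y_0\subset X$ with $a(Y_0,H)=2$ rests on the claim that adjunction, $K_{Y_0}=(K_X+Y_0)|_{Y_0}=(m-2)H|_{Y_0}$ for $Y_0\sim mH$, computes $a(\widetilde{Y_0},\nu^*H)$ as $2-m\le 1$. That computation is only valid when $Y_0$ is normal with canonical singularities; for a non-normal $Y_0$ the conductor makes $a(\widetilde{Y_0},\nu^*H)$ strictly larger than the adjunction value, and non-normal adjoint-rigid surfaces genuinely occur on singular Gorenstein terminal Fano threefolds (this is the whole point of part (3) of Theorem \ref{highA} and of Examples \ref{typeI} and \ref{typeII}), so your argument does not rule them out. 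The paper's Lemma \ref{adjoint_rigid_surface} closes exactly this hole in a different way: the classification of adjoint-rigid polarized surfaces with $a=2$ forces the normalization to be $(Q,\mathcal{O}_Q(1))$, the fact that every proper sublinear system of $|\mathcal{O}_Q(1)|$ has base points forces $\nu$ to be an isomorphism (so $Y_0$ is normal after all), and the residual case $H^2\cdot Y_0=2$, i.e.\ $H^3=2$ and $Y_0\in|H|$, is killed by analyzing the double cover $\phi_{|H|}\colon X\to\mathbb{P}^3$ branched in a quartic, whose restriction to $Y_0$ would be a ramified double cover of a plane, contradicting $(Y_0,H)\cong(Q,\mathcal{O}_Q(1))$. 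You should also make explicit the reduction from ``$a$-cover with $\kappa=1$'' to ``covering family of two-dimensional adjoint-rigid subvarieties'' (the paper quotes \cite{Lehmann2022}*{Lemma 4.9} for this); your Iitaka-fibration heuristic only gives $a(f(S),H)\ge 2$ when $f|_S$ has degree $\ge 2$, and the case of equality together with adjoint rigidity of the image needs justification.
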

By these classification results, we can narrow down the candidates for accumulating components to a certain extent.
Once one can remove accumulating components, Movable Bend-and-Break plays a very efficient role in the induction step of Geometric Manin's Conjecture.
% Movable Bend-and-Break asserts that any free rational curve on a Fano variety $X$ of sufficiently large $-K_{X}$-degree deforms to a union of two free rational curves of lower $-K_{X}$-degrees.
The third main theorem is as follows:
\begin{thm}[Movable Bend-and-Break, \S.5]\label{MBB_noE5}
    Let $X$ be a terminal factorial Fano threefold without E5 contractions.
    Then any component of $\overline{M}_{0,0}(X)$ generically parametrizing free stable maps of anticanonical degree at least $5$ contains a stable map $f\colon C\rightarrow X$ such that 
    \begin{itemize}
        \item the domain $C$ consists of two irreducible components $C_{1}, C_{2}$, and 
        \item each restriction $f|_{C_{i}}\colon C_{i}\rightarrow X$ is a free rational curve.
    \end{itemize}
\end{thm}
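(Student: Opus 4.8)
The plan is to exhibit inside the given component $M\subset\overline{M}_{0,0}(X)$ a codimension-one component $\Delta$ of its boundary — the locus of stable maps with reducible domain — a general point of which is a two-component stable map both of whose restrictions are free, since such a point is already the map asserted by Theorem~\ref{MBB_noE5}. (If a general member of $M$ already has reducible domain with free restrictions there is nothing to prove, so assume a general member is an irreducible free curve, of anticanonical degree $d\geq5$, so that $\dim M=d$.)

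First I would produce a reducible degeneration and check that $\partial M$ is non-empty of codimension one. Passing to the component $M_2\subset\overline{M}_{0,2}(X)$ over $M$, which has dimension $d+2$, either the two-pointed evaluation $\mathrm{ev}\colon M_2\to X\times X$ is dominant, in which case a general fibre has dimension $d+2-6=d-4\geq1$ and Mori's bend-and-break, applied to a complete curve inside a general fibre with the two marked points carrying fixed distinct images, produces $[f_0\colon C_0\to X]\in M$ with $C_0$ reducible and the marked points on distinct components; or $\mathrm{ev}$ is not dominant, meaning the deformations of a general member of $M$ through a general point of $X$ sweep out only a surface, which — moving in a family covering $X$ — is, by an analysis of its $a$-invariant and Theorem~\ref{highA} (using that $X$ is factorial and without E5 contractions), confined to an explicit list: covered by $-K_X$-lines, or one of $(\mathbb{P}^2,\mathcal{O}(1))$, $(Q,\mathcal{O}(1))$, $(\mathbb{P}^2,\mathcal{O}(2))$, a degree-$4$ del Pezzo fibre, a quartic hyperplane section, on each of which a free curve of anticanonical degree $\geq5$ visibly breaks into two free pieces. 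Assuming the first alternative and grouping the dual tree of $C_0$ along the path between the two marked components gives $C_0=C_1\cup C_2$ with one node, $d_i:=-K_X\cdot(f_0)_*[C_i]\geq1$, $d_1+d_2=d$; so $\partial M\neq\emptyset$, and a dimension count of the gluing loci shows $\partial M$ has a component $\Delta$ of codimension one.

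Next I would study a general point $[g_1\cup g_2]\in\Delta$, with $\beta_i$ the class of $g_i$, $d_i=-K_X\cdot\beta_i$, and $M_i\ni[g_i]$ the relevant component. The gluing morphism
\[
\overline{M}_{0,1}(X,\beta_1)\times_X\overline{M}_{0,1}(X,\beta_2)\ \supseteq\ Z\ \longrightarrow\ \Delta
\]
is dominant with finite general fibre, so $d-1\geq\dim\Delta=\dim Z\geq(\dim M_1+1)+(\dim M_2+1)-3\geq d_1+d_2-1=d-1$, forcing all of these to be equalities; in particular $\dim M_i=d_i$, so the general member of each $M_i$ is free and the $\beta_i$-curves dominate $X$. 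It remains to show $g_1,g_2$ are themselves free. If $Z\to\overline{M}_{0,1}(X,\beta_1)$ is dominant then $g_1$ is, for general $[g_1\cup g_2]$, a general member of $M_1$, hence free, and symmetrically $g_2$: done. The real difficulty is the \emph{pinned} case, when for every codimension-one component of $\partial M$ one side — say the $\beta_1$-curves — passes through a fixed proper subvariety $Z_\Delta\subsetneq X$ over which the $\beta_2$-evaluation has fibres of dimension exceeding the expected $d_2-2$. A larger-than-expected fibre over $z$ means $z$ is swept, in a positive-dimensional family, by $\beta_2$-curves that are not free; the deformation theory of rational curves through a point then forces either $z$ to lie on a surface $Y$ with $a(Y,-K_X)\geq1$, or $Z_\Delta$ to be the base of a positive-dimensional family of multiple covers, i.e.\ an $a$-cover $Y'\to X$. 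By Theorems~\ref{highA} and~\ref{a-covers} every such $Y$ lies in the list above and every such $a$-cover has $\kappa\in\{1,2\}$ — $\kappa=2$ being a base change of a family of $-K_X$-conics — while $\kappa=0$ is excluded because $X$ is factorial without E5 contractions; on each of these one again breaks degree-$\geq5$ curves by hand. Hence no codimension-one boundary component is pinned on both sides, a general point of a non-pinned one is the desired stable map, and an induction on $d$ absorbs the remaining bookkeeping (regroup the components of a free break of degree $\geq5$; a free break of degree $\leq4$ finishes the chain).

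The hard part is this pinned-case analysis: one must show that the loci $Z_\Delta$ forcing non-free pieces are controlled \emph{exactly} by Theorems~\ref{highA} and~\ref{a-covers}, and that the isolated cDV points of $X$ — present because $X$ is only Gorenstein terminal — produce no extra accumulation near which the two-pointed evaluation degenerates. Making this dichotomy precise and exhaustive, and executing the explicit breakings on the surviving surfaces and on the $\kappa\in\{1,2\}$ $a$-covers, is where factoriality and the absence of E5 contractions are indispensable, mirroring their role in the classification of $a$-covers in Theorem~\ref{a-covers}.
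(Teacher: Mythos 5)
Your overall architecture (degenerate once via two\nobreakdash-pointed Bend-and-Break, then run a dimension count on a codimension-one boundary component and fight the ``pinned'' case) is a legitimate strategy, but as written it has two concrete errors and leaves the genuinely hard step unproved. First, the dichotomy in your opening step is false. When $\mathrm{ev}\colon M_{2}\to X\times X$ is not dominant, the curves of $M$ need \emph{not} be confined to a fixed surface with large $a$-invariant: the relevant case is a free curve with $f^{*}T_{X}=\mathcal{O}(a)\oplus\mathcal{O}^{\oplus 2}$, whose deformations through one general point sweep out only a $-K_{X}$-conic while the family as a whole dominates $X$; Theorem~\ref{highA} does not apply because the swept surface moves with the point. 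The paper isolates exactly this case (Proposition~\ref{multiple_covers}): such maps are multiple covers of $-K_{X}$-conics, and they are broken by degenerating the covering map $\mathbb{P}^{1}\to\mathbb{P}^{1}$ (Corollary~\ref{MBB_multiple_covers}), not by an argument on a special surface. Second, the inference ``$\dim M_{i}=d_{i}$, so the general member of $M_{i}$ is free'' is wrong: by Lemma~\ref{free_iff_dominant} freeness of a component is equivalent to dominance of its evaluation map, and non-dominant components of the \emph{expected} dimension exist (e.g.\ either ruling of a smooth quadric E1/E3 divisor is a one-dimensional family of $-K_{X}$-lines). So equality in your dimension count does not close the argument, and everything is pushed into the pinned case.

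That pinned case is precisely where the content of the theorem lies, and you leave it as a sketch whose proposed mechanism does not match the actual obstruction: the problem is not classified by $a$-covers but by non-free components landing in the surfaces of Corollary~\ref{nondom}. The paper's route is different and is what makes the hypotheses bite. Following \cite{Beheshti2022}, one builds a one-parameter family $\tilde{T}\subset\tilde{M}$ by requiring the curves to pass through $r$ general points and to meet $s$ general members of a basepoint free family, with $(r,s)$ dictated by the splitting type of $N_{f/X}$; Bend-and-Break on $\tilde{T}$ then yields a reducible member having at least two free components and whose non-free part is forced to be lines or conics inside the surfaces of Corollary~\ref{nondom} (Theorem~\ref{reducible_domain}, cases (1)--(4)). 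Case (2) is killed by the no-E5 hypothesis, and cases (3)--(4) are killed by choosing the basepoint free family to be pulled back from the contraction $\psi\colon X\to\bar{X}$ of all surfaces in Corollary~\ref{nondom}.(2) (this is where factoriality is used), so that its general member misses them. If you want to keep your boundary-dimension-count strategy, you must (i) treat the multiple-cover-of-conics components separately, and (ii) replace the ``expected dimension implies free'' step and the pinned-case sketch by an actual mechanism that prevents both sides of every codimension-one boundary component from degenerating into the surfaces of Theorem~\ref{highA}; the incidence-condition construction above is one such mechanism.
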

In the last section, we consider cubic hypersurfaces in $\mathbb{P}^{4}$ and complete intersections of two quadrics in $\mathbb{P}^{5}$ with terminal factorial singularities.
In these cases, we study the irreducibility of the spaces of lines and conics.
As a result, we conclude Geometric Manin's Conjecture: 
\begin{thm}[\S.6]\label{GMC_dP}
    Let $X$ be a factorial del Pezzo threefold with an ample generator $H$.
    Suppose that $H^{3} \ge 3$.
    Then Geometric Manin's Conjecture holds for $X$ with $c = 1$ and $\alpha$ is the class of $H$-conics, that is, after removing accumulating components, for each $d\ge 2$, the space $\mathrm{Hom}(\mathbb{P}^{1}, X,d)$ parametrizing morphisms $f\colon \mathbb{P}^{1}\rightarrow X$ such that $H\cdot f_{*}\mathbb{P}^{1} = d$ contains exactly one irreducible component.
\end{thm}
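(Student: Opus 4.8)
The plan is to induct on the $H$-degree $d$, reducing at each stage to a gluing statement for free rational curves, with the base case $d=2$ supplied by the irreducibility of the space of conics; Theorems \ref{highA} and \ref{a-covers} identify the accumulating components and Theorem \ref{MBB_noE5} (Movable Bend-and-Break) powers the inductive step. First I would record the invariants. Since $-K_{X}=2H$ and $\mathrm{Pic}(X)=\mathbb{Z}H$ we have $\rho(X)=1$; for a resolution $\phi\colon\tilde X\to X$, writing $K_{\tilde X}+\phi^{*}(-K_{X})=E$ with $E$ effective and $\phi$-exceptional, the class $E-\varepsilon\phi^{*}(-K_{X})$ has pushforward $-\varepsilon(-K_{X})$ and so fails to be pseudo-effective for $\varepsilon>0$; hence $a(X,-K_{X})=1$, and $b(X,-K_{X})=\rho(X)=1$ as for any Fano variety. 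Moreover $N_{1}(X)_{\mathbb{Z}}=\mathbb{Z}[\ell]$ for a line $\ell\subset X$ (lines exist on $X$), a curve of $H$-degree $d$ has anticanonical degree $2d$, and $\mathrm{Hom}(\mathbb{P}^{1},X,d)=\mathrm{Hom}(\mathbb{P}^{1},X,d[\ell])$; so the statement to prove is that after removing accumulating components $\mathrm{Hom}(\mathbb{P}^{1},X,d)$ has exactly one component for each $d\ge 2$.

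To control the accumulating components I would use the classification results. Because $\rho(X)=1$ the only Mori contraction of $X$ is $X\to\mathrm{pt}$, so $X$ has no divisorial, in particular no E5, contraction, and $X$ is a factorial del Pezzo threefold of degree $H^{3}\ge 3\ge 2$; by Theorem \ref{a-covers}, $X$ then has no $a$-cover with $\kappa\in\{0,1\}$ and every $a$-cover with $\kappa=2$ is birationally a base change of a family of $H$-conics. By Theorem \ref{highA}, any $2$-dimensional $Y$ with $a(Y,-K_{X})>1$ — since $X$ carries no E1--E5 divisor and no del Pezzo fibration — is either swept out by $-K_{X}$-lines or one of the listed non-Cartier surfaces, hence a proper subvariety of $X$. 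Consequently the accumulating components of $\mathrm{Hom}(\mathbb{P}^{1},X,d)$ are exactly the multiple-cover components, the components of curves contained in one of the finitely many high-$a$ surfaces, and those dominated by curves on the $\kappa=2$ $a$-covers, and a component generically parametrizing a free rational curve that is birational onto its image and dominates $X$ is non-accumulating. So for $d\ge 2$ the task is to show $\mathrm{Hom}(\mathbb{P}^{1},X,d)$ has a unique such component.

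For the base case $d=2$, a morphism in $\mathrm{Hom}(\mathbb{P}^{1},X,2)$ is either a two-to-one cover of a line (in a multiple-cover component, hence accumulating) or birational onto an irreducible conic, so its non-accumulating part is a single component precisely when the space of conics on $X$ is irreducible; this irreducibility, including the factorial terminal singular cases, is established in Section 6 for cubic threefolds and complete intersections of two quadrics, and for the quintic del Pezzo threefold it follows from the classical description of conics. A general conic is free, so this component $M_{2}$ is non-accumulating. For the induction I also record that the Fano variety of lines of $X$ is irreducible with general member a free line, so free lines form a single family $M_{1}$, and that given components $N_{i}$ of $\mathrm{Hom}(\mathbb{P}^{1},X,d_{i})$ generically parametrizing free curves, gluing a general member of $N_{1}$ to a general member of $N_{2}$ at a general common point yields an unobstructed, smoothable nodal stable map whose smoothings fill out a unique component $N_{1}\star N_{2}$ of $\mathrm{Hom}(\mathbb{P}^{1},X,d_{1}+d_{2})$, the operation $\star$ being associative and $N_{1}\star N_{2}$ again non-accumulating.

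The induction on $d\ge 2$ then proves: $\mathrm{Hom}(\mathbb{P}^{1},X,d)$ has a unique non-accumulating component $M_{d}$, and $M_{d}=M_{d_{1}}\star M_{d-d_{1}}$ for every $1\le d_{1}\le d-1$. For $d=2$ this holds since $M_{1}\star M_{1}$ is non-accumulating — smoothing a chain of two free lines gives a free curve birational onto a conic — hence $M_{1}\star M_{1}=M_{2}$. For $d\ge 3$, let $M$ be a non-accumulating component; its general member is a free curve of anticanonical degree $2d\ge 5$, so by Movable Bend-and-Break (Theorem \ref{MBB_noE5}, applicable since $X$ is terminal, factorial and has no E5 contraction) the component of $\overline{M}_{0,0}(X)$ associated with $M$ contains a stable map $f\colon C_{1}\cup C_{2}\to X$ with each $f|_{C_{i}}$ a free rational curve of some $H$-degree $d_{i}$, $d_{1}+d_{2}=d$. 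Each $f|_{C_{i}}$ lies in $M_{d_{i}}$ (by the inductive hypothesis if $d_{i}\ge 2$, in $M_{1}$ if $d_{i}=1$), so $M=M_{d_{1}}\star M_{d_{2}}$; and the inductive hypothesis on smaller degrees together with associativity of $\star$ makes $M_{d_{1}}\star M_{d-d_{1}}$ independent of $d_{1}$ — for instance $M_{1}\star M_{d-1}=M_{1}\star(M_{1}\star M_{d-2})=(M_{1}\star M_{1})\star M_{d-2}=M_{2}\star M_{d-2}$, and similarly for the remaining splittings — so $M$ is uniquely determined, completing the induction. The main obstacle is the base case: the irreducibility of the space of conics on a factorial terminal cubic threefold and on a factorial terminal complete intersection of two quadrics, where the singular points wreck the clean dimension count of the smooth case and conics meeting or degenerating at the singular locus must be handled carefully (this is the content of Section 6). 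Two subsidiary points also require attention: the free pieces output by Bend-and-Break must be shown to be neither multiple covers nor contained in a high-$a$ surface — which follows from the classification of accumulating components — and free lines must genuinely form one family, which rests on the irreducibility of the Fano variety of lines of the three relevant del Pezzo threefolds.
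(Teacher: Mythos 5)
Your proposal is correct and follows essentially the same route as the paper: irreducibility of the spaces of $H$-lines and $H$-conics (Theorems \ref{Lines_Cubic}--\ref{Conics_Quartic}) as the base case, Movable Bend-and-Break (Theorem \ref{MBB_noE5}) plus induction on degree to show each $\overline{M}_{0,0}(X,d)$ has exactly the birational component and the multiple-cover component, and Theorems \ref{highA} and \ref{a-covers} to rule out high-$a$ subvarieties and to identify the multiple-cover component as the unique accumulating one via the universal family of lines. The one step you assert without full justification --- that the gluing $M_{d_{1}}\star M_{d_{2}}$ lands in a single well-defined component independent of the choices --- is settled in the paper by showing the general fiber of the evaluation map of $R_{d-1}$ is irreducible, using Proposition \ref{factor through a-cover} together with the fact that every $a$-cover factors rationally through the family of $H$-lines, which is exactly the classification you already have in hand.
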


Geometric Manin's Conjecture is confirmed for smooth del Pezzo threefolds of $H^{3} \ge 3$ in \cite{Lehmann2019}.
Moreover, any factorial del Pezzo threefold with $H^{3} \ge 5$ is smooth by \cite{Prokhorov2013a}*{Lemma 3.3} and \cite{Prokhorov2017}. 
However, there are non-smooth factorial del Pezzo threefolds of $H^{3} = 3, 4$: any cubic threefold with at most $3$ nodes is factorial by \cite{PolizziRapagnettaSabatino2014factorial}*{Theorem 4.4}, and any complete intersection of two quadrics in $\mathbb{P}^{5}$ with a single node is factorial by \cite{Kosta2009factorialCI}*{Theorem 1.1}.
Thus we focus on cubic threefolds and complete intersections of two quadrics in $\mathbb{P}^{5}$ with terminal factorial singularities. 

\subsection*{Outline}
The paper is organized as follows.
In Section 2, we collect preliminary results on free rational curves, Gorenstein terminal threefolds, and $a$, $b$-invariants.
In Section 3, we give a classification of subvarieties $Y$ of a Gorenstein terminal Fano threefold $X$ such that $a(Y, -K_{X}|_{Y}) > 1$.
In Section 4, we give a partial result on the classification of $a$-covers $f\colon Y\rightarrow X$ with respect to the Iitaka dimension $\kappa(Y, K_{Y}-f^{*}K_{X}) \in \{0,1,2\}$.
In Section 5, we prove Movable Bend-and-Break for terminal factorial Fano threefolds without E5 contractions.
In the last section, we state Geometric Manin's Conjecture and give a list of known results on Geometric Manin's Conjecture. 
Then we prove Geometric Manin's Conjecture for singular cubic and quartic del Pezzo threefolds.

\section*{Acknowledgments}
The author would like to thank Sho Tanimoto for suggesting the topic treated in the paper and for his continuous support.
The author would like to thank the anonymous referees for the invaluable comments and suggestions.
\par
The author was partially supported by JST FOREST program Grant number JPMJFR212Z and JSPS Bilateral Joint Research Projects Grant number JPJSBP120219935.

\section{Preliminaries}
\subsection{Free rational curves}
For a projective variety $X$ over $k$, we write the set of numerical classes of $\mathbb{R}$-Cartier divisors (resp. $\mathbb{R}$-$1$-cycles) as $N^{1}(X)$ (resp. $N_{1}(X)$).
These are $\mathbb{R}$-vector spaces of dimension $\rho(X) < \infty$, the Picard number of $X$, and are considered as Euclidean spaces.
Let $\overline{\mathrm{Eff}}^{1}(X)$ be the closed convex cone consisting of the classes of pseudo-effective $\mathbb{R}$-Cartier divisors on $X$, and $\mathrm{Nef}^{1}(X)$ the closed convex cone consisting of the classes of nef $\mathbb{R}$-Cartier divisors on $X$.
Similarly, let $\overline{\mathrm{Eff}}_{1}(X)$ be the closed convex cone consisting of the classes of pseudo-effective $\mathbb{R}$-$1$-cycles on $X$, and $\mathrm{Nef}_{1}(X)$ the closed convex cone consisting of the classes of nef $\mathbb{R}$-$1$-cycles on $X$.
The intersection pairing $N^{1}(X) \times N_{1}(X) \rightarrow \mathbb{R}$ induces the duality of the cones $\overline{\mathrm{Eff}}^{1}(X) = \mathrm{Nef}_{1}(X)^{\vee}$ and $\mathrm{Nef}^{1}(X) = \overline{\mathrm{Eff}}_{1}(X)^{\vee}$.
\par
For an integral curve class $\alpha \in \overline{\mathrm{Eff}}_{1}(X)$, let $\Hom (\mathbb{P}^{1}, X, \alpha)$ be the quasi-projective scheme parametrizing morphisms $f\colon \mathbb{P}^{1}\rightarrow X$ such that $f_{*}\mathbb{P}^{1} = \alpha$.
We often call a member of $\Hom (\mathbb{P}^{1}, X, \alpha)$ a rational curve on $X$ representing $\alpha$.
Set $\mathrm{Hom}(\mathbb{P}^{1}, X) := \bigsqcup_{\alpha \in \overline{\mathrm{Eff}}_{1}(X)} \Hom (\mathbb{P}^{1}, X, \alpha)$.
We define free and very free rational curves on a (not necessarily smooth) projective variety.
\begin{dfn}
Let $X$ be a projective variety.
We say that a rational curve $f\in \mathrm{Hom}(\mathbb{P}^{1}, X)$ is free (resp. very free) if 
\begin{enumerate}[(i)]
\item
the image $f(\mathbb{P}^{1})$ does not meet with the singular locus of $X$, and 
\item 
$H^{1}(\mathbb{P}^{1}, f^{*}T_{X}(-1)) = 0$ (resp. $H^{1}(\mathbb{P}^{1}, f^{*}T_{X}(-2)) = 0$), or equivalently, $f^{*}T_{X}$ (resp. $f^{*}T_{X}(-1))$ is generated by global sections.
\end{enumerate}
\end{dfn}
By \cite{Hartshorne1977}*{V. Exercise 2.6}, any locally free sheaf on $\mathbb{P}^{1}$ can be written as a direct sum of invertible sheaves.
Hence the freeness (resp. very freeness) is equivalent to the fact that $f^{*}T_{X}$ has the form 
\[f^{*}T_{X} = \mathcal{O}(a_{1}) \oplus \dots \oplus \mathcal{O}(a_{\dim(X)}),\]
where $a_{1}, \dots, a_{\dim(X)} \in \mathbb{Z}$ are all non-negative (resp. positive).
\begin{thm}[\cite{Kollar1996}*{II.3.11}]\label{Kollar dom iff free}
Let $X$ be a smooth projective variety and $\alpha \in \overline{\mathrm{Eff}}_{1}(X)_{\mathbb{Z}}$.
Then there is a proper closed subset $V \subset X$ which contains all non-free rational curves representing $\alpha$.
\end{thm}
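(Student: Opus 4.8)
The plan is to let $V$ be (the closure of) the subset of $X$ swept out by all non-free rational curves in class $\alpha$, and to show that any family of such curves whose total image is dense in $X$ must already have a free general member; the latter is the characteristic-zero input.

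First I would fix an ample divisor $H$ on $X$. Since $\alpha$ determines the degree $H\cdot\alpha$, the scheme $\Hom(\mathbb{P}^1,X,\alpha)$ is of finite type over $k$, hence has finitely many irreducible components. Over $\Hom(\mathbb{P}^1,X,\alpha)\times\mathbb{P}^1$ the pullback of $T_X$ along the universal morphism, twisted by $\mathcal{O}_{\mathbb{P}^1}(-1)$, is flat, so $h^1(\mathbb{P}^1,f^*T_X(-1))$ is upper semicontinuous in $f$; as $X$ is smooth, condition (i) in the definition of freeness is automatic, so the non-free locus $Z\subseteq\Hom(\mathbb{P}^1,X,\alpha)$ is closed. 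Write $Z=Z_1\cup\dots\cup Z_m$ with each $Z_i$ irreducible. Because $\mathrm{PGL}_2=\mathrm{Aut}(\mathbb{P}^1)$ is connected and acts on $\Hom(\mathbb{P}^1,X,\alpha)$ by reparametrization — under which the splitting type of $f^*T_X$, hence freeness, is preserved — each $Z_i$ is $\mathrm{PGL}_2$-stable. Setting $V:=\bigcup_i\overline{\mathrm{ev}(Z_i\times\mathbb{P}^1)}$, where $\mathrm{ev}$ is the evaluation morphism, the image of every non-free curve in class $\alpha$ lies in $V$, so it suffices to prove that $\mathrm{ev}|_{Z_i\times\mathbb{P}^1}$ is not dominant for any $i$.

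Assume $\mathrm{ev}|_{Z_i\times\mathbb{P}^1}$ is dominant for some $i$. Since $Z_i\times\mathbb{P}^1$ is irreducible and we work in characteristic zero, generic smoothness gives a dense open set of pairs $(f,p)$ at which $d(\mathrm{ev})$ is surjective onto $T_{f(p)}X$. On $T_{(f,p)}(Z_i\times\mathbb{P}^1)=T_fZ_i\oplus T_p\mathbb{P}^1$, with $T_fZ_i\subseteq T_f\Hom(\mathbb{P}^1,X,\alpha)=H^0(\mathbb{P}^1,f^*T_X)$, the differential $d(\mathrm{ev})$ restricts on the first summand to the evaluation map $\mathrm{ev}_p\colon H^0(\mathbb{P}^1,f^*T_X)\to (f^*T_X)\otimes k(p)\cong T_{f(p)}X$ and on the second summand to $df_p$. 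The crucial point — which I expect to be the main obstacle — is that the $T_p\mathbb{P}^1$-direction contributes nothing beyond $T_fZ_i$: the $\mathrm{PGL}_2$-invariance of $Z_i$ shows that $T_fZ_i$ contains the image of the infinitesimal orbit map $H^0(\mathbb{P}^1,T_{\mathbb{P}^1})\to H^0(\mathbb{P}^1,f^*T_X)$, $v\mapsto df\circ v$, and evaluating these sections at $p$ yields exactly $df_p(T_p\mathbb{P}^1)$ because $T_{\mathbb{P}^1}\cong\mathcal{O}_{\mathbb{P}^1}(2)$ is globally generated. Hence for general $(f,p)$ we get $\mathrm{ev}_p(T_fZ_i)=T_{f(p)}X$, and in particular $\mathrm{ev}_p\colon H^0(\mathbb{P}^1,f^*T_X)\to T_{f(p)}X$ is surjective.

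Finally I would fix a general $f\in Z_i$; then $\mathrm{ev}_p$ is surjective for general $p\in\mathbb{P}^1$. Decomposing $f^*T_X\cong\bigoplus_{j=1}^{\dim X}\mathcal{O}_{\mathbb{P}^1}(a_j)$ and using that $H^0(\mathbb{P}^1,\mathcal{O}_{\mathbb{P}^1}(a))\to k(p)$ is surjective precisely when $a\ge 0$, surjectivity of $\mathrm{ev}_p$ forces $a_j\ge 0$ for all $j$; thus $f^*T_X$ is globally generated and $f$ is free, contradicting $f\in Z_i\subseteq Z$. Therefore no $\mathrm{ev}|_{Z_i\times\mathbb{P}^1}$ is dominant, each $\overline{\mathrm{ev}(Z_i\times\mathbb{P}^1)}$ is a proper closed subset of $X$, and their finite union $V$ is the required proper closed subset.
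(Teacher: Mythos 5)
Your proof is correct. The paper does not prove this statement---it is quoted directly from \cite{Kollar1996}*{II.3.11}---and your argument is essentially the standard one behind that reference: the non-free locus of $\Hom(\mathbb{P}^{1},X,\alpha)$ is closed by semicontinuity, and generic smoothness in characteristic zero shows the evaluation map cannot be dominant on any of its components, since surjectivity of $d(\mathrm{ev})$ at a general point would force $f^{*}T_{X}$ to be globally generated. Your use of $\mathrm{PGL}_{2}$-invariance of each component to absorb the $T_{p}\mathbb{P}^{1}$-direction into $\mathrm{ev}_{p}(T_{f}Z_{i})$ is a clean (and valid) way to handle the one delicate point, where Koll\'ar instead argues with the subsheaf of $f^{*}T_{X}$ generated by $T_{f}V$ together with the image of $T_{\mathbb{P}^{1}}$.
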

\par
For a projective variety and an integral curve class $\alpha \in \overline{\mathrm{Eff}}^{1}(X)$, let $\overline{M}_{0, r}(X, \alpha)$ be the Kontsevich space parametrizing $r$-pointed stable maps $f\colon C\rightarrow X$ of genus $0$ such that $f_{*}C = \alpha$ (For the definition and construction, see for example \cite{Fulton1997}).
Set $\overline{M}_{0, r}(X) = \bigsqcup_{\alpha \in \overline{\mathrm{Eff}}^{1}(X)} \overline{M}_{0, r}(X, \alpha)$.
Let $\overline{\mathrm{Rat}}_{r}(X, \alpha)$ (resp. $\overline{\mathrm{Free}}_{r}(X, \alpha)$) be the union of components of $\overline{M}_{0, r}(X, \alpha)$ generically parametrizing stable maps with irreducible domains (resp. free stable maps).
When $r = 0$, we simply write them as $\overline{\mathrm{Rat}}(X, \alpha)$ and $\overline{\mathrm{Free}}(X, \alpha)$.

\subsection{Gorenstein terminal threefolds}
Let $X$ be a Gorenstein terminal threefold.
By \cite{KollarMori1998}*{Corollary 5.38}, $X$ has only isolated cDV singularities. 
Moreover, it is known that $X$ is local complete intersection (e.g., in the proof of \cite{Lehmann2023}*{Lemma 2.2}).
Hence by deformation theory, we obtain the lower bound for the dimension of $\mathrm{Hom}(\mathbb{P}^{1}, X)$. 
\begin{prop}[\cite{Kollar1996}*{II.1.3}, \cite{Lehmann2023}*{Lemma 2.2}]\label{Homdim_Gor}
Let $X$ be a Gorenstein terminal threefold and let $\alpha \in \overline{\mathrm{Eff}}_{1}(X)$. 
For $[f]\in \Hom (\mathbb{P}^{1}, X, \alpha)$, we have 
\[\dim_{[f]} \Hom (\mathbb{P}^{1}, X, \alpha) \ge -K_{X}\cdot \alpha + \dim X.\]
\end{prop}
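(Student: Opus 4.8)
The plan is to combine the fact that a Gorenstein terminal threefold is a local complete intersection with the deformation theory of morphisms from a fixed $\mathbb{P}^{1}$, and then to evaluate the resulting Euler characteristic by Riemann--Roch on $\mathbb{P}^{1}$. As recalled above, $X$ has only isolated cDV singularities by \cite{KollarMori1998}*{Corollary 5.38}, and these are hypersurface singularities, so $X$ is a local complete intersection; in particular its cotangent complex $L_{X}$ is perfect of amplitude $[-1,0]$ (see the proof of \cite{Lehmann2023}*{Lemma 2.2}). Moreover $L_{X}$ has rank $\dim X$, and $\det L_{X}\cong\omega_{X}$ (the invertible dualizing sheaf), so $c_{1}(L_{X}) = K_{X}$ in $\Pic(X)$.

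Fix $[f]\in\Hom(\mathbb{P}^{1},X,\alpha)$; we may assume $f$ non-constant, since for $\alpha = 0$ one has $\Hom(\mathbb{P}^{1},X,0)\cong X$, which has dimension $\dim X$ at every point. Put $E^{\bullet} := \mathbb{R}\mathcal{H}om_{\mathbb{P}^{1}}(Lf^{*}L_{X},\mathcal{O}_{\mathbb{P}^{1}}) = (Lf^{*}L_{X})^{\vee}$, a perfect complex of amplitude $[0,1]$ on $\mathbb{P}^{1}$, and write $h^{i} := \dim\mathbb{H}^{i}(\mathbb{P}^{1},E^{\bullet})$; since $\mathbb{P}^{1}$ is a curve, $h^{i} = 0$ for $i\notin\{0,1,2\}$. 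By the deformation theory of morphisms with fixed source (\cite{Kollar1996}*{II.1.3}), the tangent space of $\Hom(\mathbb{P}^{1},X,\alpha)$ at $[f]$ is $\mathbb{H}^{0}(\mathbb{P}^{1},E^{\bullet})$ and the obstructions lie in $\mathbb{H}^{1}(\mathbb{P}^{1},E^{\bullet})$, so
\[\dim_{[f]}\Hom(\mathbb{P}^{1},X,\alpha)\ \ge\ h^{0}-h^{1}\ =\ \chi(\mathbb{P}^{1},E^{\bullet})-h^{2}.\]
I would then compute $\chi(\mathbb{P}^{1},E^{\bullet})$ by Riemann--Roch on $\mathbb{P}^{1}$: the complex $E^{\bullet}$ has rank $\dim X$ and $c_{1}(E^{\bullet}) = -f^{*}c_{1}(L_{X}) = -f^{*}K_{X}$, hence $\chi(\mathbb{P}^{1},E^{\bullet}) = \operatorname{rk}E^{\bullet}+\deg E^{\bullet} = \dim X-K_{X}\cdot\alpha$.

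The step I expect to be the crux is the vanishing $h^{2} = 0$, which is exactly what eliminates the correction term $-h^{2}$ and produces the asserted inequality. To prove it I would observe that the top cohomology sheaf $\mathcal{H}^{1}(E^{\bullet})$ is isomorphic to $f^{*}\mathcal{E}xt^{1}(\Omega_{X},\mathcal{O}_{X})$, a coherent sheaf supported on $f^{-1}(\operatorname{Sing}(X))$; since $X$ is a Gorenstein terminal threefold, $\operatorname{Sing}(X)$ is finite, and as $f$ is non-constant the preimage $f^{-1}(\operatorname{Sing}(X))$ is also finite, so $\mathcal{H}^{1}(E^{\bullet})$ is a torsion sheaf on $\mathbb{P}^{1}$ and $H^{1}(\mathbb{P}^{1},\mathcal{H}^{1}(E^{\bullet})) = 0$. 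Because $E^{\bullet}$ has amplitude $[0,1]$, the hypercohomology spectral sequence exhibits $\mathbb{H}^{2}(\mathbb{P}^{1},E^{\bullet})$ as a subquotient of $H^{1}(\mathbb{P}^{1},\mathcal{H}^{1}(E^{\bullet}))$, whence $h^{2} = 0$. It is precisely here that the hypothesis of \emph{isolated} singularities is used, beyond the l.c.i.\ property: over an l.c.i.\ target with positive-dimensional singular locus one could have $f(\mathbb{P}^{1})\subseteq\operatorname{Sing}(X)$ with $\mathcal{H}^{1}(E^{\bullet})$ not torsion and $h^{2}\ne 0$, yielding only the weaker bound $\dim_{[f]}\Hom(\mathbb{P}^{1},X,\alpha)\ge\dim X-K_{X}\cdot\alpha-h^{2}$.
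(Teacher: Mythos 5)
Your argument is correct and is essentially the proof that the paper delegates to \cite{Kollar1996}*{II.1.3} and \cite{Lehmann2023}*{Lemma 2.2}: the paper only records that isolated cDV points make $X$ a local complete intersection and then invokes deformation theory, which is exactly the cotangent-complex computation you carry out (amplitude $[-1,0]$, Riemann--Roch giving $\chi = \dim X - K_X\cdot\alpha$, and vanishing of the top hypercohomology). The only nitpick is that $\mathcal{H}^{1}(E^{\bullet})$ is $f^{*}\mathcal{E}xt^{1}(L_{X},\mathcal{O}_{X})$ rather than $f^{*}\mathcal{E}xt^{1}(\Omega_{X},\mathcal{O}_{X})$, but either way it is supported on the finite set $f^{-1}(\operatorname{Sing}X)$, which is all your argument uses.
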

\begin{lem}[\cite{Lehmann2023}*{Lemma 2.3}]\label{free_iff_dominant}
Let $X$ be a Gorenstein terminal threefold and $M \subset \overline{\mathrm{Rat}}(X)$ be a component.
Then $M \subset \overline{\mathrm{Free}}(X)$ if and only if $M$ is a dominant component, that is, the associated evaluation morphism is dominant.
\end{lem}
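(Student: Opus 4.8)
The plan is to prove the two implications separately; the easy one is that freeness forces dominance, and the real content is the converse. For the easy direction, suppose a general member $f \in M$ is free. Then $f(\mathbb{P}^{1})$ avoids $\mathrm{Sing}(X)$ and $f^{*}T_{X}$ is globally generated, so for every $p \in \mathbb{P}^{1}$ the restriction $H^{0}(\mathbb{P}^{1}, f^{*}T_{X}) \to (f^{*}T_{X})|_{p} = T_{f(p)}X$ is surjective; because $X$ is a local complete intersection, $H^{0}(\mathbb{P}^{1}, f^{*}T_{X})$ consists exactly of the (unobstructed) first-order deformations of $f$, so the evaluation morphism on the universal curve over $M$ is smooth, hence dominant, at $([f], p)$. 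Thus $M$ is dominant. For the converse I will repeatedly use the following fact on a smooth projective variety $Y$: if $S$ is an irreducible family of rational curves on $Y$ of a fixed numerical class with dominant evaluation morphism, then a general member of $S$ is free. This is immediate from \cite{Kollar1996}*{II.3.11}, which provides a proper closed $V \subset Y$ containing the image of every non-free rational curve of that class, since a general member of a dominant family has image not contained in $V$.

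So let $M \subset \overline{\mathrm{Rat}}(X)$ be dominant of numerical class $\alpha$, set $d := -K_{X}\cdot\alpha$, and note that $\dim M \ge d$: the component of $\Hom(\mathbb{P}^{1}, X, \alpha)$ dominating $M$ has dimension $\dim M + 3$ (a non-constant morphism from $\mathbb{P}^{1}$ has finite automorphism group) and at least $d + 3$ by Proposition \ref{Homdim_Gor}. Fix a resolution $\phi\colon \tilde{X}\to X$ with $\phi^{-1}(\mathrm{Sing}(X))$ a divisor $\bigcup_i E_i$; since $X$ is Gorenstein and terminal, $K_{\tilde{X}} = \phi^{*}K_{X} + \sum_i a_i E_i$ with each $a_i \in \mathbb{Z}_{\ge 1}$. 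The crucial claim is that a general member of $M$ avoids $\mathrm{Sing}(X)$. If not, then by finiteness of $\mathrm{Sing}(X)$ (\cite{KollarMori1998}*{Corollary 5.38}) and irreducibility of $M$ a general member passes through a fixed point $x \in \mathrm{Sing}(X)$. Taking proper transforms of general members of $M$ yields an irreducible family $S$ of rational curves on $\tilde{X}$ of a fixed class $\tilde{\alpha}$ with $\phi_{*}\tilde{\alpha} = \alpha$, with $\dim S = \dim M \ge d$ and with dominant evaluation morphism (the images sweep out a dense open subset of $\tilde{X}$). Every member of $S$ meets $\phi^{-1}(x) \subset \bigcup_i E_i$ without being contained in it, so $\sum_i E_i\cdot\tilde{\alpha} \ge 1$ and hence
\[-K_{\tilde{X}}\cdot\tilde{\alpha} \;=\; d - \sum_i a_i\,(E_i\cdot\tilde{\alpha}) \;\le\; d-1.\]
On the other hand, by the fact above the general member of $S$ is a free rational curve, so the component of $\overline{M}_{0,0}(\tilde{X},\tilde{\alpha})$ containing it has dimension exactly $-K_{\tilde{X}}\cdot\tilde{\alpha}$; since $S$ is contained in that component, $\dim S \le -K_{\tilde{X}}\cdot\tilde{\alpha} \le d-1$, contradicting $\dim S \ge d$.

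Therefore a general member $f$ of $M$ avoids $\mathrm{Sing}(X)$. Then $\phi$ is an isomorphism near $f(\mathbb{P}^{1})$, so $f$ lifts to $\tilde{f}\colon\mathbb{P}^{1}\to\tilde{X}$ with $\tilde{f}^{*}T_{\tilde{X}} \cong f^{*}T_{X}$, and these lifts form an irreducible family on $\tilde{X}$ with dominant evaluation morphism; by the fact above, a general such $\tilde{f}$ is free, hence so is $f$. Thus $M \subset \overline{\mathrm{Free}}(X)$. I expect the only real obstacle to be the claim that a general member of a dominant family avoids $\mathrm{Sing}(X)$: the mechanism is that terminality makes all discrepancies $a_i$ positive, so curves forced through a singular point lose anticanonical degree on the resolution and, being there a dominant hence generically free family, cannot be large enough to meet the lower bound $\dim M \ge d$ coming from the local-complete-intersection property of $X$.
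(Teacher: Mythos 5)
Your argument is correct, and it is essentially the same as the proof of the cited result (the paper itself only refers to \cite{Lehmann2023}*{Lemma 2.3} without reproducing the argument): the lower bound $\dim M \ge -K_X\cdot\alpha$ from the lci deformation theory, combined with the fact that Gorenstein terminal discrepancies are $\ge 1$, forces general members of a dominant family to avoid the finitely many singular points, after which freeness follows from the smooth case on a resolution. The only (harmless) imprecision is invoking the lci property to identify $H^0(\mathbb{P}^1,f^*T_X)$ with first-order deformations in the forward direction; since the free curve already avoids $\mathrm{Sing}(X)$, one can simply work in the open subscheme $\Hom(\mathbb{P}^1,X^{\mathrm{sm}})\subset\Hom(\mathbb{P}^1,X)$.
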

For Gorenstein terminal threefolds, $\mathbb{Q}$-factoriality and factoriality are equivalent: 
\begin{lem}[\cite{Kawamata1988}*{Lemma 5.1}]
Let $X$ be a Gorenstein terminal threefold and let $D$ be a Weil divisor.
Then $D$ is $\mathbb{Q}$-Cartier if and only if it is Cartier.
In particular, $\mathbb{Q}$-factorial Gorenstein terminal threefolds are factorial.
\end{lem}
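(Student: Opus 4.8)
\emph{Proof plan.}
The assertion is local and trivial in one direction, so I would reduce at once to the following situation: $X=\Spec R$ with $(R,\mathfrak m)$ the local ring of $X$ at a singular point $P$, and $D$ a Weil divisor whose class $[D]$ has some finite order $r\ge 1$ in the local class group $\operatorname{Cl}(R)$ (equivalently $rD$ is Cartier); the goal is to show $r=1$. This suffices because $D$ is Cartier on all of $X$ as soon as its germ is principal at every point of the finite singular set $\operatorname{Sing}X$ (at a smooth point a Weil divisor is automatically Cartier). Since $X$ is Gorenstein terminal of dimension three, $P\in X$ is an isolated cDV point by \cite{KollarMori1998}*{Corollary 5.38}, hence analytically an isolated hypersurface singularity inside a smooth fourfold.

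The one geometric input I would use is that the punctured germ $U^{\circ}:=\Spec R\setminus\{\mathfrak m\}$ admits no connected finite \'etale cover of degree $>1$; in other words, the local fundamental group of a cDV point is trivial. This follows from Milnor's theorem that the link of an $n$-dimensional isolated hypersurface singularity is $(n-2)$-connected: for $n=3$ the link is simply connected and $U^{\circ}$ is homotopy equivalent to it, while the \'etale fundamental group only sees the profinite completion and one may assume $k=\mathbb C$ by the Lefschetz principle. (Equivalently, one may phrase this cohomologically: excision of the codimension-three point gives $\operatorname{Cl}(R)\cong\Pic(U^{\circ})$, and the Kummer sequence then exhibits $\Pic(U^{\circ})[r]$ as a quotient of $H^{1}_{\mathrm{et}}(U^{\circ},\mu_{r})$, which vanishes once $\pi_{1}(U^{\circ})$ is trivial, so $\operatorname{Cl}(R)$ is torsion-free.) I nevertheless prefer the geometric endgame below.

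Choose a trivialisation $\mathcal O_{X}(rD)\cong\mathcal O_{X}$ and form the associated cyclic cover
\[
\pi\colon Y=\Spec_{X}\Bigl(\bigoplus_{i=0}^{r-1}\mathcal O_{X}(iD)\Bigr)\longrightarrow X .
\]
Then $\pi$ is finite of degree $r$, $Y$ is normal (it is \'etale in codimension one over the normal $X$), and $\pi$ is \'etale exactly over the locus on which $D$ is Cartier; here that locus is $U^{\circ}$, because $P$ is the only singular point and $[D]\ne 0$ in $\operatorname{Cl}(R)$ when $r>1$. Taking $r$ to be the \emph{exact} order of $[D]$ makes $Y$ irreducible, so $\pi$ restricts to a connected finite \'etale cover of $U^{\circ}$ of degree $r$. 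By the triviality of the local fundamental group this forces $r=1$, i.e.\ $D$ is Cartier near $P$; since $P$ was an arbitrary singular point, $D$ is Cartier. The ``in particular'' is then immediate: if $X$ is $\mathbb Q$-factorial, every Weil divisor is $\mathbb Q$-Cartier, hence Cartier, so $X$ is factorial.

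The only real obstacle is the input on $\pi_{1}$ of the punctured cDV germ (equivalently, the torsion-freeness of the local class group of a cDV point); once Reid's description of Gorenstein terminal threefold singularities as isolated hypersurface (cDV) singularities is available, everything else — normality of the cyclic cover, its irreducibility, \'etaleness away from $P$ — is routine.
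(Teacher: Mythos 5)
Your argument is correct. Note, however, that the paper does not prove this lemma at all --- it is quoted directly from \cite{Kawamata1988}*{Lemma 5.1} --- and your proof is essentially the standard argument behind that citation: a Gorenstein terminal threefold point is an isolated cDV (hence hypersurface) singularity, Milnor's connectivity theorem makes the link simply connected, and the index-one cyclic cover of a torsion Weil divisor class would give a nontrivial connected \'etale cover of the punctured germ, forcing the local class group to be torsion-free. So the proposal is a faithful reconstruction of the cited proof rather than a new route.
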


\begin{thm}[\cite{Cutkosky1988}]\label{contractible}
Let $X$ be a terminal factorial threefold and $\phi\colon X\rightarrow X'$ be an elementary divisorial contraction with the exceptional divisor $E$.
Then one of the following holds:
\begin{enumerate}[(E1)]
\item $\phi$ is the blow up along a local complete intersection curve $C$, and $\phi|_{E} \colon E \rightarrow C$ is a ruled structure whose general fiber is a $-K_{X}$-line.
\item $\phi$ is the blow up at a smooth point of $X'$, and $(E, -K_{X}|_{E}) \cong (\mathbb{P}^{2}, \mathcal{O}(2))$.
\item $\phi$ is the blow up at a point of $X'$ which is analytically isomorphic to $0 \in (x^{2} + y^{2} + z^{2} + w^{2} = 0) \subset \mathbb{A}^{4}$, and $(E, -K_{X}|_{E}) \cong (Q, \mathcal{O}(1))$, where $Q$ is a smooth quadric in $\mathbb{P}^{3}$.
\item $\phi$ is the blow up at a point of $X'$ which is analytically isomorphic to $0 \in (x^{2} + y^{2} + z^{2} + w^{n} = 0) \subset \mathbb{A}^{4}$ for some $n \ge 3$, and $(E, -K_{X}|_{E}) \cong (Q, \mathcal{O}(1))$, where $Q$ is a quadric cone in $\mathbb{P}^{3}$.
\item $\phi$ is the blow up at a point of $X'$ which is analytically isomorphic to $0 \in \mathbb{A}^{3}/\iota$, where $\iota$ is the involution given by $(x, y, z) \mapsto (-x, -y, -z)$, and $(E, -K_{X}|_{E}) \cong (\mathbb{P}^{2}, \mathcal{O}(1))$.
\end{enumerate}
Note that Gorenstein terminal threefolds do not have small contractions.
\end{thm}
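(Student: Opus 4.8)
The plan is to follow the Mori--Cutkosky analysis, regarding $\phi$ as the contraction of a single extremal ray $R$ of the Mori cone $\overline{\mathrm{NE}}(X)$; since $-K_{X}$ is ample, every ray is automatically $K_{X}$-negative, so the cone and contraction theorems for terminal varieties apply. The first reduction is to rule out small contractions: by Mori's local classification of three-dimensional extremal neighborhoods, a flipping curve must pass through a terminal singularity of index at least $2$, which cannot occur because the Gorenstein hypothesis forces every point to have index $1$. Hence $\phi$ is divisorial, its exceptional locus is an irreducible Cartier divisor $E$ (Cartier by factoriality) with $-K_{X}\cdot R>0$, and the Picard number drops by exactly one. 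I would then split the argument according to $\dim\phi(E)\in\{0,1\}$.

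For $\dim\phi(E)=1$ I would apply the fiber-dimension inequality of Ionescu and Wi\'{s}niewski, $\dim F+\dim E\ge \dim X+\ell(R)-1$, where $F$ is a general fiber of $\phi|_{E}$ and $\ell(R):=\min\{-K_{X}\cdot C\}$ is the length of $R$. With $\dim E=2$ and $\dim F=1$ this forces $\ell(R)=1$, so a general fiber is a $-K_{X}$-line, smooth with normal bundle $\mathcal{O}\oplus\mathcal{O}(-1)$ in $X$. This exhibits $\phi|_{E}$ as a ruled structure over $C:=\phi(E)$ and identifies $\phi$ with the blow-up of $X'$ along the local complete intersection curve $C$; this is case (E1).

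The substantive case is $\dim\phi(E)=0$, where $E=\phi^{-1}(p)$ is a projective surface and $H:=-K_{X}|_{E}$ is ample. Writing $K_{X}=\phi^{*}K_{X'}+aE$ with discrepancy $a>0$ and using $\phi^{*}K_{X'}|_{E}=0$, I obtain $-K_{X}|_{E}=-a\,N_{E/X}$ and, by adjunction, $-K_{E}=\tfrac{a+1}{a}H$; in particular $(E,H)$ is a log del Pezzo surface. The inequality now reads $2+2\ge 3+\ell(R)-1$, so $\ell(R)\in\{1,2\}$. When $\ell(R)=2$ the covering curves have anticanonical degree $2$, and the relation $-K_{E}=\tfrac{a+1}{a}H$ forces $-K_{E}$ to be suitably divisible: comparing degrees $(-K_{E})^{2}=(\tfrac{a+1}{a})^{2}H^{2}$ against the list of del Pezzo degrees singles out $(E,H)\cong(\mathbb{P}^{2},\mathcal{O}(2))$ with $a=2$, which is (E2). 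When $\ell(R)=1$, $E$ is covered by $-K_{X}$-lines with $H\cdot\ell=1$; the divisibility of $-K_{E}$ again restricts the degree, so that $a=1$ gives $-K_{E}=2H$ and hence a quadric surface, either the smooth $(\mathbb{P}^{1}\times\mathbb{P}^{1},\mathcal{O}(1,1))$ of (E3) or the quadric cone of (E4), while $a=\tfrac{1}{2}$ gives $-K_{E}=3H$ and forces $(E,H)\cong(\mathbb{P}^{2},\mathcal{O}(1))$, which is (E5).

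It remains to recover the analytic germ $p\in X'$ from the pair $(E,N_{E/X})$, where $N_{E/X}=-\tfrac{1}{a}H$ is $\mathcal{O}(-1)$ on $\mathbb{P}^{2}$ for (E2), $\mathcal{O}(-1,-1)$ on $\mathbb{P}^{1}\times\mathbb{P}^{1}$ for (E3), $\mathcal{O}(-1)$ on the cone for (E4), and $\mathcal{O}(-2)$ on $\mathbb{P}^{2}$ for (E5). This is the main obstacle. The method is to study the formal neighborhood of $E$ through the graded ring $\bigoplus_{n\ge0}\phi_{*}\mathcal{O}_{X}(-nE)$: in cases (E2), (E3), (E5) the obstruction groups $H^{1}(E,\mathrm{Sym}^{n}N_{E/X})$ vanish, so the neighborhood is rigid and the contraction is uniquely the smooth point, the node $x^{2}+y^{2}+z^{2}+w^{2}=0$, and the quotient $\mathbb{A}^{3}/\iota$, respectively. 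For (E4) the surface $E$ is singular and these groups no longer vanish, producing a one-parameter family of germs whose discrete invariant $n\ge 3$ records the length of the deformation and yields exactly the $cA_{n}$ point $x^{2}+y^{2}+z^{2}+w^{n}=0$. Throughout, the terminality of $X'$ and the Gorenstein property (which fails only in the half-discrepancy case (E5)) serve as the final consistency checks on the list.
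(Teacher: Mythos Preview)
The paper does not supply its own proof of this theorem: it is quoted verbatim as a result of Cutkosky \cite{Cutkosky1988} (extending Mori's smooth classification), and the statement is followed immediately by the next subsection with no proof environment. There is therefore nothing in the paper to compare your proposal against; the author is simply importing the classification as a black box.

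That said, your outline tracks the Mori--Cutkosky strategy faithfully: rule out small contractions via the Gorenstein (index~$1$) hypothesis, use Wi\'{s}niewski's inequality to bound the length $\ell(R)$, and then read off $(E,H)$ and the analytic germ from the discrepancy and normal-bundle data. A few of your transitions are more asserted than argued, though. You invoke adjunction $-K_{E}=\tfrac{a+1}{a}H$ before knowing that $E$ is normal, and normality of the exceptional divisor is one of the nontrivial steps in Cutkosky's paper. Your passage from $\ell(R)=2$ directly to $(\mathbb{P}^{2},\mathcal{O}(2))$, and from $\ell(R)=1$ to $a\in\{1,\tfrac{1}{2}\}$, skips the actual numerical work that pins down $(-K_{E})^{2}$, $H^{2}$ and the integrality constraints; Cutkosky does this by computing intersection numbers on $E$ and using that both $K_{X}$ and $E$ are Cartier. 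Likewise, in the $\dim\phi(E)=1$ case you assert that $\phi$ is the blow-up of an l.c.i.\ curve, but this identification (and the fact that $X'$ is again Gorenstein terminal) is precisely the content requiring proof. None of these are wrong ideas, but a self-contained argument would need to fill them in rather than cite the expected outcome.
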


\subsection{$a$, $b$-invariants}
In this subsection, we define two invariants which appear in Geometric Manin's Conjecture.
\begin{dfn}[\cite{Hassett2015}*{Definition 2.2}]
Let $X$ be a $\mathbb{Q}$-factorial canonical projective variety with a nef and big $\mathbb{Q}$-divisor $L$.
We define the $a$-invariant (or the Fujita invariant) by
\[a(X, L) = \inf\{t \in \mathbb{R} \mid K_{X} + tL \in \overline{\mathrm{Eff}}^{1}(X)\}.\]
This is a birational invariant in this category by \cite{Hassett2015}*{Proposition 2.7}.
Hence we define the $a$-invariant for other projective varieties by 
\[a(X, L) := a(\bar{X}, \phi^{*}L), \]
where $\phi\colon \bar{X}\rightarrow X$ is a birational morphism from a $\mathbb{Q}$-factorial canonical projective variety $\bar{X}$ (e.g., one can take a smooth resolution).
\end{dfn}
By \cite{Boucksom2013}, $a(X, L) > 0$ if and only if $X$ is uniruled.
In particular,  for any Fano variety, one sees that $a(X, L)$ is positive.

It is important to compare the a-invariants of subvarieties.
In particular, there is little amount of subvarieties with higher $a$-invariants:
\begin{thm}[cf. \cite{Lehmann2019}*{Theorem 3.3}]\label{clsd}
Let $X$ be a uniruled projective variety with a nef and big $\mathbb{Q}$-Cartier divisor $L$.
Then the closure $V$ of the union of all subvarieties $Y$ satisfying $a(Y, L) > a(X, L)$ is a proper closed subset of $X$.
Moreover, any irreducible component of $V$ not contained in the singular locus also satisfies the same inequality.
\end{thm}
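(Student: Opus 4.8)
I would prove this in two stages: first that $V$ is Zariski closed, and then that $V$ is a proper subset each of whose irreducible components again satisfies the strict inequality. Write $a_{0}=a(X,L)$. Since $a(Y,L|_{Y})$ is defined only when $L|_{Y}$ is nef and big, the subvarieties $Y$ that fail this condition are exactly those contained in the null locus $\mathrm{Null}(L)=\bigcup\{Z:L^{\dim Z}\cdot Z=0\}$, a fixed proper closed subset of $X$ by Nakamaye's theorem; I would set it aside and handle it separately, so that henceforth every subvariety $Y$ under consideration has $L|_{Y}$ nef and big, hence $a(Y,L|_{Y})$ finite with $a(Y,L|_{Y})\le\dim Y+1$, and $Y$ uniruled (by \cite{Boucksom2013}, since $a(Y,L|_{Y})>a_{0}>0$); in particular every component of $V$ is uniruled.

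The crucial --- and only genuinely hard --- input for closedness is a boundedness statement: the subvarieties $\{Y\subset X:a(Y,L|_{Y})\ge a_{0}\}$ form a bounded family. The plan is to deduce this from the bound $a(Y,L|_{Y})\le\dim Y+1$ and the principle that a subvariety whose Fujita invariant is bounded below by a fixed positive constant must be of bounded degree, which reduces the claim to the boundedness of the (log-)Fano-type pairs that arise, i.e.\ to Birkar's theorem on boundedness of Fano varieties together with the structure theory of the minimal model program; this is the route of \cite{Lehmann2019} and its references. Granting it, there are finitely many quasi-projective parameter schemes $p_{i}\colon\mathcal{Y}_{i}\to W_{i}$ with evaluation morphisms $q_{i}\colon\mathcal{Y}_{i}\to X$ such that every $Y$ contributing to $V$ occurs as a fibre of some $p_{i}$. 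Now $w\mapsto a(\mathcal{Y}_{i,w},L|_{\mathcal{Y}_{i,w}})$ is a constructible function on $W_{i}$ which can only increase under specialization --- both facts following from the effectivity of adjoint bundles in families and the behaviour of pseudoeffective cones under degeneration, ultimately from the minimal model program --- so $W_{i}^{>a_{0}}:=\{w\in W_{i}:a(\mathcal{Y}_{i,w},L|_{\mathcal{Y}_{i,w}})>a_{0}\}$ is Zariski closed in $W_{i}$, and hence $q_{i}\bigl(p_{i}^{-1}(W_{i}^{>a_{0}})\bigr)$ is closed in $X$. Since $V$ is the union of these finitely many closed sets (together with the part inside $\mathrm{Null}(L)$), it is closed.

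For the remaining assertions I would use the monotonicity of the $a$-invariant under covering families: if a projective variety $Z$ is dominated by an algebraic family of subvarieties $\{Y_{t}\}$ on whose members $L$ is big, then $a(Z,L|_{Z})\ge a(Y_{t},L|_{Y_{t}})$ for general $t$, which is proved by an adjunction computation on a resolution of the total space of the family (see \cite{Lehmann2019}). Let $Z$ be an irreducible component of $V$, or $Z=X$ if $V=X$. A general point of $Z$ lies on some $Y$ with $a(Y,L|_{Y})>a_{0}$, and since $Y$ is irreducible it is contained in $Z$; by the boundedness above, only finitely many families of such $Y$'s contained in $Z$ occur, and as their images cover a dense subset of $Z$, one of these families dominates $Z$. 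Applying the monotonicity to it gives $a(Z,L|_{Z})\ge a(Y_{t},L|_{Y_{t}})>a_{0}$ for general $t$. With $Z=X$ this contradicts $a(X,L)=a_{0}$, so $V\neq X$; with $Z$ an arbitrary component of $V$ this is precisely the last assertion of the theorem. In summary, the decisive obstacle is the boundedness of the family of subvarieties of Fujita invariant at least $a_{0}$; once that is in hand, the covering-family monotonicity, the constructibility and upper semicontinuity of the $a$-invariant in a bounded family, and the finite bookkeeping are comparatively routine.
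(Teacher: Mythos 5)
Your argument is essentially a reconstruction of the proof of the cited result \cite{Lehmann2019}*{Theorem 3.3} itself: closedness via the boundedness of the family of subvarieties with Fujita invariant bounded below (ultimately Birkar) together with constructibility and specialization behaviour of the $a$-invariant in bounded families, and the properness and ``moreover'' statements via monotonicity of the $a$-invariant under covering families. That route is sound in outline (your handling of the null locus, the reduction to finitely many dominating families, and the generic-point argument for a component $Z$ of $V$ are all correct), but the heavy inputs are exactly the ones you black-box, so in effect you re-derive the theorem you are allowed to quote. The paper does something much lighter and different: it takes the smooth case as known, chooses a resolution $\phi\colon \tilde{X}\rightarrow X$, observes that by birational invariance of the $a$-invariant the preimage $\phi^{-1}(V)$ is contained in the union of the analogous locus for $(\tilde{X},\phi^{*}L)$ and the $\phi$-exceptional locus, and concludes closedness and properness downstairs. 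The paper's reduction buys a two-line proof whose only new content is the passage from smooth to singular $X$ (which is the only point where the stated theorem goes beyond the citation); your approach buys a self-contained treatment that handles singular $X$ uniformly without the resolution step, at the cost of re-engaging with boundedness and semicontinuity. If you keep your route, you should still make the black-boxed steps precise citations rather than plans, and note explicitly that the ``moreover'' clause for a component $Z$ uses that the general point of $Z$ avoids the other components of $V$, as you do.
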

\begin{proof}
    Take a smooth resolution $\phi\colon \tilde{X}\rightarrow X$. 
    Let $Y\subset X$ be a subvariety not contained in the singular locus $\mathrm{Sing}(X)$ such that $a(Y, L)>a(X,L)$. 
    Then the strict transform $\tilde{Y}$ satisfies $a(\tilde{Y}, \phi^{*}L) = a(Y, L) > a(X, L) = a(\tilde{X}, \phi^{*}L)$. 
    Conversely, if $\tilde{Y} \subset \tilde{X}$ is a subvariety not contained in the exceptional locus $\mathrm{Exc}(\phi)$ such that $a(\tilde{Y}, \phi^{*}L) > a(\tilde{X}, \phi^{*}L)$, then the image $Y$ also satisfies $a(Y, L) > a(X, L)$. 
    By \cite{Lehmann2019}*{Theorem 3.3}, the union $\tilde{V}$ of all subvarieties with $a(\tilde{Y}, \phi^{*}L) > a(\tilde{X}, \phi^{*}L)$ is a proper closed subset of $\tilde{X}$, and each irreducible component $\tilde{V}_{i}$ of $\tilde{V}$ also satisfies $a(\tilde{V}_{i}, \phi^{*}L) > a(\tilde{X}, \phi^{*}L)$. 
    Thus we see that $V\subset \phi(\tilde{V}) \cup \mathrm{Sing}(X) \subsetneq X$ and that each component $V_{i}$ of $V$ not contained in $\mathrm{Sing}(X)$ is the birational image of some component $\tilde{V}_{i}$ of $\tilde{V}$, hence satisfies $a(V_{i}, L) > a(X, L)$. 
\end{proof}

\begin{dfn}[\cite{Hassett2015}*{Definition 2.8}]
Let $X$ be a $\mathbb{Q}$-factorial terminal uniruled projective variety with a nef and big $\mathbb{Q}$-divisor $L$.
Set 
\[F(X, L) = \{\alpha \in \mathrm{Nef}_{1}(X) \mid (K_{X} + a(X, L)L)\cdot \alpha = 0\}.\]
Then we define the $b$-invariant by 
\[b(X, L) = \dim \langle F(X, L)\rangle,\]
where $\langle - \rangle$ denotes the linear span.
This is a birational invariant in this category by \cite{Hassett2015}*{Proposition 2.10}.
Hence we define the $b$-invariant for other uniruled projective varieties by 
\[b(X, L) := b(\bar{X}, \phi^{*}L), \]
where $\phi \colon \bar{X}\rightarrow X$ is a birational morphism from a $\mathbb{Q}$-factorial terminal uniruled projective variety $\bar{X}$ (e.g., one can take a smooth resolution).
\end{dfn}

\begin{rem}
    For the special case $L = -K_{X}$, we have $a(X, -K_{X}) = 1$, $F(X, -K_{X}) = \mathrm{Nef}_{1}(X)$, and $b(X, -K_{X}) = \rho(X)$.
\end{rem}

\begin{lem}[\cite{Sengupta2021}, Lemma 2.4]\label{a-inv_dominant_finite}
    Let $X$ be a projective variety with a nef and big $\mathbb{Q}$-Cartier divisor $L$.
    Let $f\colon Y\rightarrow X$ be a dominant generically finite morphism from a projective variety $Y$.
    Then we have $a(Y, f^{*}L) \le a(X, L)$.
\end{lem}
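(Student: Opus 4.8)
The plan is to reduce the statement to the case of a generically finite surjective morphism between \emph{smooth} projective varieties, and then to combine the ramification formula with the fact that the pullback of a pseudo-effective class under a surjective morphism stays pseudo-effective.

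First I would fix a resolution $\phi_{X}\colon \tilde{X}\to X$. Choosing a resolution of $Y$ and resolving the indeterminacy of the induced rational map to $\tilde{X}$, one obtains a resolution $\phi_{Y}\colon \tilde{Y}\to Y$ together with a morphism $g\colon \tilde{Y}\to \tilde{X}$ with $\phi_{X}\circ g = f\circ \phi_{Y}$; since $f$ is dominant and generically finite, so is $g$. Set $M := \phi_{X}^{*}L$. As $\phi_{X}$ is birational and $L$ is nef and big, $M$ is nef and big on $\tilde{X}$, and $g^{*}M = \phi_{Y}^{*}f^{*}L$ is nef and big on $\tilde{Y}$ because $g$ is surjective and generically finite. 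By definition $a(X,L) = a(\tilde{X},M)$ and $a(Y,f^{*}L) = a(\tilde{Y},g^{*}M)$, so it suffices to show $a(\tilde{Y},g^{*}M)\le a(\tilde{X},M)$.

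Write $t_{0} := a(\tilde{X},M)$. Since $M$ is big, $K_{\tilde{X}}+tM$ is big, hence pseudo-effective, for $t\gg 0$, so the set $\{t\in\mathbb{R}\mid K_{\tilde{X}}+tM\in\overline{\mathrm{Eff}}^{1}(\tilde{X})\}$ is nonempty; it is closed because $\overline{\mathrm{Eff}}^{1}(\tilde{X})$ is closed and $t\mapsto K_{\tilde{X}}+tM$ is continuous, and it is upward closed because $M$ is pseudo-effective. Hence the infimum is attained, i.e.\ $K_{\tilde{X}}+t_{0}M$ is pseudo-effective. Now the ramification formula gives $K_{\tilde{Y}} = g^{*}K_{\tilde{X}}+R$ with $R$ an effective divisor, so
\[K_{\tilde{Y}}+t_{0}\,g^{*}M = g^{*}\bigl(K_{\tilde{X}}+t_{0}M\bigr)+R.\]
Writing $K_{\tilde{X}}+t_{0}M$ as a limit of effective $\mathbb{Q}$-divisor classes in $N^{1}(\tilde{X})$ and pulling back by the surjective morphism $g$ shows $g^{*}(K_{\tilde{X}}+t_{0}M)$ is pseudo-effective, and adding the effective $R$ keeps the right-hand side pseudo-effective. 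Thus $K_{\tilde{Y}}+t_{0}\,g^{*}M$ is pseudo-effective, whence $a(\tilde{Y},g^{*}M)\le t_{0}$, and therefore $a(Y,f^{*}L)\le a(X,L)$.

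The only delicate point is the reduction step itself: the formula $K_{\tilde{Y}} = g^{*}K_{\tilde{X}}+R$ with $R\ge 0$ genuinely requires both source and target to be smooth (working directly with singular $X$, $Y$ the discrepancy part need not be effective), which forces the passage to resolutions and the verification that $L$ remains nef and big under the relevant pullbacks. Once everything has been moved to smooth models, the conclusion is a one-line consequence of the ramification formula together with the closedness and functoriality of the pseudo-effective cone.
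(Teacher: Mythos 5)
Your proof is correct. The paper itself gives no argument for this lemma --- it simply cites \cite{Sengupta2021} --- and the reduction you carry out (pass to smooth models compatible with $f$, note that the infimum defining $a(\tilde X,M)$ is attained by closedness of $\overline{\mathrm{Eff}}^{1}$, then apply the ramification formula $K_{\tilde Y}=g^{*}K_{\tilde X}+R$ with $R\ge 0$ together with the fact that $g^{*}$ preserves pseudo-effectivity) is exactly the standard proof of this statement in the literature. Your closing remark correctly identifies the one point that forces the passage to resolutions, namely that effectivity of the relative canonical divisor needs smooth (or at least canonical) source and target.
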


\begin{dfn}\label{Def_covers}
    Let $X$ be a projective variety with a nef and big $\mathbb{Q}$-Cartier divisor $L$.
    Let $f\colon Y\rightarrow X$ be a morphism from a projective variety $Y$.
    \begin{enumerate}[(1)]
        \item We say that $f$ is a thin morphism if $f$ is a generically finite morphism onto its image but not birational to $X$.
        \item We say that $f$ is an $a$-cover if $f$ is a dominant thin morphism such that $a(Y, f^{*}L) = a(X, L)$.
        \item Suppose furthermore that $X$ and $Y$ are $\mathbb{Q}$-factorial terminal uniruled projective varieties.
        Then we say that $f$ is a face contracting morphism if $f$ is an $a$-cover and the induced map $f_{*}\colon F(Y, f^{*}L)\rightarrow F(X,L)$ is not injective.
    \end{enumerate}
\end{dfn}

\section{Subvarieties with higher $a$-invariants}
\begin{prop}\label{NonDom-highA}
Let $X$ be a Gorenstein terminal Fano threefold. 
Take an irreducible component $M\subset \mathrm{Hom}(\mathbb{P}^{1}, X, \alpha)$, where $\alpha \in \overline{\mathrm{Eff}}_{1}(X)_{\mathbb{Z}}$.
Let $\mathcal{U} \rightarrow M$ be the universal family with the evaluation morphism $s\colon \mathcal{U} \rightarrow X$.
If the closure of the image $Y = \overline{s(\mathcal{U})}$ is a proper closed subvariety of $X$, then we have $a(Y, -K_{X}) > 1$.
\end{prop}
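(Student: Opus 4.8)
The plan is to exhibit on $Y$ (or rather on a resolution of $Y$) enough rational curves to force its $a$-invariant to exceed $1$, using the deformation-theoretic lower bound of Proposition~\ref{Homdim_Gor}. First I would pick a general point $y \in Y$ lying in the smooth locus of $X$ and a general member $[f] \in M$ whose image passes through $y$; since $M$ dominates $Y$, such curves sweep out $Y$, so the family of deformations of $f$ through $y$ (inside $M$) has dimension at least $\dim M - \dim Y + 1$. On the other hand, by Proposition~\ref{Homdim_Gor} applied to $X$ we have $\dim M \ge -K_X \cdot \alpha + 3$, and the subscheme of $M$ parametrizing curves through the fixed point $y$ has dimension at least $-K_X \cdot \alpha + 1$. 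Bounding the dimension of this locus from above by a count on $Y$ will produce the inequality $-K_X\cdot\alpha + 1 \le (\text{something involving } a(Y,-K_X))$.

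More precisely, I would set $d := (-K_X)\cdot \alpha = (-K_X|_Y)\cdot\alpha$ and work on a smooth resolution $\phi\colon \tilde Y \to Y$, lifting a general member of $M$ to a free-ish rational curve $\tilde f \colon \mathbb{P}^1 \to \tilde Y$ (free away from the indeterminacy/exceptional locus, which a general curve avoids since a general $[f]\in M$ avoids the finitely many singular points of $X$ and we may assume it is not contained in the non-free locus on $\tilde Y$). The space $\mathrm{Hom}(\mathbb{P}^1,\tilde Y,\tilde\alpha)$ near $[\tilde f]$ has dimension at most $-K_{\tilde Y}\cdot\tilde\alpha + \dim Y = -K_{\tilde Y}\cdot\tilde\alpha + 2$ (upper bound coming from $H^1$ vanishing of the free curve), and fixing one point cuts this down to $-K_{\tilde Y}\cdot\tilde\alpha + 1$. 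Comparing with the lower bound $-K_X\cdot\alpha+1 = \phi^*(-K_X)\cdot\tilde\alpha + 1$ coming from $M$, and observing that deformations in $M$ through $y$ inject (generically) into deformations of $\tilde f$ in $\tilde Y$ through a preimage of $y$, I get
\[
\phi^*(-K_X)\cdot\tilde\alpha + 1 \le -K_{\tilde Y}\cdot\tilde\alpha + 1,
\]
i.e. $(K_{\tilde Y} + \phi^*(-K_X))\cdot\tilde\alpha \le 0$. Since the curves $\tilde\alpha$ move in a family covering $\tilde Y$ (so their classes span, or at least move enough), this says $K_{\tilde Y} - \phi^* K_X$ is not in the interior one would need; combined with the fact that $\phi^*(-K_X)$ is nef and big on $\tilde Y$ and $\tilde\alpha$ has positive intersection with it, a genuinely \emph{strict} inequality should appear because $\dim M$ strictly exceeds the deformation count one would have if $Y$ were all of $X$ — the key point being that $\dim M \ge d + 3$ while curves in a surface through a point move in dimension at most $d' + 1$ where $d' = (\phi^*(-K_X))\cdot\tilde\alpha$, and $d' = d$.

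Unwinding, the strict gap gives $(K_{\tilde Y} + \phi^*(-K_X))\cdot\tilde\alpha < 0$, and since the $\tilde\alpha$-curves cover $\tilde Y$, a standard argument (e.g. via the cone theorem or Miyaoka--Mori, or directly: a covering family of curves on which $K_{\tilde Y} + \phi^*(-K_X)$ is negative forces $K_{\tilde Y} + \phi^*(-K_X)$ to be non-pseudo-effective) shows $K_{\tilde Y} + \phi^*(-K_X) \notin \overline{\mathrm{Eff}}^1(\tilde Y)$, hence $a(Y,-K_X) = a(\tilde Y, \phi^*(-K_X)) > 1$ by definition of the $a$-invariant. The main obstacle I anticipate is bookkeeping the passage from $M$ (curves on $X$ landing in $Y$) to a well-behaved family of curves on the resolution $\tilde Y$: one must check that a general member of $M$ avoids $\mathrm{Sing}(X)$ (true since $Y\not\subset\mathrm{Sing}(X)$, as $\dim Y \ge 1$ and $\mathrm{Sing}(X)$ is finite) so its strict transform is an honest curve on $\tilde Y$ of the expected class, and that the dimension count survives — in particular that $M$ does not consist entirely of multiple covers or curves contracted under $\phi^{-1}$, which again follows from $M$ dominating $Y$ and the general member being reduced and irreducible with image a curve.
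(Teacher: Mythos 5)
Your proposal is correct and follows essentially the same route as the paper: compare the lower bound $\dim M \ge -K_{X}\cdot\alpha + \dim X$ from Proposition~\ref{Homdim_Gor} with the exact dimension $-K_{\tilde{Y}}\cdot\beta + \dim\tilde{Y}$ of the family of free strict transforms on a resolution $\tilde{Y}$, and use $\dim\tilde{Y} < \dim X$ to conclude $(K_{\tilde{Y}}-\phi^{*}K_{X})\cdot\beta < 0$ against a nef covering curve class, hence non-pseudo-effectivity of the adjoint divisor. The one caution is that your point-fixing variant by itself only yields the non-strict inequality $(K_{\tilde{Y}}-\phi^{*}K_{X})\cdot\beta \le 0$ (imposing passage through a point costs $2$ on the threefold but only $1$ on the surface), so the strictness must come from comparing the full families, $-K_{X}\cdot\alpha + 3 \le \dim M \le \dim\tilde{M} = -K_{\tilde{Y}}\cdot\beta + 2$, as you also indicate.
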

\begin{proof} 
    One can run the same argument as in \cite{Lehmann2019}*{Proposition 4.2}.
    We give a proof for completeness.
    Take a smooth resolution $\phi\colon \tilde{Y}\rightarrow Y$.
    Let $\tilde{M} \subset \mathrm{Hom}(\mathbb{P}^{1}, \tilde{Y}, \beta)$ be an irreducible component containing strict transforms of general members of $M$, where $\beta \in N_{1}(\tilde{Y})$ satisfies $\phi_{*}\beta = \alpha$. 
    By Proposition \ref{Homdim_Gor}, the dimension of $M$ is at least $-K_{X}\cdot \alpha + \dim X$.
    On the other hand, since $\tilde{M}$ generically parametrizes free morphisms to $\tilde{Y}$ by Theorem \ref{Kollar dom iff free}, one has $\dim 
    \tilde{M} = -K_{\tilde{Y}}\cdot \beta + \dim \tilde{Y}$.
    Hence we obtain that $(K_{\tilde{Y}} - \phi^{*}K_{X})\cdot \beta = K_{\tilde{Y}}\cdot \beta -K_{X}\cdot \alpha \le \dim \tilde{Y} - \dim X < 0$.
    This means that $K_{\tilde{Y}} - \phi^{*}K_{X} \notin \overline{\mathrm{Eff}}^{1}(\tilde{Y})$, and $a(Y, -K_{X}) > 1$.
\end{proof}

Let $X$ be a Gorenstein terminal Fano threefold.
We classify subvarieties of $X$ with higher $a$-invariants.
Note that the dimension of each component of $\overline{\mathrm{Rat}}(X)$ is positive, hence the closed subset $V \subset X$ as in Theorem \ref{clsd} is purely $2$-dimensional.
\newtheorem*{MainThm1}{\rm\bf Theorem \ref{highA}}
Let us recall the statement of Theorem \ref{highA}:
\begin{MainThm1}
Let $X$ be a Gorenstein terminal Fano threefold.
Let $Y \subset X$ be a $2$-dimensional subvariety of $X$  satisfying $a(Y, -K_{X}) > 1$ and let $\phi \colon \tilde{Y} \rightarrow Y$ be a smooth resolution.
\begin{enumerate}[(1)]
\item 
If $\kappa (\tilde{Y}, K_{\tilde{Y}} - a(Y, -K_{X})\phi^{*}K_{X}) = 1$, then $Y$ is covered by a family of $-K_{X}$-lines.
\item 
If $\kappa (\tilde{Y}, K_{\tilde{Y}} - a(Y, -K_{X})\phi^{*}K_{X}) = 0$ and $Y$ is normal, then $(Y, -K_{X}|_{Y})$ is isomorphic to either $(\mathbb{P}^{2}, \mathcal{O}(1))$, $(Q, \mathcal{O}(1))$, or $(\mathbb{P}^{2}, \mathcal{O}(2))$, where $Q \subset \mathbb{P}^{3}$ is a smooth quadric or a quadric cone.
Moreover, if $Y$ is a Cartier divisor, then $Y$ is an E2, E3, E4, E5 divisor, or E1 divisor of the form $(\mathbb{P}^{1} \times \mathbb{P}^{1}, \mathcal{O}(1,1))$.
\item 
If $\kappa (\tilde{Y}, K_{\tilde{Y}} - a(Y, -K_{X})\phi^{*}K_{X}) = 0$ and $Y$ is non-normal, then the normalization of $Y$ is isomorphic to either $(\mathbb{P}^{2}, \mathcal{O}(1))$, $(Q, \mathcal{O}(1))$, or $(\mathbb{P}^{2}, \mathcal{O}(2))$, where $Q \subset \mathbb{P}^{3}$ is a smooth quadric or a quadric cone.
If furthermore $-K_{X}$ is very ample and $Y$ is a Cartier divisor, then one of the following holds:
\begin{enumerate}[(a)]
\item 
There is a morphism $\pi \colon X \rightarrow \mathbb{P}^{1}$ whose general fiber is a del Pezzo surface of degree $4$ such that $Y = \pi^{-1}(q)$ for some point $q \in \mathbb{P}^{1}$, 
\item 
There is a birational map $\rho \colon X \dashrightarrow X'$ to a quartic hypersurface in $\mathbb{P}^{4}$ such that $\rho|_{Y}$ is an isomorphism and $\rho(Y)$ is a hyperplane section of $X'$. 
\end{enumerate}
\end{enumerate}
\end{MainThm1}
\begin{rem}
\begin{enumerate}[(i)]
\item 
When surfaces $Y_{1}, Y_{2}$ as in (2) are not Cartier, then $Y_{1}$ and $Y_{2}$ are not necessarily disjoint.
See Example \ref{nonCartier}.
\item
\cite{Beheshti2022}*{Theorem 4.1} shows that any smooth Fano threefold does not contain any non-normal surfaces as in (3a) and (3b).
We give examples of surfaces of each type in Example \ref{typeI} and \ref{typeII} below.
\end{enumerate}
\end{rem}
\begin{proof}[Proof of Theorem \ref{highA}]
We mainly follow the argument as in the proof of \cite{Beheshti2022}*{Theorem 4.1}. 
Suppose that $\kappa (\tilde{Y}, K_{\tilde{Y}} - a(Y, -K_{X})\phi^{*}K_{X}) = 1$ and let $\psi \colon \tilde{Y} \rightarrow B$ be the Iitaka fibration.
Then the general fiber is a curve $C$ with $a(C, -\phi^{*}K_{X}) > a(X, -K_{X}) = 1$.
Hence we have
\[1 < a(C, -\phi^{*}K_{X}) = \frac{\deg{K_{C}}}{\phi^{*}K_{X}\cdot C}, \]
which yields that $C$ is rational and $-K_{X}\cdot \phi_{*}C = 1$, as required.
\par
Suppose that $\kappa (\tilde{Y}, K_{\tilde{Y}} - a(Y, -K_{X})\phi^{*}K_{X}) = 0$.
We first consider the case when $Y$ is normal.
By \cite{Lehmann2021a}*{Lemma 5.3}, $Y$ has at worst canonical singularities and $K_{Y} \sim_{\mathbb{Q}} a(Y, -K_{X})K_{X}|_{Y}$. 
Then by \cite{Hoering2010}*{Proposition 1.3}, $(Y, -K_{X})$ is isomorphic to either $(\mathbb{P}^{2}, \mathcal{O}(1))$ or $(\mathbb{P}^{2}, \mathcal{O}(2))$ or $(Q, \mathcal{O}(1))$, where $Q$ is a smooth quadric or a quadric cone in $\mathbb{P}^{3}$.
We assume furthermore that $Y$ is Cartier. 
Since $(K_{X} + Y)|_{Y} \sim_{\mathbb{Q}} a(Y, -K_{X})K_{X}|_{Y}$, we see that $Y|_{Y} \sim_{\mathbb{Q}} (a(Y, -K_{X}) - 1)K_{X}|_{Y}$ is antiample.
Hence $Y$ is a contractible divisor.
Then by Theorem \ref{contractible}, $Y$ has type E5 if $(Y, -K_{X}) \cong (\mathbb{P}^{2}, \mathcal{O}(1))$, type E2 if $(Y, -K_{X}) \cong (\mathbb{P}^{2}, \mathcal{O}(2))$, and type E1, E3, E4 if $(Y, -K_{X}) \cong (Q, \mathcal{O}(1))$.
\par
Suppose that $Y$ is non-normal.
Let $\nu \colon \bar{Y} \rightarrow Y$ be the normalization.
The same argument as above shows that $(\bar{Y}, -\nu^{*}K_{X})$ is isomorphic to either $(\mathbb{P}^{2}, \mathcal{O}(1))$ or $(\mathbb{P}^{2}, \mathcal{O}(2))$ or $(Q, \mathcal{O}(1))$.
From now on, we assume that $-K_{X}$ is very ample and $Y$ is Cartier.
Since $-K_{X}$ is very ample, $\nu$ must be a morphism defined by a strict sublinear system of $|-\nu^{*}K_{X}|$.
Hence the possibilities are $(\bar{Y}, -\nu^{*}K_{X}) \cong (\mathbb{P}^{2}, \mathcal{O}(2))$ and the sublinear system $V \subset |\mathcal{O}(2)|$ has dimension $3$ or $4$.
\begin{enumerate}[(a)]
\item 
If $\dim V = 4$, then $Y$ is isomorphic to a complete intersection of two quadrics in $\mathbb{P}^{4}$ which is singular along a line.
In particular, $K_{Y}$ is antiample and $K_{Y}^{2} = 4$.
Thus $\mathcal{O}_{\mathbb{P}^{2}}(2) \sim -\nu^{*}K_{Y} \sim -\nu^{*}(K_{X} + Y)|_{Y}$, and hence $Y|_{Y}$ is trivial.
Moreover, there is an exact sequence
\[0 \rightarrow H^{0}(X, \mathcal{O}_{X}) \rightarrow H^{0}(X, \mathcal{O}_{X}(Y)) \rightarrow H^{0}(Y, \mathcal{O}_{Y}) \rightarrow 0\]
of global sections by Kawamata-Viehweg vanishing theorem.
Hence $h^{0}(X, \mathcal{O}_{X}(Y)) = 2$ and $|Y|$ defines a morphism $\pi \colon X \rightarrow \mathbb{P}^{1}$ such that $\pi^{-1}(q) \cong Y$ for some point $q \in \mathbb{P}^{1}$.
\item 
If $\dim V = 3$, then $Y$ is isomorphic to a quartic surface in $\mathbb{P}^{3}$.
By the adjunction formula, $K_{Y}$ is trivial, hence $Y|_{Y} \sim -K_{X}|_{Y}$ is very ample.
Then we have $h^{0}(\mathcal{O}_{Y}(Y)) = 4$ and there is an exact sequence 
\[0 \rightarrow H^{0}(X, \mathcal{O}_{X}) \rightarrow H^{0}(X, \mathcal{O}_{X}(Y)) \rightarrow H^{0}(Y, \mathcal{O}_{Y}(Y)) \rightarrow 0\]
of global sections by Kawamata-Viehweg vanishing theorem.
Hence $h^{0}(X, \mathcal{O}_{X}(Y)) = 5$ and $|Y|$ defines a morphism $\rho \colon X \rightarrow \mathbb{P}^{4}$.
Since $Y^{3} = (Y|_{Y})^{2} = (-K_{X}|_{Y})^{2} =4$, we  see that $\rho$ is a birational map to a quartic threefold.
Since $Y|_{Y}$ is very ample, the restriction $\rho|_{Y}$ is an isomorphism, as required.
\end{enumerate}
\end{proof}

\begin{ex}\label{nonCartier}
Let $f\colon X\rightarrow \mathbb{P}^{3}$ be a double cover ramified along a sextic $B \in |\mathcal{O}_{\mathbb{P}^{3}}(6)|$.
Moreover we take $B$ of the form 
\[B = V(x_{0}F + G^{2}),\]
where $F, G \in k[x_{0}, \dots, x_{3}]$ are homogeneous polynomials of degree $5$ and $3$ respectively.
Then $X$ is defined by $y^{2} = x_{0}F + G^{2}$, where $y$ is a  variable of weight $3$.
When $F$ and $G$ are general, $B$ has only $A_{1}$ singularities, hence $X$ has only $cA_{1}$ singularities.
Now we take a section by a hyperplane $H= V(x_{0})$.
Then we obtain two divisors 
\[D_{1} = V(x_{0}, y-G), \quad D_{2} = V(x_{0}, y+G)\]
which satisfy $(D_{i}, -K_{X}) \cong (\mathbb{P}^{2}, \mathcal{O}(1))$ and $D_{1} \cap D_{2} \neq \emptyset$. 
If these are $\mathbb{Q}$-Cartier, then these are contractible divisors by Theorem \ref{highA}.(2). 
Then $D_{1}$ and $D_{2}$ must be disjoint, a contradiction. 
Thus these are not $\mathbb{Q}$-Cartier.
\end{ex}
\begin{ex}\label{typeI}
Consider the morphism $\phi \colon \mathbb{P}^{2} \rightarrow \mathbb{P}^{4}$ defined by 
\[(x:y:z)\mapsto (z^{2}:-xy:-y^{2}+yz:x^{2}:xy-xz).\]
The image $S$ is a complete intersection of two quadrics
\[S = V(X_{1}X_{4}-X_{2}X_{3},(X_{1}+X_{4})^{2}-X_{0}X_{3}) \subset \mathbb{P}^{4}.\]
Moreover, $S$ is singular along a line $\ell = V(X_{1},X_{3},X_{4})$.
Then we define a complete intersection 
\[\hat{X} := V(X_{1}X_{4}-X_{2}X_{3}+X_{0}X_{5},(X_{1}+X_{4})^{2}-X_{0}X_{3}+X_{5}^{2}) \subset \mathbb{P}^{5},\]
which is a Gorenstein terminal Fano threefold.
Indeed, $\hat{X}$ has a singularity of type $\mathit{cA}_{3}$ at $p = (0:0:1:0:0:0)$.
On the other hand, by construction, the hyperplane section $\hat{X} \cap V(X_{5})$ is isomorphic to $S$.
Let $V \subset |\mathcal{O}_{\hat{X}}(1)|$ be the pencil spanned by $X_{2}$ and $X_{5}$.
Blowing up $\hat{X}$ along the base locus of $V$, we get a Gorenstein terminal Fano threefold $X$.
Then $X$ admits a fibration $\pi \colon X \rightarrow \mathbb{P}^{1}$. 
Moreover, there is a fiber of $\pi$ isomorphic to $S$, as required.
\end{ex}
\begin{ex}\label{typeII}
Consider the morphism $\phi \colon \mathbb{P}^{2} \rightarrow \mathbb{P}^{3}$ defined by 
\[(x:y:z)\mapsto (xy:yz:zx:x^{2}+y^{2}+z^{2}).\]
The image $S$ is a quartic surface
\[S = V(X_{0}^{2}X_{1}^{2}+X_{1}^{2}X_{2}^{2}+X_{2}^{2}X_{0}^{2}-X_{0}X_{1}X_{2}X_{3}) \subset \mathbb{P}^{3}.\]
This surface is so-called a Steiner surface.
$S$ is singular along three lines $\ell_{0} = V(X_{1},X_{2}), \ell_{1} = V(X_{2},X_{0}), \ell_{2} = V(X_{0},X_{1})$.
Then we define a quartic threefold
\[X := V(X_{0}^{2}X_{1}^{2}+X_{1}^{2}X_{2}^{2}+X_{2}^{2}X_{0}^{2}-X_{0}X_{1}X_{2}X_{3}+X_{3}^{3}X_{4}+X_{3}X_{4}^{3}+X_{4}^{4}) \subset \mathbb{P}^{4},\]
which is a Gorenstein terminal Fano threefold.
Indeed, $X$ has singularities of type $\mathit{cA}_{3}$ at $p_{0} = (1:0:0:0:0), p_{1} = (0:1:0:0:0), p_{2} = (0:0:1:0:0)$.
On the other hand, by construction, the hyperplane section $X \cap V(X_{4})$ is isomorphic to $S$.
Therefore, $X$ contains a non-normal surface $S$ with $a(S,-K_{X}) = \frac{3}{2} > a(X,-K_{X})$.
\end{ex}

\begin{cor}\label{nondom}
Let $X$ be a Gorenstein terminal Fano threefold.
Let $M \subset \overline{\mathrm{Rat}}(X)$ be a non-dominant component parametrizing stable maps of anticanonical degree $d\le 2$ and let $Z$ be the image of the evaluation map $\mathrm{ev}\colon M'\rightarrow X$, where $M' \subset \overline{\mathrm{Rat}}_{1}(X)$ is the corresponding component.
\begin{enumerate}[(1)]
\item If $d = 1$ and the dimension of $M$ is greater than expected, that is, $\dim M > 1$, then $(Z, -K_{X}|_{Z})$ is either a contractible divisor of type E5 or a non-$\mathbb{Q}$-Cartier divisor isomorphic to $(\mathbb{P}^{2}, \mathcal{O}(1))$. 
\item If $d= 2$ and the dimension of $M$ is greater than expected, that is, $\dim M > 2$, then $Z$ is a surface listed in Theorem \ref{highA}, (1) and (2), except for a contractible divisor of type E2 and a non-$\mathbb{Q}$-Cartier divisor isomorphic to $(\mathbb{P}^{2}, \mathcal{O}(2))$.
\end{enumerate}
\end{cor}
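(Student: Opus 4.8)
The plan is to reduce the statement to Theorem~\ref{highA} together with one dimension estimate on a resolution. Write $\alpha$ for the class carried by the members of $M$, so $-K_X\cdot\alpha=d\le 2$, and let $Z=\overline{\mathrm{ev}(M')}$ be the image; since $M$ is non-dominant, $Z$ is a proper closed subvariety of $X$. The component of $\mathrm{Hom}(\mathbb{P}^{1},X,\alpha)$ corresponding to $M$ sweeps out the same $Z$, so Proposition~\ref{NonDom-highA} gives $a(Z,-K_X)>1$. I first rule out the case in which $Z$ is a curve: any curve $C$ with $a(C,-K_X)>1$ is a $-K_X$-line, since $a(C,-K_X)=(2-2g(C))/(-K_X\cdot C)>1$ forces $g(C)=0$ and $-K_X\cdot C=1$; but then the general member of $M$ is a degree-$d$ cover $\mathbb{P}^{1}\to C$, so $\dim M\le 2d-2$, which is $\le d$ for $d\le 2$, contradicting $\dim M>d$. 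Hence $\dim Z=2$, Theorem~\ref{highA} applies, and it remains to use $\dim M>d$ to select which of the surfaces on its list can occur.

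For the extra constraint, take a smooth resolution $\phi\colon\tilde{Z}\to Z$ and let $\tilde{M}\subset\overline{\mathrm{Rat}}(\tilde{Z},\beta)$ be a dominant component containing the strict transforms of the members of $M$; here $\phi_{*}\beta=\alpha$, so $-\phi^{*}K_X\cdot\beta=d$. As $\tilde{Z}$ is a smooth surface and $\tilde{M}$ dominates it, its general member is a free rational curve, so $\dim\tilde{M}=-K_{\tilde{Z}}\cdot\beta-1$; and since the strict-transform map is injective, $\dim\tilde{M}\ge\dim M\ge d+1$. Put $a:=a(Z,-K_X)=a(\tilde{Z},-\phi^{*}K_X)$ and $E:=K_{\tilde{Z}}-a\,\phi^{*}K_X$, which lies in $\overline{\mathrm{Eff}}^{1}(\tilde{Z})$ by definition of the $a$-invariant; because $\beta$ is a movable, hence nef, curve class on the surface $\tilde{Z}$, we have $E\cdot\beta\ge 0$, so
\[-K_{\tilde{Z}}\cdot\beta=-a\,\phi^{*}K_X\cdot\beta-E\cdot\beta=ad-E\cdot\beta\le ad.\]
Comparing this with $-K_{\tilde{Z}}\cdot\beta-1\ge d+1$ yields $ad\ge d+2$, i.e. $a(Z,-K_X)\ge 1+\tfrac{2}{d}$; so $a(Z,-K_X)\ge 3$ when $d=1$ and $a(Z,-K_X)\ge 2$ when $d=2$.

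To finish, I would match this bound with the list of Theorem~\ref{highA}. One computes $a(\mathbb{P}^{2},\mathcal{O}(1))=3$, $a(Q,\mathcal{O}(1))=2$ for $Q$ a smooth quadric or a quadric cone, and $a(\mathbb{P}^{2},\mathcal{O}(2))=3/2$; the non-normal surfaces of part~(3) have $a$-invariant equal to that of their normalization $(\mathbb{P}^{2},\mathcal{O}(2))$, namely $3/2$; and in the case $\kappa(\tilde{Z},E)=1$ of part~(1), the argument in the proof of Theorem~\ref{highA} shows that the general fibre of the Iitaka fibration is a $-K_X$-line and that $a(Z,-K_X)=2$. When $d=1$, the bound $a(Z,-K_X)\ge 3$ leaves only $(\mathbb{P}^{2},\mathcal{O}(1))$ (in particular the case $\kappa=1$ does not occur), and the supplementary statement in Theorem~\ref{highA}(2) identifies $Z$ as an E5 divisor if it is Cartier and as a non-$\mathbb{Q}$-Cartier divisor otherwise, which is (1). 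When $d=2$, the bound $a(Z,-K_X)\ge 2$ eliminates exactly the surfaces of $a$-invariant $3/2$, namely the E2 divisor, the non-$\mathbb{Q}$-Cartier surface isomorphic to $(\mathbb{P}^{2},\mathcal{O}(2))$, and all non-normal surfaces of part~(3); the remaining possibilities are precisely the surfaces of Theorem~\ref{highA}(1) together with $(\mathbb{P}^{2},\mathcal{O}(1))$ and $(Q,\mathcal{O}(1))$ of part~(2), which is (2).

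The step requiring the most care is the dimension equality $\dim\tilde{M}=-K_{\tilde{Z}}\cdot\beta-1$: one must use that a dominant component of the space of rational curves on a smooth surface generically parametrizes free (hence unobstructed) curves, and that replacing the members of $M$ by their strict transforms on $\tilde{Z}$ does not drop the dimension of the family. A secondary but essential point is that excluding $\kappa=1$ when $d=1$ depends on the equality $a(Z,-K_X)=2$, which must be extracted from the proof of Theorem~\ref{highA}(1) rather than from its statement; once these are in place the rest is the bookkeeping of $a$-invariants carried out above.
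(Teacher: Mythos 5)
Your argument is correct, and it follows the same skeleton as the paper's proof (reduce via Proposition \ref{NonDom-highA} to a surface $Z$ with $a(Z,-K_X)>1$, then invoke Theorem \ref{highA}); but the paper disposes of the crucial selection step with the single sentence ``the claim follows directly from Theorem \ref{highA},'' whereas you supply an actual mechanism for it. Your mechanism is the inequality $a(Z,-K_X)\ge 1+\tfrac{2}{d}$, obtained by comparing $\dim M\ge d+1$ with the expected dimension $-K_{\tilde Z}\cdot\beta-1$ of a dominant family of free curves on a resolution $\tilde Z$ and using that $K_{\tilde Z}-a\,\phi^{*}K_X$ is pseudo-effective against the nef class $\beta$; matching this against the $a$-invariants $3$, $2$, $3/2$ of the surfaces in Theorem \ref{highA} then yields exactly the exclusions in the corollary. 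This is cleaner and more systematic than the implicit alternative (counting, surface by surface, how large a family of $-K_X$-lines or conics each candidate can carry), and it buys a uniform treatment of the normal, non-normal, and $\kappa=1$ cases at once. Two points you correctly flag deserve emphasis: the identity $a(Z,-K_X)=2$ in the $\kappa=1$ case rests on the standard fact that the $a$-invariant is computed on a general fiber of the Iitaka fibration of $K_{\tilde Z}-a\,\phi^{*}K_X$ (the same fact the paper uses implicitly in proving Theorem \ref{highA}(1)), and the equality $\dim\tilde M=-K_{\tilde Z}\cdot\beta-1$ uses generic smoothness in characteristic $0$ to see that a dominant family on the smooth surface $\tilde Z$ is generically free and hence unobstructed. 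Both are standard, so I see no gap; your preliminary exclusion of the case $\dim Z=1$ via the bound $\dim M\le 2d-2\le d$ is also a detail the paper omits (it only remarks that the locus $V$ of Theorem \ref{clsd} is purely $2$-dimensional) and is welcome.
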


\begin{proof}
    If the evaluation map is dominant, then $M$ has the expected dimension by Lemma \ref{free_iff_dominant}.
    Hence $Z$ is a $2$-dimensional subvariety of $X$ satisfying $a(Z, -K_{X}) > 1$ by Proposition \ref{NonDom-highA}.
    % Then the claim follows directly from Theorem \ref{highA}.  
    Then one can check the claims for each surface listed in Theorem \ref{highA}: for instance, if the normalization of $(Z, -K_{X}|_{Z})$ is isomorphic to $(\mathbb{P}^{2}, \mathcal{O}(2))$, then the space $M$ of $-K_{X}$-conics contained in $Z$ is birational to the Grassmannian $\mathbb{G}(1,2)$ of lines on $\mathbb{P}^{2}$. 
    This implies that contractible divisors of type E2, non-$\mathbb{Q}$-Cartier divisors isomorphic to $(\mathbb{P}^{2},\mathcal{O}(2))$, and surfaces listed in (3) of Theorem \ref{highA} are excluded from the case (2) of the claim. 
    The argument for (1) is similar. 
\end{proof}

\section{$a$-covers}
Let $X$ be a Gorenstein terminal Fano threefold and let $f\colon Y\rightarrow X$ be a dominant generically finite morphism from a smooth threefold $Y$ of degree at least $2$.
Note that for such a morphism, we have $a(Y, -f^{*}K_{X})\le a(X, -K_{X})$ by Lemma \ref{a-inv_dominant_finite}.
Recall that when $a(Y, -f^{*}K_{X}) = a(X, -K_{X})$, we say that $f$ is an $a$-cover .

\begin{prop}\label{factor through a-cover}
    Let $X$ be a Gorenstein terminal Fano threefold. 
    Let $M \subset \overline{\mathrm{Rat}}(X)$ be a component with the evaluation map $s\colon M'\rightarrow X$.
    Suppose that $s$ is dominant but its general fiber is not irreducible.
    Take a smooth resolution $\tilde{M}'\rightarrow M'$ and consider the composition $\tilde{s}\colon \tilde{M}'\rightarrow X$.
    Then the Stein factorization $f\colon Y\rightarrow X$ of $\tilde{s}$ is an $a$-cover.
\end{prop}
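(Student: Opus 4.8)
\emph{Proof proposal.} The plan is to verify the two defining properties of an $a$-cover from Definition~\ref{Def_covers}: that $f$ is a dominant thin morphism, and that $a(Y,-f^{*}K_{X}) = a(X,-K_{X}) = 1$. The first is easy: since $s$ is dominant, so is $\tilde{s}$, hence so is $f$; and $f$ is finite, being the finite part of a Stein factorisation of the projective morphism $\tilde{s}$. By the hypothesis on the general fibre of $s$ (which, for a general base point, is not connected) the Stein factorisation is nontrivial, i.e.\ $\deg f\ge 2$, so $f$ is a dominant thin morphism. Since Lemma~\ref{a-inv_dominant_finite} already gives $a(Y,-f^{*}K_{X})\le a(X,-K_{X})=1$, the entire content is the reverse inequality $a(Y,-f^{*}K_{X})\ge 1$, which I would deduce from a dimension count for a family of rational curves on $Y$ pulled back from the family $M$ on $X$.

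First I would record that $M$, being a dominant component, lies in $\overline{\mathrm{Free}}(X)$ by Lemma~\ref{free_iff_dominant}; since $\dim X=3$, a general member of $M$ is a free stable map with smooth irreducible domain, so $\dim M=-K_{X}\cdot\alpha=:d$ and $\dim M'=\dim\tilde{M}'=d+1$ (and $d\ge 2$ because $\tilde{s}$ dominates a threefold). Write $\tilde{s}=f\circ g$, where $g\colon\tilde{M}'\to Y$ has connected fibres; then $g$ is dominant. For a general $[C]\in M$ let $F_{[C]}\subset\tilde{M}'$ be the fibre over $[C]$ of the composite $\tilde{M}'\to M'\to M$; for general $[C]$ it is a rational curve whose image under $\tilde{s}$ is the curve $C\subset X$. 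Because $\tilde{s}$ factors through $g$, the image $C':=g(F_{[C]})\subset Y$ is a rational curve with $f(C')=C$ and $C'\to C$ of degree $1$. As $[C]$ varies these curves cover $Y$ (as $g$ is dominant), and $[C]\mapsto C'$ is generically injective (one recovers $C$ as $f(C')$), so they form a covering family $\mathcal{F}$ of rational curves on $Y$ with $\dim\mathcal{F}\ge\dim M=d$, of a class $\beta$ with $f_{*}\beta=\alpha$; hence $-f^{*}K_{X}\cdot\beta=-K_{X}\cdot\alpha=d$.

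Next I would pass to a smooth resolution $\psi\colon\bar{Y}\to Y$, using that $a(Y,-f^{*}K_{X})=a(\bar{Y},-(f\psi)^{*}K_{X})$ by birational invariance, with $-(f\psi)^{*}K_{X}$ nef and big (pullback of an ample divisor along a surjective generically finite morphism). Strict transforms of members of $\mathcal{F}$ form a covering family $\bar{\mathcal{F}}$ of rational curves on $\bar{Y}$ of a class $\bar{\beta}$ with $-(f\psi)^{*}K_{X}\cdot\bar{\beta}=d$ and $\dim\bar{\mathcal{F}}\ge d$. A general member $\bar{C}\in\bar{\mathcal{F}}$ is free: otherwise, by \cite{Kollar1996}*{II.3.11}, every member would have image in a fixed proper closed subset of $\bar{Y}$, contradicting that $\bar{\mathcal{F}}$ covers $\bar{Y}$. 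For such $\bar{C}$, with irreducible domain, the space $\overline{M}_{0,0}(\bar{Y},\bar{\beta})$ is smooth at $[\bar{C}]$ of dimension $-K_{\bar{Y}}\cdot\bar{\beta}+\dim\bar{Y}-3=-K_{\bar{Y}}\cdot\bar{\beta}$, so $d\le\dim\bar{\mathcal{F}}\le-K_{\bar{Y}}\cdot\bar{\beta}$. Finally, $\bar{\beta}$ is a movable curve class, hence pairs non-negatively with the pseudoeffective divisor $K_{\bar{Y}}+a(\bar{Y},-(f\psi)^{*}K_{X})\cdot(-(f\psi)^{*}K_{X})$; combined with $-(f\psi)^{*}K_{X}\cdot\bar{\beta}=d$ this forces $a(\bar{Y},-(f\psi)^{*}K_{X})\ge(-K_{\bar{Y}}\cdot\bar{\beta})/d\ge 1$. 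Together with Lemma~\ref{a-inv_dominant_finite} this gives $a(Y,-f^{*}K_{X})=1=a(X,-K_{X})$, so $f$ is an $a$-cover.

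The step I expect to be the main obstacle is the construction of $\mathcal{F}$ with the correct numerical class: one must verify that the lift $g(F_{[C]})$ maps to $C$ with degree exactly one (so that $f_{*}\beta=\alpha$), which is precisely where the factorisation of $\tilde{s}$ through its Stein factorisation is essential, and then transport $\beta$ correctly across the resolution $\bar{Y}$; a little additional care is needed if a general member of $M$ is a multiple cover of its image, where one works instead with the primitive class. Once $\mathcal{F}$ is in hand with $-f^{*}K_{X}\cdot\beta=\dim\mathcal{F}$, the dimension estimate and the passage to $a\ge 1$ are routine.
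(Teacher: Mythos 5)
Your proposal is correct and takes essentially the same route as the paper's proof (which runs the argument of \cite{Lehmann2019}*{Proposition 5.15}): lift the universal family of curves through the Stein factorization to $Y$, pass to a smooth resolution, use that the lifted family is dominant hence generically free to bound its dimension by $-K_{\bar{Y}}\cdot\bar{\beta}$, and conclude $(K_{\bar{Y}}-(f\circ\psi)^{*}K_{X})\cdot\bar{\beta}\le 0$, whence $a(Y,-f^{*}K_{X})=1$. The only cosmetic differences are that the paper tracks stable maps rather than image cycles (which cleanly disposes of the multiple-cover injectivity point you flag at the end) and that you spell out the dominant-thin verification the paper leaves implicit.
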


\begin{proof}
    One can run the argument as in \cite{Lehmann2019}*{Proposition 5.15}.
    Since $\tilde{s}$ factors through $f$, general stable maps $(g\colon \mathbb{P}^{1}\rightarrow X)\in M$ induce stable maps $(h\colon \mathbb{P}^{1}\rightarrow Y) \in \overline{\mathrm{Rat}}(Y)$. 
    Take a smooth resolution $\tilde{Y}\rightarrow Y$ and let $\tilde{N} \subset \overline{\mathrm{Rat}}(\tilde{Y})$ be the component containing  strict transforms $\tilde{h}$ of $h$.
    Since strict transforms $\tilde{h}$ dominate $\tilde{Y}$, we have $\tilde{N}\in \overline{\mathrm{Free}}(\tilde{Y})$ by Lemma \ref{free_iff_dominant}, and strict transforms $\tilde{h}$ are indeed general free maps parametrized by $\tilde{N}$.
    Hence $\dim \tilde{N} = -K_{\tilde{Y}}\cdot \tilde{h}_{*}\mathbb{P}^{1}$.
    On the other hand, $M \in \overline{\mathrm{Free}}(X)$ again by Lemma \ref{free_iff_dominant}. 
    Thus $\dim M = -K_{X}\cdot g_{*}\mathbb{P}^{1}$. 
    Since $\dim M = \dim \tilde{N}$, we obtain that $(K_{\tilde{Y}} - f^{*}K_{X})\cdot \tilde{h}_{*}\mathbb{P}^{1} = 0$. 
    This equality implies that $K_{\tilde{Y}} - f^{*}K_{X}$ lies on the boundary of $\overline{\mathrm{Eff}}_{1}(\tilde{Y})$ since $\tilde{h}_{\mathbb{P}^{1}}$ is a movable curve class, in particular, a nef curve class on $\tilde{Y}$. 
    Therefore, we conclude that $a(Y, -f^{*}K_{X}) = 1$. 
\end{proof}

We will study $a$-covers based on the Iitaka dimension $\kappa \in \{0,1,2\}$ of $K_{Y} - f^{*}K_{X}$.
\subsection{Iitaka dimension $2$ case}
\begin{lem}\label{conicbdl}
Let $X$ be a Gorenstein terminal Fano threefold and let $f\colon Y\rightarrow X$ be an $a$-cover with $Y$ smooth. 
Suppose that the Iitaka fibration $\psi \colon Y\rightarrow Z$ for $K_{Y}-f^{*}K_{X}$ is a morphism and $\dim Z = 2$.
Then there exist a component $M \subset \overline{\mathrm{Rat}}(X)$ parametrizing $-K_{X}$-conics and the corresponding component $M' \subset \overline{\mathrm{Rat}}_{1}(X)$ such that $\psi$ is birationally a base change of the forgetful morphism $\rho \colon M'\rightarrow M$ and $f$ factors through the evaluation map $\mathrm{ev}\colon M'\rightarrow X$ as rational maps.
\end{lem}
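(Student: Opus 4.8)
The plan is to read off the structure of a general fibre of $\psi$, recognise the restriction of $f$ to such a fibre as a $-K_{X}$-conic, organise these conics into a family classified by a single component $M\subset\overline{\mathrm{Rat}}(X)$, and finally identify $Y$ birationally with the base change of the universal family over $M$ along the classifying map.

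First I would analyse a general fibre. Put $D:=K_{Y}-f^{*}K_{X}$, so that $\psi$ is the Iitaka fibration of $D$ and $\kappa(Y,D)=\dim Z=2$. For general $z\in Z$ the fibre $C:=\psi^{-1}(z)$ is a smooth irreducible curve (generic smoothness, together with connectedness of the fibres of an Iitaka fibration), and $K_{Y}|_{C}=K_{C}$ since $\psi^{*}K_{Z}|_{C}$ is trivial and $\omega_{Y/Z}|_{C}=\omega_{C}$. Restricting $D$ to the generic fibre yields a divisor of Iitaka dimension $0$ on a curve, so $\deg(D|_{C})=0$, that is,
\[\deg K_{C}=K_{Y}\cdot C=f^{*}K_{X}\cdot C=K_{X}\cdot f_{*}C.\]
If $f$ contracted the general fibre it would factor through $Z$, which is impossible since $f$ is generically finite onto the threefold $X$; hence $f_{*}C\neq 0$, and as $-K_{X}$ is ample Cartier we get $K_{X}\cdot f_{*}C\le -1$. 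Thus $\deg K_{C}\le -1$, forcing $C\cong\mathbb{P}^{1}$, $\deg K_{C}=-2$, and $-K_{X}\cdot f_{*}C=2$; so $f|_{C}\colon\mathbb{P}^{1}\to X$ is a $-K_{X}$-conic.

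Next, over a dense open $Z^{\circ}\subset Z$ the stable maps $f|_{C_{z}}$ form a family, hence define a morphism $Z^{\circ}\to\overline{M}_{0,0}(X)$; let $M$ be the component of $\overline{M}_{0,0}(X)$ containing its image. Its general member has irreducible domain, so $M\subset\overline{\mathrm{Rat}}(X)$ and $M$ generically parametrizes $-K_{X}$-conics; and since $\psi$ and $f$ are dominant, the conics $f(C_{z})$ sweep out $X$, so $M$ is a dominant component. By Lemma \ref{free_iff_dominant}, $M\subset\overline{\mathrm{Free}}(X)$, and combining this with Proposition \ref{Homdim_Gor} (after quotienting by $\mathrm{PGL}_{2}$) gives $\dim M=-K_{X}\cdot f_{*}C+\dim X-3=2$. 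Because distinct fibres of $\psi$ are disjoint, while $f$ is finite of some degree $d$, at most $d$ of them can map onto a fixed curve of $X$; so the classifying rational map $g\colon Z\dashrightarrow M$ has finite fibres, and comparing dimensions ($\dim Z=2=\dim M$) it is dominant and generically finite.

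Finally I would set up the identification. Let $M'\subset\overline{\mathrm{Rat}}_{1}(X)$ be the corresponding component, $\rho\colon M'\to M$ the forgetful morphism (generically a $\mathbb{P}^{1}$-fibration) and $\mathrm{ev}\colon M'\to X$ the evaluation map. Sending a general $y\in Y$, lying on $C_{\psi(y)}$, to the pointed stable map $\bigl(f|_{C_{\psi(y)}},\,y\in C_{\psi(y)}\bigr)$ defines a rational map $h\colon Y\dashrightarrow M'$ with $\rho\circ h=g\circ\psi$ and $\mathrm{ev}\circ h=f$; this is legitimate precisely because for general $z$ the fibre $C_{z}$ is the reduced irreducible domain of $f|_{C_{z}}$. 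Hence $(\psi,h)$ induces a rational map $Y\dashrightarrow Z\times_{M}M'$ over $Z$ which on a general fibre $C_{z}$ is the identification of $C_{z}$ with $\rho^{-1}(g(z))$, so it is birational onto the component of $Z\times_{M}M'$ dominating $Z$. This exhibits $\psi$ as a birational base change of $\rho$, and the relation $f=\mathrm{ev}\circ h$ shows that $f$ factors through $\mathrm{ev}$. The step I expect to be the main obstacle is exactly this last identification: one must verify that $g$ is generically finite — which is where the hypothesis that $f$ is \emph{generically finite}, rather than merely dominant, is genuinely used, bounding how many fibres of $\psi$ can land on a single curve — and that the comparison map $Y\dashrightarrow Z\times_{M}M'$ is honestly birational, i.e. that a general fibre of $\psi$ maps isomorphically (not just finitely) onto the corresponding fibre of $\rho$, which requires controlling the automorphisms of the general member of $M$.
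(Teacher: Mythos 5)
Your overall strategy is the same as the paper's: restrict to a general fibre of $\psi$, compute that it is a rational curve whose pushforward has anticanonical degree $2$, organise these into a component $M$, and identify $Y$ birationally with the base change of the universal family over $M$. The degree computation and the construction of the classifying map $Z\dashrightarrow M$ are fine. But there is a genuine gap exactly at the point you flag as ``the main obstacle'' and then leave unresolved: you never show that $f|_{C_z}$ is \emph{birational onto its image}. From $-K_{X}\cdot f_{*}C_{z}=2$ alone, $f|_{C_z}$ could be a degree-$2$ cover of a $-K_{X}$-line. In that case the component $M$ you construct would generically parametrize double covers of lines rather than conics, its general member would have a nontrivial automorphism group, and the map from $C_{z}$ to the fibre $\rho^{-1}(g(z))$ would be $2{:}1$ rather than an isomorphism --- so both conclusions of the lemma (that $M$ parametrizes $-K_{X}$-conics and that $\psi$ is birationally a base change of $\rho$) would fail as stated. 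This is not a deferrable technicality; it is the step that makes the final identification work.

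The paper closes this gap in one line. If $f|_{C_z}$ were generically a double cover of a $-K_{X}$-line, the image lines would sweep out a dense subset of $X$ (since $f$ and $\psi$ are dominant), so $-K_{X}$-lines would form a dominant family; but by Lemma \ref{free_iff_dominant} a dominant component of $\overline{\mathrm{Rat}}(X)$ generically parametrizes free curves, and a free curve has anticanonical degree at least $2$. Hence $f|_{C_z}$ is birational onto a $-K_{X}$-conic. Once you have this, the general member of $M$ has trivial automorphisms, an open subset $V\subset M$ embeds into the Hilbert scheme of conics and $\rho^{-1}(V)$ into its universal family, and the universal property of the Hilbert scheme produces the Cartesian diagram directly --- a cleaner route to the birational base-change statement than your pointwise comparison map, and one that disposes of the automorphism worry you raise at the end.
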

\begin{proof}
    Although the argument in the proof of \cite{Beheshti2022}*{Lemma 5.2} is valid for our case, we give a proof for completeness.
    Since $\dim Z = 2$, the general fiber of $\psi$ is a curve $D$ such that $a(D, -f_{*}K_{X}) = a(Y, -f^{*}K_{X}) = 1$.
    Then we have 
    \[\frac{\deg K_{D}}{f^{*}K_{X}\cdot D} = 1,\]
    which yields that $D$ is rational and $-K_{X}\cdot f_{*}D = -f^{*}K_{X}\cdot D = 2$.
    Since $-K_{X}$-lines cannot form a dominant family of rational curves on $X$, the restriction $f|_{D}\colon D\rightarrow C \subset X$ is birational onto a $-K_{X}$-conic $C$. 
    This correspondence defines a rational map $Z\dashrightarrow M$, where $M\subset \overline{\mathrm{Rat}}(X)$ is a component generically parametrizing birational stable maps onto $-K_{X}$-conics. 
    % Then the rest of the statements follow from the universal property of the Hilbert scheme of $-K_{X}$-conics on $X$. 
    Let $U\subset Z$ be an open subscheme where any fiber of $\psi$ is a curve which maps birationally to a $-K_{X}$-conic.
    Since $M$ generically parametrizes birational stable maps, an open subscheme $V\subset M$ can embed to the Hilbert scheme $\mathrm{Hilb}(X)$ of $-K_{X}$-conics, and $\rho^{-1}(V) \subset M'$ embeds to the universal family $\mathrm{Univ}(X)$.
    By the universal property of the Hilbert scheme, we obtain a Cartesian diagram 
    \[
        \xymatrix{
            \psi^{-1}(U) \ar[r] \ar[d]_-{\psi} & \rho^{-1}(V) \ar[d]^-{\rho} \\
            U \ar[r] & V,
        }
    \]
    which yields the desired properties, completing the proof.
\end{proof}

\subsection{Iitaka dimension $1$ case}
Let $X$ be a Gorenstein terminal Fano threefold.
Assume furthermore that $X$ admits a del Pezzo fibration $\rho\colon X\rightarrow B$.
If we take any finite morphism $Z\rightarrow B$ of degree $\ge 2$, then the base change $f\colon X\times_{B} Z \rightarrow X$ is an $a$-cover.
The author was not able to see when the converse is true.
We only show the non-existence of $a$-covers with Iitaka dimension $1$ when $X$ is a del Pezzo threefolds with $-K_{X}$ is very ample.
See Definition \ref{Def_dP} for the definition of del Pezzo varieties.
\begin{lem}\label{adjoint_rigid_surface}
Let $X$ be a factorial del Pezzo threefold with an ample generator $H$.
Suppose that $H^{3} \ge 2$.
Then $X$ does not have any surface $S$ with $a(S,-K_{X}|_{S}) = 1$ and $\kappa(S, K_{S}-K_{X}|_{S})=0$.
\end{lem}
\begin{proof}
    Suppose the contrary that $X$ contains a surface $S$ with $a(S,-K_{X}|_{S}) = 1$ and $\kappa(S, K_{S}-K_{X}|_{S})=0$. 
    Let $\nu\colon \bar{S}\rightarrow S$ be the normalization. 
    Since $a (S, H) = a(X, H) = 2$ and $\kappa(S, K_{S}-K_{X}|_{S})=0$, $(\bar{S}, \nu^{*}H)$ is isomorphic to $(Q, \mathcal{O}_{Q}(1))$, where $Q$ is a quadric surface in $\mathbb{P}^{3}$. 
    Since $\dim |\mathcal{O}_{Q}(1)| = 3$, any strict sublinear system of $|\mathcal{O}_{Q}(1)|$ defines a map to $\mathbb{P}^{d}$ for $d \le 2$. 
    Thus $\nu$ must be an isomorphism and $(S, H) \cong (Q, \mathcal{O}_{Q}(1))$.
    Then we have $H^{2}\cdot S = 2$.
    Hence the only possibility is that $H^{3} = 2$ and $S \in |H|$.
    In this case, $H$ defines a double cover $\phi_{|H|}\colon X\rightarrow \mathbb{P}^{3}$ ramified along a quartic surface $B \subset \mathbb{P}^{3}$, and $S$ is the inverse image of a hyperplane of $\mathbb{P}^{3}$.
    Since $B$ does not contain any hyperplane, the restriction $\phi_{|H|}|_{S} = \phi_{|H|_{S}|}\colon S\rightarrow \mathbb{P}^{3}$ is also a ramified finite morphism.
    However, it contradicts the very ampleness of $\mathcal{O}_{Q}(1)$.
\end{proof}

As an immediate corollary, we obtain the following:
\begin{cor}\label{A-cover1_dP}
    Let $X$ be a factorial del Pezzo threefold with an ample generator $H$.
    Suppose that $H^{3} \ge 2$.
    Then there does not exist any $a$-cover $f\colon Y\rightarrow X$ with $\kappa(Y, K_{Y} - f^{*}K_{X}) = 1$.
\end{cor}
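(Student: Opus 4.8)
The plan is to deduce Corollary \ref{A-cover1_dP} from Lemma \ref{adjoint_rigid_surface} together with the geometry of the Iitaka fibration of $K_{Y}-f^{*}K_{X}$. First I would take an $a$-cover $f\colon Y\to X$ with $Y$ smooth and $\kappa := \kappa(Y, K_{Y}-f^{*}K_{X}) = 1$, and consider the Iitaka fibration $\psi\colon Y\dashrightarrow B$ attached to $K_{Y}-f^{*}K_{X}$; after replacing $Y$ by a suitable birational model (which changes neither the $a$-invariant nor the Iitaka dimension, by the birational invariance recorded in the definitions of $a$ and $b$) I may assume $\psi$ is a morphism onto a smooth curve $B$. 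A general fiber $F$ of $\psi$ is then a smooth surface, and the adjunction-type computation for the Iitaka fibration gives that $K_{F}-f^{*}K_{X}|_{F}$ is torsion, i.e. $(F, -f^{*}K_{X}|_{F})$ is adjoint rigid with $a(F, -f^{*}K_{X}|_{F}) = a(Y, -f^{*}K_{X}) = 1 = a(X,-K_{X})$.

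The next step is to push this adjoint rigid surface down to $X$. Since $f$ is generically finite, the restriction $f|_{F}\colon F\to f(F)$ is generically finite onto its image $Z := \overline{f(F)}\subset X$. If $\dim Z = 2$, then by Lemma \ref{a-inv_dominant_finite} applied to $f|_{F}$ we get $a(Z, -K_{X}|_{Z}) \ge a(F, -f^{*}K_{X}|_{F}) = 2$ (here $-K_{X}|_{Z} = \tfrac12 H|_{Z}$ up to the normalization, so after rescaling by the ample generator $H$ one reads $a(Z,H) \ge 2$); combined with $a(Z,H)\le a(X,H) = 2$ from Lemma \ref{a-inv_dominant_finite} this forces $a(Z,H) = 2$ and $Z$ adjoint rigid (here one uses that the general such $F$ sweeps out $Z$ and the restriction of an adjoint rigid structure is still adjoint rigid, cf. the proof of Proposition \ref{NonDom-highA}), contradicting Lemma \ref{adjoint_rigid_surface}. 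Hence $\dim Z \le 1$. But $Z$ cannot be a point, since then $f$ would contract the divisor $F$ to a point and $-f^{*}K_{X}|_{F}$ would be numerically trivial, contradicting $a(F,-f^{*}K_{X}|_{F}) = 1 > 0$ (an $a$-invariant is defined only for nef and big divisors, and a nef and big divisor on a surface is not numerically trivial). So $Z$ is a curve; then $f$ maps the general fiber $F$ of $\psi$ into a curve, forcing $f(Y)$ to be at most $2$-dimensional, contradicting the dominance of $f$.

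The main obstacle, and the only point requiring care, is making rigorous the transition from "$K_{Y}-f^{*}K_{X}$ has Iitaka dimension $1$" to "the general fiber of the Iitaka fibration is an adjoint rigid surface for $-f^{*}K_{X}$ with the correct $a$-invariant." This is where one must cite the standard adjunction formula for Iitaka fibrations (the restriction $\restr{(K_{Y}-f^{*}K_{X})}{F}$ is torsion for $F$ general) together with the characterization, used already in the proof of Lemma \ref{adjoint_rigid_surface}, that an adjoint rigid surface with $a = 2$ relative to an ample generator is forced by \cite{Hoering2010}*{Proposition 1.3} to be a quadric $(Q,\mathcal{O}_{Q}(1))$; the rest is bookkeeping with Lemma \ref{a-inv_dominant_finite}. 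Since the statement is advertised as "an immediate corollary," I would keep the write-up to a few lines: apply the Iitaka fibration, observe the general fiber is adjoint rigid of dimension $2$, push forward to $X$ to obtain an adjoint rigid surface on $X$ (the image cannot drop dimension by dominance of $f$), and invoke Lemma \ref{adjoint_rigid_surface} for the contradiction.
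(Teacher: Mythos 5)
Your overall strategy --- take the Iitaka fibration of $K_{Y}-f^{*}K_{X}$, show its general fibre $F$ is an adjoint rigid surface with $a(F,-f^{*}K_{X}|_{F})=1$, push it forward to an adjoint rigid surface $Z=\overline{f(F)}\subset X$ with $a(Z,H)=2$, and contradict Lemma \ref{adjoint_rigid_surface} --- is exactly the route the paper takes; the paper simply outsources the whole first part (general fibres of the Iitaka fibration of an $a$-cover map onto adjoint rigid subvarieties with the same $a$-invariant) to the cited \cite{Lehmann2022}*{Lemma 4.9}, which is why its proof is one line. Your reconstruction of that lemma is essentially right, and the closing dimension count ruling out $\dim f(F)\le 1$ is correct.

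There is, however, one step that is justified by the wrong lemma and is false as stated. You claim $a(Z,H)\le a(X,H)=2$ ``from Lemma \ref{a-inv_dominant_finite}.'' That lemma concerns \emph{dominant} generically finite morphisms onto $X$; it says nothing about subvarieties $Z\subset X$, and the inequality $a(Z,L)\le a(X,L)$ for subvarieties is false in general --- Section 3 of the paper is devoted precisely to classifying the subvarieties that violate it. The correct way to obtain the upper bound is to note that, as $F$ ranges over the fibres of the Iitaka fibration, the surfaces $Z=f(F)$ form a family covering $X$ (because $f$ is dominant), so the \emph{general} such $Z$ is not contained in the proper closed subset of Theorem \ref{clsd} and hence satisfies $a(Z,-K_{X})\le 1$, i.e.\ $a(Z,H)\le 2$. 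A similar remark applies to the adjoint rigidity of $Z$: this does not follow from Proposition \ref{NonDom-highA}, but from the standard fact (from \cite{Sengupta2021} and \cite{Lehmann2022}) that a generically finite surjection $g\colon F\to Z$ with $a(F,g^{*}L)=a(Z,L)=:a$ satisfies $\kappa(K_{\tilde Z}+a\phi^{*}L)\le\kappa(K_{F}+ag^{*}L)=0$. With these two repairs the argument closes. Note also a minor circularity in your exclusion of $\dim Z=0$: if $f|_{F}$ is not generically finite onto a surface, then $-f^{*}K_{X}|_{F}$ is not big and $a(F,-f^{*}K_{X}|_{F})$ is not even defined, so it cannot be invoked there; this is harmless because your final dimension count already forces $\dim f(F)=2$ for general $F$.
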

\begin{proof}
    Taking birational modifications $\psi\colon \tilde{Y}\rightarrow Y$ and $\phi\colon \tilde{X}\rightarrow X$, we can obtain a generically finite morphism $\tilde{f}\colon \tilde{Y}\rightarrow \tilde{X}$. 
    Since the $a$-invariant is a birational invariant (\cite{Hassett2015}*{Proposition 2.7}), we have 
    \[a(\tilde{Y}, -\tilde{f}^{*}\phi^{*}K_{X}) = a(Y, -f^{*}K_{X}) = a(X, -K_{X}) = a(\tilde{X}, -\phi^{*}K_{X}). \]
    Moreover, the Iitaka dimension of $K_{\tilde{Y}}-\tilde{f}^{*}\phi^{*}K_{X}$ is $1$ by assumption. 
    Then \cite{Lehmann2022}*{Lemma 4.9} shows that the image $\tilde{S}\subset \tilde{X}$ of a general fiber of the Iitaka fibration via $\tilde{f}$ is a surface with $a(\tilde{S}, -\phi^{*}K_{X}) = a(\tilde{X},-\phi^{*}K_{X})$ and $\kappa(\tilde{S}, K_{\tilde{S}}-\phi^{*}K_{\tilde{X}})=0$. 
    Since $\tilde{S}$ is birational to the image $S:=\phi(S)$, $S$ satisfies $a(S, -K_{X}|_{S}) = a(X, -K_{X})$ and $\kappa(S, K_{S}-K_{X}|_{S})=0$. 
    However, it contradicts Lemma \ref{adjoint_rigid_surface}. 
\end{proof}

\subsection{Iitaka dimension $0$ case}
We say that an $a$-cover $f\colon Y\rightarrow X$ is adjoint rigid if $\kappa(Y, K_{Y}-f^{*}K_{X}) = 0$.
\begin{thm}[cf. \cite{Beheshti2022}*{Theorem 5.8}]\label{AdjointRigid}
Let $X$ be a terminal factorial Fano threefold without E5 contractions.
Then there exist no adjoint rigid $a$-covers.
\end{thm}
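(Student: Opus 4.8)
The plan is to argue by contradiction: suppose $f\colon Y\rightarrow X$ is an adjoint rigid $a$-cover with $Y$ smooth, so $\kappa(Y, K_{Y}-f^{*}K_{X}) = 0$ and $a(Y, -f^{*}K_{X}) = 1$. By the structure theory of adjoint rigid varieties (the same circle of ideas used in Lemma \ref{adjoint_rigid_surface} via \cite{Lehmann2021a}*{Lemma 5.3}), we may pass to a birational model on which $Y$ is terminal and $K_{Y} \sim_{\mathbb{Q}} f^{*}K_{X}$; in particular $-K_{Y} \sim_{\mathbb{Q}} -f^{*}K_{X}$ is nef and big, so $Y$ is a (weak) Fano threefold and $f$ is crepant. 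The first main step is therefore to reduce to the case where $Y$ is itself a $\mathbb{Q}$-factorial terminal weak Fano threefold with $K_{Y} = f^{*}K_{X}$, and $f$ is a finite crepant cover of degree $\ge 2$, étale in codimension $1$ by purity of the branch locus (a crepant finite cover between terminal varieties cannot be ramified in codimension $1$).

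The second step is to exploit that a finite crepant cover étale in codimension $1$ of a (terminal factorial) Fano threefold forces strong constraints. Concretely, I would run a minimal model program / Mori fiber space analysis on $Y$: since $Y$ is a terminal factorial (after passing to a small $\mathbb{Q}$-factorialization if needed) weak Fano threefold, $-K_{Y}$ being big and nef, one obtains an extremal contraction. The key observation is that an étale-in-codimension-$1$ cover of degree $\ge 2$ must factor through a nontrivial cover coming from the structure of $X$: either through a conic bundle / del Pezzo fibration base change (but that would give $\kappa \ge 1$, contradicting adjoint rigidity), or the cover $Y\to X$ is unramified in codimension one which, combined with $X$ being factorial and rationally connected (hence simply connected, by a theorem of Campana / Kollár), forces the cover to be trivial — a contradiction with $\deg f \ge 2$. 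The role of the E5 hypothesis is that E5 contractions produce quotient singularities of the form $\mathbb{A}^3/\pm 1$ whose local fundamental group is nontrivial; excluding E5 means $X$ has étale fundamental group trivial even on the open part, so no nontrivial étale-in-codimension-$1$ cover can exist. That is the heart of the argument.

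So the logical skeleton is: (i) reduce to $f$ finite crepant, using adjoint rigidity to get $K_{Y}\sim_{\mathbb{Q}} f^{*}K_{X}$; (ii) show $f$ is étale in codimension $1$ by crepancy and terminality; (iii) show that a terminal factorial Fano threefold without E5 contractions admits no nontrivial cover étale in codimension $1$ — this is where simple-connectedness of rationally connected varieties (Campana, Kollár) enters, together with the observation that the only terminal singularities of $X$ producing a nontrivial local fundamental group in codimension $\le 2$ would have to come from an $\mathbb{A}^3/\pm 1$-type point, i.e. an E5 contraction center. Combining (i)–(iii) gives $\deg f = 1$, contradicting that $f$ is an $a$-cover (hence thin, of degree $\ge 2$).

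I expect the main obstacle to be step (iii): controlling the étale fundamental group of the smooth locus of $X$, or equivalently ruling out covers that are ramified only over the (finitely many, isolated cDV) singular points of $X$. Since the singularities are isolated cDV points, a cover étale in codimension $1$ is automatically étale over all of $X$ except possibly these points, and near a cDV point — which is a hypersurface singularity, hence has trivial local étale fundamental group in the complement (the link of a cDV singularity is simply connected, being a rational double point locus whose resolution has a simply connected exceptional configuration only in the surface case — in the threefold cDV case the local fundamental group of the punctured neighborhood is trivial) — the cover extends to an honest étale cover. Thus the cover is genuinely étale, and simple-connectedness of the rationally connected $X$ finishes it. The delicate point is handling the E5 case correctly: an E5 divisor means $X'$ has a point locally $\mathbb{A}^3/\iota$, and it is precisely such a point that could be the branch locus of a degree-$2$ cover; so one must verify that \emph{absence} of E5 contractions is exactly what rules this out. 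I would make this precise by noting that any crepant degree-$2$ cover ramified over an isolated terminal point gives, after taking the quotient, an E5-type extraction, contradicting the hypothesis.
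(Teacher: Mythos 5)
There is a genuine gap, and it sits exactly where the theorem's hypothesis does its work. Your step (i) asserts that adjoint rigidity lets you ``pass to a birational model on which $K_{Y}\sim_{\mathbb{Q}} f^{*}K_{X}$,'' i.e.\ that the cover becomes crepant. But $\kappa(Y,K_{Y}-f^{*}K_{X})=0$ only says that the (effective) ramification divisor $R=K_{Y}-f^{*}K_{X}$ is \emph{rigid}; it does not say $R=0$, nor that $R$ can be contracted by a birational modification that stays generically finite over $X$. The paper's proof spends essentially all of its effort on this point: it runs the relative $K_{Y}$-MMP to a $\mathbb{Q}$-factorial terminal model $\bar f\colon\bar Y\to X$, and then invokes the analysis of \cite{Beheshti2022}*{\S 5.3} (classification of divisorial contractions of terminal threefolds) to show that if the adjoint divisor $K_{\bar Y}-\bar f^{*}K_{X}$ is nonzero, its image $S\subset X$ is a surface swept out by a $2$-dimensional family of $-K_{X}$-lines; by Corollary \ref{nondom}(1) and factoriality, such an $S$ must be an E5 divisor. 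The absence of E5 contractions is what kills this case and forces the ramification divisor to vanish. Your proposal simply assumes the conclusion of this step.

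Relatedly, you misplace the role of the E5 hypothesis. You put it in step (iii), claiming that E5 contractions give $X$ points of type $\mathbb{A}^{3}/\pm 1$ with nontrivial local fundamental group. But the $\mathbb{A}^{3}/\iota$ point lies on the \emph{target} $X'$ of an E5 contraction, not on $X$: $X$ itself is Gorenstein terminal, hence has only isolated cDV (hypersurface) singularities, whose punctured neighborhoods are simply connected regardless of whether $X$ admits E5 contractions. So your step (iii) --- purity of the branch locus, triviality of local fundamental groups at cDV points, and simple connectedness of $X$ via \cite{Hacon2007} --- is correct but needs no E5 hypothesis at all, and indeed it coincides with the paper's endgame once the ramification divisor is known to vanish. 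To repair the argument you must supply the MMP/divisorial-contraction analysis showing that a nonzero rigid ramification divisor would produce an E5 divisor in $X$.
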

\begin{proof}%purity holds from [Kallstrom, Theorem 4.4] or [Cutkosky, ?] + teminality (regular in codimension 2)?
Let $f\colon Y \rightarrow X$ be an adjoint rigid $a$-cover with $Y$ smooth.
As a consequence of relative $K_{Y}$-MMP, we obtain a birational map $\phi\colon Y\dashrightarrow \bar{Y}$ to a $\mathbb{Q}$-factorial terminal Fano threefold, and a generically finite morphism $\bar{f}\colon \bar{Y}\rightarrow X$ such that $f = \bar{f} \circ \phi$.
This process can be seen as steps of $(K_{Y}-f^{*}K_{X})$-MMP.
Hence we have $\kappa(\bar{Y}, K_{\bar{Y}}-\bar{f}^{*}K_{X}) = \kappa(Y, K_{Y}-f^{*}K_{X}) = 0$.
Now we can apply the argument in \cite{Beheshti2022}*{\S 5.3} where they use the classification of divisorial contractions of $\mathbb{Q}$-factorial terminal threefolds. 
Then we see that the image $S$ of the adjoint divisor $K_{\bar{Y}}-\bar{f}^{*}K_{X}$ is swept out by a $2$-dimensional family of $-K_{X}$-lines.
Since we assume $X$ is factorial, $S$ is an E5 divisor by Corollary \ref{nondom}.
However, it contradicts the assumption.
Thus, the ramification divisor $K_{\bar{Y}}-\bar{f}^{*}K_{X}$ of $\bar{f}$ is equal to zero.
Since $X$ is a terminal factorial Fano threefold, we see that the ramification locus of any finite morphism has pure codimension $1$, and that $X$ is simply connected by \cite{Hacon2007}. 
It yields the contradiction that $f$ is birational.
Thus the claim holds.
\end{proof}
%\begin{thm}\label{AdjointRigid_oneE5}
%Let $X$ be a Gorenstein terminal Fano threefold which has at most one plane.
%Then there is no adjoint rigid $a$-cover.
%\end{thm}
%Firstly, we prove this theorem when $X$ is factorial.
%\begin{prop}
%Let $X$ be a terminal factorial Fano threefold.
%Then there is no adjoint rigid $a$-cover.
%\end{prop}
%\begin{proof}
%The proof is essentially the same as \cite[Theorem 5.8]{Beheshti2022}.
%Let $f\colon Y\rightarrow X$ be an adjoint rigid $a$-cover such that $Y$ is smooth.
%Then 
%\end{proof}
%Before we prove Theorem \ref{AdjointRigidCover}, we study a structure of the Mori cone of divisors.
%\begin{lem}\label{ConeGens}
%Let $X$ be a Gorenstein terminal Fano threefold, and let $\tau \colon \bar{X} \rightarrow X$ be a $\mathbb{Q}$-factorialization.
%Let $D_{1}, \dots , D_{r}$ be the strict transform of planes on $X$ and let $F \subset \overline{\mathrm{Eff}}^{1}(\bar{X})$ be the subcone spanned by the classes of $D_{1}, \dots , D_{r}$.
%Then the classes of $D_{1}, \dots , D_{r}$ form a basis of $F \cap N^{1}(\bar{X})_{\mathbb{Z}}$.
%\end{lem}

Combining Lemma \ref{conicbdl}, Corollary \ref{A-cover1_dP}, and Theorem \ref{AdjointRigid}, we obtain Theorem \ref{a-covers}.

\section{Movable Bend-and-Break}
In this section, we prove Movable Bend-and-Break (MBB for short).
Since the existence of terminal factorial singularities has little effect on the argument in \cite{Beheshti2022}*{\S.6}, one can obtain very similar results.
\par
Recall that $\overline{\mathrm{Rat}}(X) \subset \overline{M}_{0,0}(X)$ denotes the union of irreducible components generically parametrizing stable maps with irreducible domains, and $\overline{\mathrm{Free}}(X) \subset \overline{\mathrm{Rat}}(X)$ denotes the union of irreducible components generically parametrizing free stable maps.
\subsection{MBB for multiple covers}
For a free curve $f\colon \mathbb{P}^{1}\rightarrow X$, write the restricted tangent bundle $f^{*}T_{X}$ as 
\[f^{*}T_{X} = \mathcal{O}(a_{1})\oplus \mathcal{O}(a_{2})\oplus \mathcal{O}(a_{3}),\]
with $a_{1}\ge a_{2}\ge a_{3} \ge 0$.
If $a_{2}\ge 1$, then a general deformation of $f$ is an immersion by \cite{Kollar1996}*{II.3.14}.
When $a_{2}= a_{3} = 0$, the argument is similar to the case when $X$ is a smooth Fano threefold.
Note that if $X$ is Gorenstein terminal Fano threefold, any dominant component of $\overline{\mathrm{Rat}}(X)$ generically parametrizes free curves by Lemma \ref{free_iff_dominant}.
\begin{prop}\label{multiple_covers}
Let $X$ be a Gorenstein terminal Fano threefold and let $M \subset \overline{\mathrm{Free}}(X)$ be a component generically parametrizing stable maps $f\colon \mathbb{P}^{1}\rightarrow X$ such that $f^{*}T_{X} = \mathcal{O}(a)\oplus \mathcal{O}^{\oplus 2}$, where $a\ge 3$.
Then the general member of $M$ is a multiple cover of a $-K_{X}$-conic. 
\end{prop}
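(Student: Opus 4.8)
The plan is to confine the members of $M$, one general point at a time, to a surface carrying a ruling through which $f$ factors, and then to identify the fibres of that ruling with $-K_X$-conics. I would start with the bookkeeping. Let $f\colon\mathbb{P}^{1}\to X$ be a general member of $M$, so $f$ is free with $f^{*}T_{X}=\mathcal{O}(a)\oplus\mathcal{O}^{\oplus 2}$ and $-K_{X}\cdot f_{*}\mathbb{P}^{1}=a$, whence $\dim M=a$ and, by Lemma~\ref{free_iff_dominant}, $M$ is dominant. Fix a general point $p\in f(\mathbb{P}^{1})$ (so $p$ is general in $X$) and let $M_{p}\subset M$ be the sublocus of stable maps whose image contains $p$; a count on the one-pointed Kontsevich space gives $\dim M_{p}=a-2$. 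Let $S\subset X$ be the closure of the union of the images of the members of $M_{p}$; since $f\in M_{p}$ we have $f(\mathbb{P}^{1})\subset S$.

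The central observation, which I will use twice, is the following: a first order deformation of $f$ fixing a preimage $q$ of $p$ is a global section of $f^{*}T_{X}$ vanishing at $q$, hence of $\mathcal{O}(a-1)\oplus\mathcal{O}(-1)^{\oplus 2}$, hence — the last two summands having no sections — a section of the positive sub-line-bundle $L:=\mathcal{O}(a)\subset f^{*}T_{X}$ vanishing at $q$. Since $a\ge 3>2$, the subsheaf $T_{\mathbb{P}^{1}}=\mathcal{O}(2)$ also lands in $L$, so at a general point $z$ the line $L_{z}$ is exactly the tangent direction of $f$ at $f(z)$; writing $f'=\lambda\ell$ for a local frame $\ell$ of $L$ and a section $\lambda$ of $\mathcal{O}(a-2)$, a $2$-jet computation at $q$ shows that all the deformed curves are tangent at $p$ to the fixed plane $\langle\ell(q),\ell'(q)\rangle$. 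Combined with $\dim M_{p}\ge 1$ this forces $\dim S=2$.

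Next I would resolve $\phi\colon\tilde{S}\to S$, put $g:=\iota\circ\phi\colon\tilde{S}\to X$ (with $\iota\colon S\hookrightarrow X$), and let $\tilde{f}$ be the lift of a general $f\in M_{p}$. The strict transforms of members of $M_{p}$ form a dominant family of rational curves on $\tilde{S}$, so $\tilde{f}$ is free on $\tilde{S}$ (cf.\ Lemma~\ref{free_iff_dominant}): $\tilde{f}^{*}T_{\tilde{S}}=\mathcal{O}(b_{1})\oplus\mathcal{O}(b_{2})$ with $b_{1}\ge b_{2}\ge 0$. The differential of $g$ gives an injection $\tilde{f}^{*}T_{\tilde{S}}\hookrightarrow f^{*}T_{X}=\mathcal{O}(a)\oplus\mathcal{O}^{\oplus 2}$; a summand of positive degree must map into the $\mathcal{O}(a)$-factor, and two positive summands cannot both fit into one line bundle, so $b_{2}=0$ and $b_{1}\le a$. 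On the other hand $M_{p}$ maps into the family of members through $\phi^{-1}(p)$, which has dimension $b_{1}-2$, so $a-2\le b_{1}-2$; hence $b_{1}=a$ and $\tilde{f}^{*}T_{\tilde{S}}=\mathcal{O}(a)\oplus\mathcal{O}$. Applying the central observation now on $\tilde{S}$: a first order deformation of $\tilde{f}$ fixing a point is a section of the $\mathcal{O}(a)$-summand, which contains $T_{\mathbb{P}^{1}}$ and is therefore tangent to the image curve, so every member of the family through a general point $q\in\tilde{S}$ has the same image $E_{q}$. Distinct $E_{q}$ are disjoint (a common point would force $E_{q}=E_{q'}$), so the $E_{q}$ sweep out $\tilde{S}$ as the fibres of a fibration $\pi\colon\tilde{S}\to B$; by generic smoothness the general fibre $E$ is a smooth rational curve with $E^{2}=0$, so $T_{\tilde{S}}|_{E}\cong\mathcal{O}_{E}(2)\oplus\mathcal{O}_{E}$, and writing $\tilde{f}$ as a degree-$e$ cover of $E$ gives $\mathcal{O}(a)\oplus\mathcal{O}=\tilde{f}^{*}T_{\tilde{S}}=\mathcal{O}(2e)\oplus\mathcal{O}$, so $a=2e$.

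Finally I would descend to $X$. Since $-g^{*}K_{X}\cdot E=2$, $g$ does not contract $E$, so $C:=g(E)$ is a rational curve with $\delta\cdot(-K_{X}\cdot C)=2$, where $\delta=\deg(g|_{E})$. If $\delta=2$ then $f=g\circ\tilde{f}$ is a multiple cover of the $-K_{X}$-line $C$, and letting $f$ range over $M$ these lines would cover $X$, contradicting that $-K_{X}$-lines cannot form a dominant family (a $-K_{X}$-line through a general point would be free of anticanonical degree $1$, impossible by a dimension count). Hence $\delta=1$, $C$ is a $-K_{X}$-conic, and $f$ is an $\tfrac{a}{2}$-fold cover of $C$; in particular $a$ is even, so for $a=3$ no such $M$ exists and the assertion is vacuous. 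The two uses of the central observation are the crux: on $X$ it needs the $2$-jet computation to pin down $\dim S=2$, and on $\tilde{S}$ it needs some care to promote ``the image is rigid through a point'' to an honest fibration and to control the resolution and the genericity of $\tilde{f}$; that is where I expect the real work to lie.
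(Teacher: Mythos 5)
The argument breaks at its first substantive step: the claim that the members of $M_{p}$ sweep out a surface $S$ with $\dim S=2$. Your own ``central observation'' proves the opposite. A first-order deformation of $f$ preserving the condition that $p$ lies on the image is a global section of the positive sub-line-bundle $L=\mathcal{O}(a)\subset f^{*}T_{X}$, and since $T_{\mathbb{P}^{1}}\subset L$, every such section evaluates at every point $z$ into the tangent direction $df(T_{z}\mathbb{P}^{1})$. Hence the evaluation map from the universal curve over $M_{p}$ has rank $\le 1$ at a general point, so $S$ is $1$-dimensional: to first order the image curve does not move at all. This is exactly the content of \cite{Kollar1996}*{II.3.10} in the form the paper uses it (``swept locus of dimension $\ge 2$ forces two positive summands''), and it is consistent with the answer: if the proposition holds, only finitely many $-K_{X}$-conics pass through a general $p$, so $S$ is a finite union of conics. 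Your write-up is also internally inconsistent here: on $\tilde{S}$, where $\tilde{f}^{*}T_{\tilde{S}}$ again has a single positive summand, you apply the same observation and correctly conclude that the image $E_{q}$ is \emph{fixed}; the identical hypothesis on $X$ must give the identical conclusion, contradicting $\dim S=2$. With $S$ a curve, the resolution $\tilde{S}$, the splitting $\mathcal{O}(a)\oplus\mathcal{O}$, the fibration $\pi$, and the computation $a=2e$ all have no content, so the proof does not go through as written; the unproved ``$2$-jet computation'' you flag as the real work is in fact aimed at establishing a false statement.

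The correct statement $\dim S=1$ makes the conclusion immediate and would repair your approach: all members of $M_{p}$ near $f$ have a common image curve $C$, so $M_{p}$ locally embeds into the space of degree-$r$ covers of $\mathbb{P}^{1}$, where $a=r(-K_{X}\cdot C)$; thus $a-2=\dim M_{p}\le 2r-2$, giving $-K_{X}\cdot C\le 2$, and since $-K_{X}$-lines cannot form a dominant family we get $-K_{X}\cdot C=2$. The paper's route is different and avoids jets entirely: it factors $f=h\circ g$ through the normalization of its image by L\"uroth, equates $\dim M=a=r\deg h^{*}T_{X}$ with $\dim M=2r-2+\deg h^{*}T_{X}$ to force $r=1$ or $\deg h^{*}T_{X}=2$, and then excludes $\deg h^{*}T_{X}\ge 3$ by the rank argument above applied to $h$ (if the deformations of $h$ through a general point swept out a surface, $h^{*}T_{X}$, and hence $f^{*}T_{X}=g^{*}h^{*}T_{X}$, would have two positive summands). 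Either way, the key mechanism is the one your step (a) gets backwards.
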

\begin{proof}
    We run the argument similar to the proof of \cite{Kollar1996}*{II.3.14.4}.
    Let $(f\colon \mathbb{P}^{1}\rightarrow X)\in M$ be a free stable map such that $f^{*}T_{X} = \mathcal{O}(a) \oplus \mathcal{O}^{\oplus 2}$.
    Since $f$ is free, the image of $f$ is disjoint from the singular locus of $X$. 
    By L\"{u}roth's theorem, $f$ is the composition of a finite morphism $g\colon \mathbb{P}^{1} \rightarrow \mathbb{P}^{1}$ of degree $r$ and a birational morphism $h\colon \mathbb{P}^{1}\rightarrow X$.
    Let $N \subset \overline{\mathrm{Free}}(X)$ be the irreducible component parametrizing deformations of $h$.
    Then $\dim N = \deg h^{*}T_{X}$ since $h$ is also free.
    Since $\dim M = a = r\deg h^{*}T_{X}$, and on the other hand, $\dim M = 2r -2 + \deg h^{*}T_{X}$, we see that $r = 1$ or $\deg h^{*}T_{X} = 2$.
    \par
    Suppose that $\deg h^{*}T_{X} \ge 3$.
    Take a point $p \in \mathbb{P}^{1}$ and we may assume that $h$ maps $p$ to a general point $x$ of $X$.
    Since $\dim N = \deg h^{*}T_{X} \ge 3$, we see that the subset $Z \subset X$ dominated by rational curves in $N$ passing through $x$ has dimension $\ge 2$.
    Then, by \cite{Kollar1996}*{II.3.10}, $f^{*}T_{X}$ must have at least two positive summands, a contradiction.
    Thus we conclude that $\deg h^{*}T_{X} = 2$ and $r\ge 2$.
\end{proof}

\begin{cor}[cf. \cite{Beheshti2022}*{Proposition 6.2}]\label{MBB_multiple_covers}
Let $X$ and $M$ be as in Proposition \ref{multiple_covers}.
Then $M$ contains a stable map which is a union of two free curves.
\end{cor}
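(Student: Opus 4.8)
The plan is to exploit the structure revealed in Proposition~\ref{multiple_covers}: a general member of $M$ is a degree-$2$ (or higher) cover of a $-K_X$-conic. Write the general member as $f = h\circ g$, where $g\colon\mathbb{P}^1\to\mathbb{P}^1$ has degree $r\ge 2$ and $h\colon\mathbb{P}^1\to X$ is a birational map onto a $-K_X$-conic $C$. The point is that $M$ is then (birationally) dominated by the space $\overline{M}_{0,0}(\mathbb{P}^1,r)$ of degree-$r$ self-maps of $\mathbb{P}^1$ composed with the fixed family of conics, so degenerations of the cover $g$ produce degenerations of $f$ while keeping the underlying curve $C$ fixed.

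\textbf{The degeneration.} First I would degenerate $g\colon\mathbb{P}^1\to\mathbb{P}^1$ inside $\overline{M}_{0,0}(\mathbb{P}^1,r)$ to a stable map $g_0\colon C_0\to\mathbb{P}^1$ whose domain $C_0$ has two components $C_0 = C_0^{(1)}\cup C_0^{(2)}$ meeting at one node, with $g_0$ restricting to degree $r_1\ge 1$ on $C_0^{(1)}$ and degree $r_2\ge 1$ on $C_0^{(2)}$, where $r_1+r_2=r$ — this is an elementary statement about covers of $\mathbb{P}^1$ by $\mathbb{P}^1$, e.g. splitting $z\mapsto z^r$ as $z^{r_1}$ and $z^{r_2}$ glued at the ramification point. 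Composing with $h\colon\mathbb{P}^1\to X$ gives a stable map $f_0 = h\circ g_0\colon C_0\to X$ whose restriction to each $C_0^{(i)}$ is an $r_i$-fold cover of the conic $C$. Since $M$ generically parametrizes free curves and $C$ is a free $-K_X$-conic (its tangent bundle pulls back with all summands nonnegative, in fact $h^*T_X=\mathcal O(2)\oplus\mathcal O^{\oplus 2}$ up to the balance computed in the proof of Proposition~\ref{multiple_covers}), each $r_i$-fold cover $h\circ(g_0|_{C_0^{(i)}})$ is again a free rational curve: freeness is preserved under pullback along a finite map of $\mathbb{P}^1$'s since a direct sum of nonnegative line bundles pulls back to a direct sum of nonnegative line bundles. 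This $f_0$ is the desired stable map: its domain has two irreducible components and each restriction is a free rational curve.

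\textbf{Why it lies in $M$.} The remaining point is that $f_0$ lies in the \emph{same} component $M$. Here I would argue that the family of all $f = h\circ g$ with $h$ fixed (or varying in the family $N$ of deformations of $h$, which by Proposition~\ref{multiple_covers} is a family of free conics) and $g$ ranging over $\overline{M}_{0,0}(\mathbb{P}^1,r)$ sweeps out a closed, irreducible (or at least connected through the generic locus) subset of $\overline{M}_{0,0}(X)$ whose generic member lies in $M$; since $\overline{M}_{0,0}(\mathbb{P}^1,r)$ is irreducible and its boundary point $g_0$ is a limit of the interior points $g$, the corresponding $f_0$ is a limit of members of $M$, hence $f_0\in\overline M = M$. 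Concretely, one fixes a general $h$ and takes the one-parameter subfamily of $\overline{M}_{0,0}(\mathbb{P}^1,r)$ connecting a general $g$ to $g_0$; its image under post-composition with $h$ is a connected curve in $\overline{M}_{0,0}(X)$ meeting $M$, so its special fiber $f_0$ is in $M$.

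\textbf{Main obstacle.} The main technical point is the last one — verifying that the reducible stable map $f_0$ genuinely lies in the component $M$ and not merely in $\overline{M}_{0,0}(X)$. This requires care because a priori post-composition $\overline{M}_{0,0}(\mathbb{P}^1,r)\to\overline{M}_{0,0}(X)$, $g\mapsto h\circ g$, need not be a morphism on the nose (stable maps $h\circ g$ may need stabilization), but on the open locus of interest it is well-defined and continuous, and irreducibility of $\overline{M}_{0,0}(\mathbb{P}^1,r)$ together with $\dim M = a = r\cdot\deg h^*T_X$ forces the generic member of the swept-out family to be in $M$. Everything else — the combinatorial degeneration of $g$ and the preservation of freeness under pullback by a finite map of $\mathbb{P}^1$'s — is routine.
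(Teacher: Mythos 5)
Your proposal is correct and follows essentially the same route as the paper: the paper likewise writes a general member of $M$ as $h\circ g$ with $g$ a multiple cover of $\mathbb{P}^1$ and $h$ birational onto a free $-K_X$-conic, degenerates $g$ into a two-component cover, and concludes that the composed stable map (still lying in $M$) is a union of two free curves. Your write-up simply makes explicit two points the paper leaves implicit — that freeness of the pieces follows from pulling back $h^*T_X=\mathcal{O}(2)\oplus\mathcal{O}^{\oplus 2}$ along the finite maps, and that irreducibility of $\overline{M}_{0,0}(\mathbb{P}^1,r)$ forces the degenerate map to remain in the component $M$ — both of which are handled correctly.
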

\begin{proof}
By Proposition \ref{multiple_covers}, the general element $f\colon \mathbb{P}^{1}\rightarrow X$ of $M$ can be seen as the composition of an $r$-sheeted cover $g\colon \mathbb{P}^{1} \rightarrow \mathbb{P}^{1}$ and a birational morphism $h\colon \mathbb{P}^{1}\rightarrow X$ to a $-K_{X}$-conic.
Suppose $h$ is a general map in the component of $\overline{M}_{0,0}(X)$ parametrizing deformations of $h$, and let $h^{*}T_{X} = \mathcal{O}(a_{1}) \oplus \mathcal{O}(a_{2}) \oplus \mathcal{O}(a_{3})$ such that $a_{1} + a_{2} + a_{3} = 2$. 
Then we have $f^{*}T_{X} = \mathcal{O}(a_{1}r) \oplus \mathcal{O}(a_{2}r) \oplus \mathcal{O}(a_{3}r)$. 
Since we assume $f$ is a free stable map, we see that $h^{*}T_{X} = \mathcal{O}(2) \oplus \mathcal{O}^{\oplus 2}$. 
Now one can deform $g$ into a stable map $g'\colon C_{1}\cup C_{2}\rightarrow \mathbb{P}^{1}$, which is a union of an $r_{1}$-sheeted cover $g'|_{C_{1}}\colon C_{1}\rightarrow \mathbb{P}^{1}$ and an $r_{2}$-sheeted cover $g|_{C_{2}}\colon C_{2}\rightarrow \mathbb{P}^{1}$, where $r_{1} + r_{2} = r$. 
Then $f$ deforms to a stable map $f' := h\circ g'\colon C_{1}\cup C_{2}\rightarrow X$, which is a union of free stable maps since $(f'|_{C_{i}})^{*}T_{X} = \mathcal{O}(2r_{i}) \oplus \mathcal{O}^{\oplus 2}$ for $i\in \{1,2\}$. 
% Since $g$ degenerates into a stable map whose domain is a union of two projective lines, we conclude that $f$ also degenerates into a stable map which is a union of two free curves.
\end{proof}
\subsection{MBB for birational stable maps}
We recall the notion of basepoint free families from \cite{Beheshti2022}: 
\begin{dfn}[\cite{Beheshti2022}*{Definition 6.3}]
Let $X$ be a normal projective variety.
A basepoint free family of reduced irreducible curves on $X$ consists of a diagram 
\[
\xymatrix{
    \mathcal{U} \ar[r]^-{s} \ar[d]_-{p} & X, \\
    V & 
}
\]
where $V$ is irreducible, $p$ is a proper flat morphism with reduced irreducible fibers of dimension $1$, and $s$ is a flat morphism which maps any fiber of $p$ birationally to a curve on $X$.
\end{dfn}
In this paper, we only consider basepoint free families constructed as follows:
\begin{constr}
    Let $X$ be a normal projective variety of dimension $n$ and let $L$ be a big and basepoint free divisor on $X$.
    Consider the Grassmannian $\mathbb{G} := \mathbb{G}(n-2, |L|)$ of $(n-2)$-dimensional sublinear systems of $|L|$.
    By Bertini theorem, there is an open dense subset $V\subset \mathbb{G}$ such that for any $\mathfrak{d}\in V$, the base locus $\mathrm{Bs}(\mathfrak{d})$ is a reduced irreducible curve on $X$.
    We set 
    \[\mathcal{U} = \{(x, \mathfrak{d}) \mid x \in \mathrm{Bs}(\mathfrak{d}))\}\subset X\times V. \]
    Then the diagram 
    \[
    \xymatrix{
        \mathcal{U} \ar[r]^-{s} \ar[d]_-{p} & X, \\
        V & 
    }
    \]
    is a basepoint free family of reduced irreducible curves on $X$, where $p$ and $s$ are the projections to each factor.
    We say that the basepoint free family $p$ is constructed from $L$.
\end{constr}

Now we prove one of the main theorems: 
\newtheorem*{MainThm2}{\rm\bf Theorem \ref{MBB_noE5}}
\begin{MainThm2}[Movable Bend-and-Break]
    Let $X$ be a terminal factorial Fano threefold without E5 contractions.
    Then any component $M$ of $\overline{\mathrm{Free}}(X)$ parametrizing stable maps of anticanonical degree at least $5$ contains a stable map $g\colon C\rightarrow X$ such that 
    \begin{itemize}
        \item the domain $C$ consists of two irreducible components $C_{1}, C_{2}$, and 
        \item each restriction $g|_{C_{i}}\colon C_{i}\rightarrow X$ is a free rational curve.
    \end{itemize}
\end{MainThm2}

\begin{proof}%[Proof of Theorem \ref{MBB_noE5}]
The argument is similar to \cite{Beheshti2022}*{Proposition 6.5 and Theorem 6.7}. 
By Corollary \ref{MBB_multiple_covers}, it suffices to consider the case that the restricted tangent bundle $f^{*}T_{X}$ has at least two positive summands for a general $[f] \in M$.

Take a smooth resolution $\phi \colon \tilde{X}\rightarrow X$. 
Let $\tilde{M}$ be the irreducible component containing strict transforms $\tilde{f}\colon \mathbb{P}^{1}\rightarrow \tilde{X}$ of free stable maps $f\colon \mathbb{P}^{1}\rightarrow X$ parametrized by $M$. 
Since the strict transforms $\tilde{f}$ avoid the exceptional locus of $\phi$, these are also free stable maps, and are general members of $\tilde{M}$.  
Moreover, the normal bundle has the same form $N_{\tilde{f}/\tilde{X}} \cong N_{f/X} \cong \mathcal{O}(a) \oplus \mathcal{O}(b)$, where $b \ge a \ge 0$ and $a + b \ge 3$. 

Now we take the contraction $X\rightarrow X'$ that contracts all divisors of type E3, E4, and E1 of the form $\mathbb{P}^{1}\times \mathbb{P}^{1}$. 
We construct the basepoint free family $p\colon \mathcal{U}\rightarrow V$ on $\tilde{X}$ from the pull back of a very ample divisors on $X'$ by the composition $\phi'\colon \tilde{X}\rightarrow X\rightarrow X'$. 
By \cite{Beheshti2022}*{Lemma 6.4}, we obtain the sublocus $\tilde{T} \subset \tilde{M}$ which is the closure of one parameter family of stable maps $\tilde{f}\colon \mathbb{P}^{1} \rightarrow \tilde{X}$ whose images contain general $r$ points of $\tilde{X}$ and meet with general $s$ members of the basepoint free family $p$, where 
\[
(r, s)=
\begin{cases}
    (a, 1), & \mbox{if } a = b\\
    (a+1, b-a-1), & \mbox{if } a < b.
\end{cases}
\]
Let $(\tilde{g}\colon C\rightarrow \tilde{X})\in \tilde{T}$ be a stable map with reducible domain. 
Write 
\[C = \bigcup_{i=1}^{l}C_{i} \cup \bigcup_{j=1}^{m}D_{j} \cup \bigcup_{k=1}^{n}E_{k} \cup (\mbox{contracted components}), \]
where $\tilde{g}|_{C_{i}}$ are free, $\tilde{g}|_{D_{j}}$ are non-free with $\tilde{g}(D_{j})\not\subset \mathrm{Exc}(\phi)$, and $\tilde{g}|_{E_{k}}$ are non-free with $\tilde{g}(E_{k})\subset \mathrm{Exc}(\phi)$. 
Let $(g\colon \bar{C}\rightarrow X) \in M$ be the stabilization of the composition $\phi \circ \tilde{g}$, where 
\[\bar{C} = \bigcup_{i=1}^{l}C_{i} \cup \bigcup_{j=1}^{m}D_{j} \cup (\mbox{contracted components}). \]
Note that each $\tilde{g}|_{C_{i}}$ cannot meet with the exceptional locus $\mathrm{Exc}(\phi)$: if deformations of a rational curve that meet with $\mathrm{Exc}(\phi)$ dominate $\tilde{X}$, then their images on $X$ pass through the singular locus of $X$ and dominate $X$, which contradicts Lemma \ref{free_iff_dominant}.

Since members of $\tilde{T}$ contain at least two general points, bend-and-break (\cite{Lehmann2023}*{Lemma 4.1}) shows that $l\ge 2$. 
Thus it is enough to show that $m = n = 0$. 
Indeed, if we find a union of $l\ge 3$ free curves on $\tilde{X}$, then by smoothing $l-1$ components of them, we obtain a union of two free curves on $\tilde{X}$. 
Moreover, since free curves are disjoint from the exceptional locus of $\phi$, the image by $\phi$ is also a union of two free curves on $X$, as required. 
We prove the claim dividing into two cases. 
\begin{enumerate}[(I)]
    \item Suppose $a=b$. 
    Since any non-free rational curve of anticanonical degree at most $a+b+2$ is contained in a proper closed subset by Theorem \ref{Kollar dom iff free}, general points $p_{1}, \dots, p_{a}$ can only meet with free curves $\tilde{g}(C_{i})$. 
    Suppose the normal sheaf of the general deformation of $\tilde{g}|_{C_{i}}$ has the form $\mathcal{O}(a_{i}) \oplus \mathcal{O}(b_{i}) \oplus (\mbox{torsion part})$, where $b_{i}\ge a_{i}\ge 0$. 
    Then each $\tilde{g}(C_{i})$ contains at most $a_{i}+1$ general points. 
    Hence we obtain that 
    \begin{align*}
        a \le \sum_{i=1}^{l}(a_{i}+1)
         &\le \frac{1}{2}\sum_{i=1}^{l}(a_{i}+b_{i}+2)\\
         &= \frac{1}{2}\sum_{i=1}^{l}(-K_{\tilde{X}}\cdot \tilde{g}_{*}C_{i})\\
         &= \frac{1}{2}\sum_{i=1}^{l}(-K_{X}\cdot g_{*}C_{i})\\
         &\le \frac{1}{2}(-K_{X}\cdot g_{*}\bar{C})
         = \frac{1}{2}(a+b+2)
         = a+1. 
    \end{align*}
    In particular, we see that $m\le 2$ and $\sum_{j=1}^{m}(-K_{X}\cdot g_{*}D_{j}) \le 2$. 
    
    Suppose that $m+n>0$. 
    If $n>0$, then $m>0$ since any free curve cannot meet the exceptional locus. 
    Hence, in any case, we find a non-free component $D_{j}$. 
    
    We first assume that the general member $Q$ of the basepoint free family $p$ meets with $\tilde{g}(C_{i})$ for some $1\le i\le l$, say $i=1$. 
    Then by the inequality, we see that $m=1$, $-K_{X}\cdot g_{*}D_{1} = 1$, and general points $p_{1},\dots, p_{a}$ and the curve $Q$ determine finitely many possibilities of $\tilde{g}|_{C_{i}}$. 
    Hence all $\tilde{g}(C_{i})$ are disjoint. 
    Since $l\ge 2$, the $-K_{X}$-line $g(D_{1})$ must meet with $g(C_{i})$ for all $1\le i\le l$. 
    Then the space of deformations of $g(D_{1})$  must have dimension at least $l> 1$. 
    By Corollary \ref{nondom}.(1), $g(D_{1})$ deforms in an E5 divisor, which contradicts the assumption. 

    We next assume that the general member $Q$ of the basepoint family $p$ is disjoint from all $\tilde{g}(C_{i})$. 
    Then $Q$ must meet with $\tilde{g}(D_{j})$ for some $1\le j \le m$, say $j=1$. 
    Note that $\bigcup_{j=1}^{m}\tilde{g}(D_{j})$ must meet $Q$ and all $\tilde{g}(C_{i})$. 
    \begin{enumerate}[(i)]
        \item Assume that $m=1$ and $-K_{X}\cdot g_{*}D_{1} = 1$. 
        Then at most one $\tilde{g}(C_{i})$ can deform in dimension $1$ fixing general $a_{i}+1$ points, and any other $\tilde{g}(C_{i'})$ cannot deform fixing general $a_{i'}+1$ points. 
        Hence the space of deformations of $g(D_{1})$ must have dimension at least $1+(l-1) = l \ge 2$, which contradicts the absence of E5 divisors. 

        \item Assume that $\sum_{j=1}^{m}(-K_{X}\cdot g_{*}D_{j}) = 2$. 
        Then any $\tilde{g}(C_{i})$ cannot deform fixing general $a_{i}+1$ points.
        Then, $g(D_{1})$ is either a $-K_{X}$-line that meets with $\phi(Q)$ and at least one $g(C_{i})$, or a $-K_{X}$-conic that meets with $\phi(Q)$ and all $g(C_{i})$. 
        The former case contradicts the absence of E5 divisors again. 
        For the latter case, $g(D_{1})$ must deform in a surface listed in Corollary \ref{nondom}.(2), i.e., contractible divisors of type E3, E4, or E1 of the form $\mathbb{P}^{1}\times \mathbb{P}^{1}$. 
        However, $\phi(Q)$ cannot meet such surfaces since the basepoint free family $p$ is constructed from the pull back of a very ample divisor on $X'$. 
        Hence this case is also impossible. 
    \end{enumerate}
    Therefore, we conclude the claim when $a=b$. 

    \item Suppose $b\ge a+1$. 
    Although the argument is very similar to \cite{Beheshti2022}*{Proposition 6.5}, we give a proof for completeness. 

    Let $\tilde{Y} \subset \tilde{X}$ be the $2$-dimensional subset containing all non-free curves of anticanonical degree at most $a+b+2$. 
    Let $\tilde{\Sigma} \subset \tilde{X}$ be the surface swept out by stable maps that pass through the general $a+1$ points. 
    Since $\tilde{\Sigma}$ contains general points, we see that the intersection $\tilde{Y}\cap \tilde{\Sigma}$ is a proper closed set of $\tilde{\Sigma}$. 
    Hence general members $Q_{1}, \dots, Q_{b-a-1}$ of the basepoint free family $p$ meet with $\tilde{\Sigma}$ outside of $\tilde{Y}\cap \tilde{\Sigma}$. 
    Since $\tilde{g}(D_{j})$ and $\tilde{g}(E_{k})$ are contained in $\tilde{Y}\cap \tilde{\Sigma}$, we see that $Q_{1}, \dots, Q_{b-a-1}$ can only meet with free curves $\tilde{g}(C_{i})$. 
    Suppose each $\tilde{g}(C_{i})$ meets with $\alpha_{i}$ general points and $\beta_{i}$ general members of the basepoint free family $p$. 
    Then by \cite{Beheshti2022}*{Lemma 6.4}, we obtain that 
    \begin{align*}
        a + b + 1 \le \sum_{i=1}^{l}(2\alpha_{i} + \beta_{i})
         &\le \sum_{i=1}^{l}(-K_{\tilde{X}}\cdot \tilde{g}_{*}C_{i})\\
         &= \sum_{i=1}^{l}(-K_{X}\cdot g_{*}C_{i})\\
         &\le -K_{X}\cdot g_{*}\bar{C} = a + b + 2. 
    \end{align*}
    If $m+n>0$, then $m=1$ and $g(D_{1})$ is a $-K_{X}$-line. 
    In this case, general points $p_{1},\dots, p_{a}$ and general members $Q_{1},\dots, Q_{b-a-1}$ of the basepoint free family $p$ determine finitely many possibilities of $\tilde{g}|_{C_{i}}$. 
    Thus, all $\tilde{g}(C_{i})$ are disjoint. 
    Then as discussed above, the $-K_{X}$-line $g(D_{1})$ must deform in dimension $l>1$ to meet with all $g(C_{i})$, which contradicts the assumption. 
    Therefore, we have $m=n=0$, completing the proof. 
\end{enumerate}
\end{proof}

\section{Geometric Manin's Conjecture}
In this section, we prove Geometric Manin's Conjecture for a couple of examples.
\subsection{Statement of Geometric Manin's Conjecture}
Firstly, we will formulate Geometric Manin's Conjecture.
\begin{dfn}\label{breaking}
Let $X$ be a $\mathbb{Q}$-factorial terminal uniruled projective variety and let $L$ be a nef and big divisor on $X$.
Let $f\colon Y\rightarrow X$ be a generically finite morphism onto its image. 
\begin{enumerate}[(1)]
\item We say that $f$ is breaking with respect to $L$ if one of the following holds:
\begin{enumerate}[(i)]
\item $a(Y, f^{*}L) > a(X, L)$, 
\item $f$ is an $a$-cover with $\kappa(Y, K_{Y}-a(Y, f^{*}L)f^{*}L) > 0$, or 
\item $f$ is face contracting, i.e., $f$ is an $a$-cover such that the induced map $f_{*}\colon F(Y, f^{*}L)\rightarrow F(X, L)$ is not injective.
\end{enumerate}
\item A component $M$ of $\overline{\mathrm{Rat}}(X)$ is accumulating if there is a breaking morphism $f\colon Y\rightarrow X$ with respect to $-K_{X}$ and a component $N \subset \overline{\mathrm{Rat}}(Y)$ such that $f$ induces a dominant map $f_{*}\colon N\dashrightarrow M$.
\item If $M\subset \overline{\mathrm{Rat}}(X)$ is not accumulating, then $M$ is called a Manin component.
\end{enumerate}
\end{dfn}
\begin{conj}[Geometric Manin's Conjecture, \cite{Tanimoto2021}]\label{GMC}
Let $X$ be a $\mathbb{Q}$-factorial terminal weak Fano variety.
Then there exist a constant $c\in \mathbb{Z}$ and a nef curve class $\alpha \in \mathrm{Nef}_{1}(X)$ such that for any curve class $\beta \in \alpha + \mathrm{Nef}_{1}(X)$, there are exactly $c$ Manin components in $\overline{\mathrm{Rat}}(X, \beta)$.
\end{conj}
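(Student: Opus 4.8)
We now turn to proving Geometric Manin's Conjecture for the del Pezzo threefolds of Theorem \ref{GMC_dP}; the plan is to combine the structural input of \S\S3--5 with a hands-on study of lines and conics. First I would normalize the setup. Since $X$ is factorial, $\Pic(X)=\mathrm{Cl}(X)=\mathbb{Z}\cdot H$, so $\rho(X)=1$, $N_{1}(X)\cong\mathbb{R}$, every curve class equals $d\ell$ for the primitive class $\ell$ with $H\cdot\ell=1$, and $-K_{X}=2H$, so a $d\ell$-curve has anticanonical degree $2d$. Because $\rho(X)=1$ and Gorenstein terminal threefolds have no small contractions, $X$ admits no birational contraction at all, in particular no E5 contraction. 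By the classification of del Pezzo threefolds, $H^{3}\ge 3$ forces $H^{3}\in\{3,4,5\}$: $X$ is a (terminal factorial) cubic hypersurface in $\mathbb{P}^{4}$, a complete intersection of two quadrics in $\mathbb{P}^{5}$, or the quintic del Pezzo threefold $V_{5}$.

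Next I would pin down the accumulating components. By Proposition \ref{NonDom-highA}, a non-dominant component of $\overline{\mathrm{Rat}}(X)$ has image a surface $Y$ with $a(Y,-K_{X})>1$; a smooth resolution $\tilde{Y}\rightarrow Y\hookrightarrow X$ is then a breaking morphism of type (i) dominating that component, so every non-dominant component of $\overline{\mathrm{Rat}}(X)$ is accumulating. Hence the Manin components are dominant, thus free by Lemma \ref{free_iff_dominant}, and I may work inside $\overline{\mathrm{Free}}(X,d\ell)$. Since $X$ is factorial, has no E5 contraction, and $H^{3}\ge 2$, Theorem \ref{a-covers} shows the only $a$-covers have Iitaka dimension $2$, and by Lemma \ref{conicbdl} these are birationally base changes of the family of $-K_{X}$-conics, i.e.\ of $H$-lines. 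A short dimension count (using that $K_{Y}-f^{*}K_{X}$ is effective) shows such a breaking morphism dominates precisely the component of $\overline{\mathrm{Free}}(X,d\ell)$ whose general member is a multiple cover of an $H$-line --- curves with $f^{*}T_{X}=\mathcal{O}(2d)\oplus\mathcal{O}^{\oplus 2}$, forming one component of top dimension $2d$ by Proposition \ref{multiple_covers} --- while a general curve that is birational of degree $\ge 2$ onto its image does not lift to a rational curve on any such cover. One also notes there are no face-contracting $a$-covers, since $\mathrm{Nef}_{1}(X)$ is a ray. Thus the Manin components of degree $d$ are exactly the components of $\overline{\mathrm{Free}}(X,d\ell)$ other than the multiple-cover-of-$H$-line component, and the theorem reduces to: for every $d\ge 2$, there is a unique such component.

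For the base cases $d=1,2$ I would argue by hand, one value of $H^{3}$ at a time. For $d=1$ the Hilbert scheme of $H$-lines is irreducible --- the Fano surface for cubics, an abelian surface (the Jacobian of the associated genus-$2$ curve) for the $(2,2)$-complete intersections, and $\mathbb{P}^{2}$ for $V_{5}$ --- and the new input is that irreducibility survives the isolated cDV singularities, verified by analyzing the incidence of lines with the singular points. For $d=2$ one shows the locus of irreducible $H$-conics is irreducible of the expected dimension $4$ (again case by case, exploiting the projection and conic-bundle structures), so that $\overline{\mathrm{Free}}(X,2\ell)$ is the union of this Manin component and the multiple-cover-of-line component. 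For $d\ge 3$ one inducts: since the anticanonical degree $2d\ge 6\ge 5$, Movable Bend-and-Break (Theorem \ref{MBB_noE5}) shows every component of $\overline{\mathrm{Free}}(X,d\ell)$ contains a stable map $C_{1}\cup C_{2}\rightarrow X$ with each $C_{i}$ a free curve of degree $d_{i}$, $d_{1}+d_{2}=d$; by induction each $C_{i}$ lies in the unique Manin component or the multiple-cover component of its degree. Conversely, gluing general free curves from prescribed components at a general point and smoothing (unobstructed, since the pieces are free) yields a free degree-$d$ curve; connectedness of the parameter space of such two-component free chains shows the resulting component depends only on the input degrees, and a bookkeeping check --- using that some $d_{i}\ge 2$ and that a smoothing of a chain not supported entirely on $H$-lines is not of multiple-cover type --- identifies it with a single Manin component. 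With Movable Bend-and-Break this forces a unique Manin component in degree $d$, i.e.\ $c=1$; the translation back to $\mathrm{Hom}(\mathbb{P}^{1},X,d)$ is then immediate.

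The main obstacle is the base case: proving irreducibility of the spaces of $H$-lines and, above all, of $H$-conics on the \emph{singular} factorial cubic threefolds and $(2,2)$-complete intersections, where one must rule out spurious components of conics concentrated near the cDV points and check that the Fano scheme of lines stays irreducible. A secondary difficulty is the ``glue and smooth, independently of all choices'' step of the induction: one must guarantee the smoothed curve is genuinely free, that the space of free two-component chains into $X$ is connected, and that Manin versus accumulating components are correctly separated at every stage --- the delicate point being the precise location of the multiple-cover-of-$H$-line locus inside $\overline{\mathrm{Free}}(X,d\ell)$.
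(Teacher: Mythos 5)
The statement you were given is the general conjecture, which the paper does not prove (and which remains open); what the paper establishes is the special case recorded as Theorem \ref{GMC_dP}, and your proposal correctly targets exactly that case. For that case your strategy is essentially the paper's: use Theorem \ref{highA} and Proposition \ref{NonDom-highA} to dispose of non-dominant components (in fact the paper observes that, since $-K_{X}=2H$ forces the minimal anticanonical degree to be $2$ and $\rho(X)=1$ excludes contractible divisors, there are \emph{no} subvarieties with $a(Y,-K_{X})>1$ at all), use Theorem \ref{a-covers} and Lemma \ref{conicbdl} to see that the only breaking morphisms are base changes of the universal family of $H$-lines, so that the multiple-cover components $N_{d}$ are the accumulating ones, and then run an induction on $d$ via Movable Bend-and-Break with base cases $d=1,2$. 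The one substantive caveat is that you defer precisely the content that carries the paper's new work: the irreducibility of the Fano surface of lines and of the space of conics on the \emph{singular} factorial cubic and quartic del Pezzo threefolds, which the paper proves by projecting from a singular point $q$ and constructing explicit birational parametrizations ($\mathrm{Sym}^{2}(C)\dashrightarrow F(X)$ for the cubic, a conic-bundle restriction $\mathcal{U}_{C}\rightarrow F(X)$ for the quartic, and incidence varieties of planes for the conics), together with a factoriality argument ruling out reducible cone sections. Since you flag these as the main obstacles rather than supplying them, your write-up is a faithful outline of the paper's proof of Theorem \ref{GMC_dP} rather than a complete argument, and of course neither it nor the paper proves Conjecture \ref{GMC} in the stated generality. (A minor point: you include the case $H^{3}=5$, but the paper notes that factorial del Pezzo threefolds with $H^{3}\ge 5$ are automatically smooth, so only $H^{3}=3,4$ require new work.)
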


\begin{rem}\label{GMC_known}
    Geometric Manin's Conjecture is known for the following cases: 
    homogeneous varieties (\cite{Kim2001}, \cite{Thomsen1998}), 
    toric varieties (\cite{Bourqui2012}, \cite{Bourqui2016}), 
    smooth hypersurfaces of low degree (\cite{Harris2004}, \cite{Coskun2009}, \cite{Beheshti2013}, \cite{Browning2017}, \cite{Riedl2019}), 
    del Pezzo surfaces (\cite{Testa2005}, \cite{Testa2009}, \cite{Beheshti2023}), 
    smooth Fano threefolds (\cite{Lehmann2019}, \cite{Lehmann2021a}, \cite{Shimizu2022}, \cite{Beheshti2022}, \cite{Burke2022}), 
    smooth del Pezzo varieties (\cite{Coskun2009}, \cite{Lehmann2019}, \cite{Shimizu2022}, \cite{Okamura2022a}), and 
    del Pezzo fibrations (\cite{Lehmann2022a}, \cite{Lehmann2023}).
    \par
    The methods used in the first two cases (homogeneous varieties and toric varieties) are different from ours.
    We mainly follow the methods developed in the papers such as \cite{Harris2004}, \cite{Coskun2009}, \cite{Lehmann2019}, \cite{Beheshti2022}.
\end{rem}
%
% \begin{rem}
%     One of the simplest examples of singular Fano threefolds is the cone $X \subset \mathbb{P}^{4}$ over a smooth quadric surface.
%     Note that $X$ is a Gorenstein terminal Fano threefold of index $3$, but not $\mathbb{Q}$-factorial.
%     Hence we do not formulate Geometric Manin's Conjecture for $X$.
%     The $\mathbb{Q}$-factorialization $\bar{X} \rightarrow X$ is isomorphic to the $\mathbb{P}^{1}$-bundle $\mathbb{P}_{\mathbb{P}^{2}}(\mathcal{O}\oplus \mathcal{O}(2))$, and Geometric Manin's Conjecture holds for $\bar{X}$ by \cite{Burke2022}.
% \end{rem}
%
%\subsection{Geometric Manin's Conjecture for a quadric cone}
%The simplest example of Gorenstein terminal (non-smooth) Fano threefolds would be the quadric cone $Q$, i.e., it can be written as 
%\[Q = V(x_{1}x_{2} + x_{3}x_{4}) \subset \mathbb{P}^{4}_{(x_{0}, \dots x_{4})},\] 
%which has singularity of type $cA_{1}$ at the vertex $(1:0:0:0:0)$.
%
\subsection{Geometric Manin's Conjecture for del Pezzo threefolds}
In this subsection, we will prove Geometric Manin's Conjecture for cubic and quartic del Pezzo threefolds with terminal factorial singularities.
We recall the definition of del Pezzo varieties.
\begin{dfn}[\cite{Kuznetsov2023}]\label{Def_dP}
    We say that a projective variety $X$ is del Pezzo if $X$ is terminal and there exists an ample Cartier divisor $H$ such that $-K_{X} = (\dim X - 1)H$.
    Such a divisor $H$ is called the fundamental divisor on $X$.
\end{dfn}
Cubic hypersurfaces and complete intersections of two quadrics with terminal singularities are examples of del Pezzo varieties.
See \cite{Kuznetsov2023} for a detailed classification of del Pezzo varieties.
\begin{rem}
In \cite{Iskovskikh1999}*{Definition 3.2.1}, they have defined del Pezzo varieties without the condition on terminal singularities.
Since in this paper, we consider Gorenstein terminal Fano threefolds, we assume that any del Pezzo variety has at worst terminal singularities.
\end{rem}
We now consider the spaces of low degree curves on a cubic threefold.
%We first begin with the definition of del Pezzo varieties.
%We follow the notation of %Kuznetsov and Prokhorov's paper.
%\begin{dfn}[]
%Let $X$ be an $n$-dimensional Fano variety with at worst terminal singularities.
%Then we say that $X$ is a del Pezzo varieties if one can write as 
%\[-K_{X} = (n-1)H\]
%for some ample Cartier divisor.
%\end{dfn}
%Del Pezzo varieties are clearly Gorenstein, hence one can apply the previous arguments.
%There is a well known result of the classification of smooth del Pezzo varieties [\cite{Iskovskikh1999} Theorem 3.3.1].
%Moreover, Kuznetsov and Prokhorov classified the 
%
\begin{thm}\label{Lines_Cubic}
Let $X$ be a cubic threefold with terminal factorial singularities and let $H$ be the fundamental divisor on $X$.
Then the space $\overline{M}_{0,0}(X, 1)$ parametrizing $H$-lines is irreducible of dimension $2$.
\end{thm}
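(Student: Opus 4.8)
The plan is to analyze the Fano scheme of lines $F(X)$ on the cubic threefold $X \subset \mathbb{P}^4$ directly, using the classical theory of lines on cubic hypersurfaces adapted to the singular case. First I would recall that for a smooth cubic threefold, $F(X)$ is a smooth irreducible surface (the Fano surface of Clemens--Griffiths), so the content of the theorem is entirely in controlling what happens in the presence of the finitely many terminal (hence isolated cDV) singular points. Since $X$ is terminal and factorial, the singular locus is a finite set of points, and I would stratify $F(X)$ according to whether a line avoids $\mathrm{Sing}(X)$ or passes through a singular point. The open locus $F^{\circ}(X)$ of lines avoiding the singular locus is smooth of the expected dimension $2$ by the usual normal bundle computation ($N_{\ell/X}$ has degree $0$ and $h^1 = 0$ generically, since $-K_X \cdot \ell = 2$ gives the expected dimension $2$ via Proposition \ref{Homdim_Gor}), and its irreducibility follows from a monodromy/incidence argument or by degenerating from the smooth case. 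The remaining task is to show that lines through a singular point $p$ form a closed subset of dimension at most $2$ that does not constitute a separate component, i.e., every such line is a limit of lines in $F^{\circ}(X)$.

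The key steps, in order, would be: (1) Set up the incidence correspondence $\{(\ell, x) : x \in \ell \subset X\} \subset \mathbb{G}(1,4) \times X$ and show the generic line has balanced normal bundle, giving $\dim F^{\circ}(X) = 2$ and smoothness there. (2) Prove irreducibility of $F^{\circ}(X)$: one clean way is to use that the universal line over $F^{\circ}(X)$ dominates $X$ (since $X$ is covered by lines --- a cubic threefold is covered by lines through each point, in fact each point lies on a curve of lines), combined with the connectedness coming from a specialization argument from a smooth cubic threefold in a family, since terminal factorial cubic threefolds deform to smooth ones and $F(X)$ varies in a flat family. (3) For each singular point $p \in \mathrm{Sing}(X)$, analyze the lines through $p$: projecting from $p$ exhibits these as parametrized by the intersection of $X$ with a hyperplane/quadric cone structure near $p$; since $p$ is a cDV point, the cone of lines through $p$ has dimension at most $1$ (a curve of lines through each singular point), so $\dim (F(X) \setminus F^{\circ}(X)) \le 2$, and I would then show such lines are smoothable, i.e., lie in the closure of $F^{\circ}(X)$, by deforming the line off the singular point --- the relevant $H^1$ obstruction vanishes because $X$ is lci and the expected dimension bound $\dim_{[\ell]} F(X) \ge 2$ from Proposition \ref{Homdim_Gor} forces $[\ell]$ to be a limit of free deformations. (4) Conclude $F(X) = \overline{F^{\circ}(X)}$ is irreducible of dimension $2$, and translate back to $\overline{M}_{0,0}(X,1)$, noting that a stable map of anticanonical degree $2$ hitting class $H$ with $H$-degree $1$ must have irreducible domain mapping birationally (no multiple-cover or reducible degenerations are possible at degree $1$), so $\overline{M}_{0,0}(X,1) \cong F(X)$.

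The main obstacle I anticipate is step (3): controlling the lines through the singular points and proving they do not form an extra component. There are two sub-issues. First, one must bound the dimension of the family of lines through a fixed cDV point $p$ --- for an ordinary node ($cA_1$) this is classical (a conic's worth of lines, dimension $1$), but for general $cA_n$, $cD_n$, $cE_n$ singularities one needs the local equation to see that the "cone of lines" cut out in the projectivized tangent cone has dimension $\le 1$; this should follow because the tangent cone at a cDV point is a (possibly degenerate) quadric cone or cubic cone in $\mathbb{P}^3$, and lines through $p$ on $X$ correspond to lines in $\mathbb{P}^3$ lying on this cone together with a higher-order vanishing condition, cutting the dimension down appropriately. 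Second, and more delicate, is showing smoothability: a line $\ell$ through $p$ has $\dim_{[\ell]}\overline{M}_{0,0}(X,1) \ge 2$ by Proposition \ref{Homdim_Gor}, but one must rule out that a whole $2$-dimensional family of lines concentrates through singular points --- here I would invoke that $\mathrm{Sing}(X)$ is finite, so lines through singular points sweep out at most a surface (union of cones), and if this surface equals a component of $F(X)$ one derives a contradiction with the covering property from step (2), or alternatively one checks directly that the general line through $p$ meets the smooth locus and deforms, using that the cDV point imposes only finitely many lines that are "rigid at $p$." If the direct approach proves troublesome, the fallback is the deformation-to-the-smooth-cubic argument: take a flat family $\mathcal{X} \to T$ with $\mathcal{X}_0 = X$ and general fiber smooth, let $\mathcal{F} \to T$ be the relative Fano scheme (flat since the Hilbert polynomial of lines is constant), and use that $\mathcal{F}_t$ is irreducible of dimension $2$ for general $t$ together with upper semicontinuity of fiber dimension and connectedness of $\mathcal{F}_0$ (via Zariski's connectedness theorem, since $\mathcal{F} \to T$ is proper with connected general fiber and $\mathcal{F}$ irreducible) to force $\mathcal{F}_0 = F(X)$ irreducible of dimension $2$.
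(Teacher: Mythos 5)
There is a genuine gap at the heart of your step (2)/fallback: connectedness does not give irreducibility, and nothing in your argument uses factoriality in an essential way to exclude extra two-dimensional components of $F(X)$. Your deformation argument (relative Fano scheme $\mathcal{F}\to T$ over a pencil degenerating a smooth cubic to $X$, plus Zariski/Stein connectedness) proves at best that $F(X)$ is \emph{connected}; a connected surface can perfectly well be a union of two $2$-dimensional components meeting along a curve (for instance, along the curve of lines through a singular point). That this is not a hypothetical worry is shown by terminal but non-factorial cubic threefolds, e.g.\ a general cubic containing a plane: it has only nodes, it is a degeneration of smooth cubics, its Fano scheme is connected, and yet $F(X)$ is reducible (the lines in the plane form a separate $2$-dimensional component). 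Since your argument applies verbatim to such an $X$, it cannot be correct as stated. The "covering property'' you invoke only rules out components consisting entirely of lines through singular points (which indeed form a curve, since the lines through a fixed point of an irreducible cubic are parametrized by a curve $V(F,G)\subset\mathbb{P}^3$); it does not rule out two distinct dominating components splitting the six lines through a general point as $3+3$. The subsidiary claim that $\mathcal{F}\to T$ is flat "since the Hilbert polynomial of lines is constant'' is also off the mark (flatness concerns the Hilbert polynomials of the fibers $\mathcal{F}_t$, not of the lines), though this is minor compared to the main issue.

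For contrast, the paper's proof makes factoriality do the real work. It projects from a singular point $q$, writes $X=V(x_0F+G)$, and identifies $F(X)$ birationally with $\mathrm{Sym}^2(C)$ where $C=V(F,G)\subset\mathbb{P}^3$ is the degree-$6$, genus-$4$ curve parametrizing lines through $q$ (a pair of points of $C$ spans with $q$ a plane whose residual intersection with $X$ is the third line). Irreducibility of $C$ is then forced: a component of $C$ of degree $d$ gives a cone over it inside $X$, which by factoriality must have degree divisible by $3$; the only possible splitting $3+3$ is excluded by a genus count combined with Prokhorov's bound $|\mathrm{Sing}(X)|\le 5$ and the resulting $cA_1$ normal form. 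If you want to salvage your approach, you must inject an argument of this kind — some use of $\operatorname{Pic}(X)=\mathbb{Z}H$ to bound the surfaces swept out by putative components of $F(X)$ — rather than relying on connectedness of a degeneration.
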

\begin{proof}
When $X$ is smooth, then Theorem \ref{Lines_Cubic} is proved in \cite{AltmanKleiman1977Fanoscheme}. 
Hence we consider the singular case. 

Choose a singular point $q\in X$ and we may assume that $q = (1:0:0:0:0)$.
Then $X$ is written as $X = V(x_{0}F + G)$, where $F\in k[x_{1},\dots, x_{4}]_{2}$ and $G\in k[x_{1},\dots, x_{4}]_{3}$.
Let $S$ be the surface swept out by lines through $q$, which is a cone of degree $6$ with the vertex $q$, in particular, $S = V(F)\cap X \in |2H|$.
Consider the image $C = \pi(S)$, where $\pi\colon \mathbb{P}^{4}\dashrightarrow \mathbb{P}^{3}$ is the projection from $q$.
Then $C = V(F, G)$ is a curve of degree $6$ and genus $4$.
Now, we consider the rank of the quadric form $F$. 
Since $X$ is factorial with isolated cDV singularities, the rank of $F$ is at least $2$ by \cite{MarquandViktorova2023defect}*{Proposition 3.4}. 
Then by \cite{MarquandViktorova2023defect}*{Theorem 1.1}, $C$ is irreducible when the rank of $F$ is greater than $2$, and $C$ has two irreducible components when the rank of $F$ is $2$. 
We then show the irreducibility of $\overline{M}_{0,0}(X, 1)$.
It is clear that $\overline{M}_{0,0}(X, 1)$ is isomorphic to the Fano surface $F(X)$ of lines.
% So, it is enough to show the irreducibility of $F(X)$. 
We study the irreducibility of $F(X)$ depending on the rank of $F$. 
\begin{enumerate}[(1)]
\item Suppose that the rank of $F$ is greater than $2$, i.e., $C$ is irreducible. 
We claim that there is a birational map $\psi \colon \mathrm{Sym}^{2}(C) \dashrightarrow F(X)$.
Since $C$ is irreducible, so is $\mathrm{Sym}^{2}(C)$.
Any element of $\mathrm{Sym}^{2}(C)$ except for elements corresponding to singularities of $C$ defines a line $L$ on $\mathbb{P}^{3}$ i.e., for $(p_{1}, p_{2}) \in \mathrm{Sym}^{2}(C)$, if $p_{1}\neq p_{2}$, then $L$ is the line joining $p_{1}$ and $p_{2}$, and if $p_{1} = p_{2}$, then $L$ is the tangent line to $p_{1}$.
Let $P$ be the plane in $\mathbb{P}^{4}$ spanned by $L$ and $q$.
Then the intersection $X\cap P$ consists of three lines, two of which are the lines passing through $q$ and $p_{i}$ ($i\in \{1,2\}$).
So we define $\psi(p_{1}, p_{2})$ to be the residual line.
The inverse of $\psi$ is constructed as follows: 
 take a line $\ell \in F(X)$ not passing through $q$.
Since the cone $S = V(F, G)$ is a divisor in $|2H|$, where $H$ is the hyperplane class on $X$, hence $S\cdot \ell = 2$.
Then we see that $\psi^{-1}(\ell)$ is an element of $\mathrm{Sym}^{2}(C)$ corresponding to the image $\pi(S\cap \ell)$.
Therefore $\psi$ is birational and hence $F(X)$ is irreducible.

\item Suppose that the rank of $F$ is $2$, i.e., $C$ is a union of two irreducible curves $C_{1}, C_{2}$. 
Since $V(F) \subset \mathbb{P}^{3}$ consists of two distinct planes $P_{1}, P_{2}$, each $C_{i}$ is a plane cubic curve contained in $P_{i}$. 
Even in this case, one can define a map $\psi\colon \mathrm{Sym}^{2}(C)\dashrightarrow F(X)$ as in (1). 
For $i=1,2$, consider the composition $\psi_{i}\colon \mathrm{Sym}^{2}(C_{i})\hookrightarrow \mathrm{Sym}^{2}(C)\dashrightarrow F(X)$. 
For $(p_{1},p_{2})\in \mathrm{Sym}^{2}(C_{i})$, the line $L$ joining $p_{1}$ and $p_{2}$ is contained in the plane $P_{i}$. 
Since $C_{i}\subset P_{i}$ is a plane cubic curve, we have another point $p_{3}$ in $C_{i}\cap L$ other than $p_{1}, p_{2}$. 
Then we have $\psi_{i}\colon \mathrm{Sym}^{2}(C_{i})\ni(p_{1},p_{2}) \mapsto p_{3}\in C_{i}\subset F(X)$ regarding $C$ as the set of lines through $q$. 
Lastly, we consider the composition $\phi\colon C_{1}\times C_{2}\hookrightarrow C\times C\twoheadrightarrow \mathrm{Sym}^{2}(C) \dashrightarrow F(X)$. 
Combining with the fact that $\psi_{i}$ cannot dominate any component of $F(X)$, one can prove that $\phi$ is a birational map as discussed in (1). 
Since $C_{1}\times C_{2}$ is irreducible, so is $F(X)$, completing the proof. 
\end{enumerate}
\end{proof}
\begin{thm}\label{Conics_Cubic}
Let $X$ be a cubic threefold with terminal factorial singularities and let $H$ be the fundamental divisor on $X$.
Then the space $\overline{M}_{0,0}(X, 2)$ parametrizing $H$-conics has two irreducible components.
One component generically parametrizes free birational stable maps, and the other component parametrizes double covers of $H$-lines.
Both components have the expected dimension $4$.
\end{thm}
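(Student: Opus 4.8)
The plan is to exhibit the two expected irreducible components of $\overline{M}_{0,0}(X,2)$ explicitly and then check that they exhaust the space. Throughout I will use that $-K_{X}=2H$ for a cubic threefold, so an $H$-conic class $\beta$ has $-K_{X}\cdot\beta=4$ and the expected dimension of $\overline{M}_{0,0}(X,2)$ is $\dim X-3+(-K_{X}\cdot\beta)=4$; I will also use that, $X$ being factorial, $\mathrm{Cl}(X)=\mathrm{Pic}(X)=\mathbb{Z}H$, and in particular $X$ contains no plane (a plane $P\subset X$ would be a Cartier divisor with $P\cdot H^{2}=1$, forcing $3m=1$).

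First I would describe the component $M_{2}$ of double covers of $H$-lines. Let $F(X)\cong\overline{M}_{0,0}(X,1)$ be the Fano scheme of $H$-lines, which is irreducible of dimension $2$ by Theorem \ref{Lines_Cubic}. Over $F(X)$ and its universal line there is a natural family whose fibre over $[\ell]$ is $\overline{M}_{0,0}(\ell,2)\cong\overline{M}_{0,0}(\mathbb{P}^{1},2)$; since the latter is irreducible of dimension $2$, this family is irreducible of dimension $4$, and the induced morphism to $\overline{M}_{0,0}(X,2)$ is generically injective (from a genuine double cover one recovers its image line and then the covering map). Hence the closure $M_{2}$ of the locus of double covers of $H$-lines is an irreducible closed subset of dimension $4$, a dense subset of which parametrizes double covers of $H$-lines.

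Next I would describe the component $M_{1}$ of birational conics. Every $H$-conic $Q\subset X$ spans a plane $P=\langle Q\rangle$, and since $X$ contains no plane we have $P\not\subset X$, so $P\cap X=Q+\ell_{Q}$ for a residual $H$-line $\ell_{Q}$; conversely, for a line $\ell\subset X$ and a general plane $P\supset\ell$, the residual of $\ell$ in $P\cap X$ is a smooth conic. This gives a birational map from the $\mathbb{P}^{2}$-bundle $\{(\ell,P):\ell\subset X\text{ a line},\ P\supset\ell\}$ over $F(X)$ onto the Hilbert scheme of $H$-conics, so the locus of smooth conics is irreducible of dimension $2+2=4$. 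A stable map birational onto a smooth conic is an isomorphism onto it, hence identifies with the conic; thus this locus is precisely the corresponding locus in $\overline{M}_{0,0}(X,2)$, and I set $M_{1}$ to be its closure, irreducible of dimension $4$, with general member a birational stable map onto a smooth conic. I would then check $M_{1}$ is a dominant component: through a general point $x\in X$ there is a line $\ell\subset X$ with $x\in\ell$, which necessarily lies in the embedded tangent hyperplane $\mathbb{T}_{x}X$, and every plane with $\ell\subset P\subset\mathbb{T}_{x}X$ meets $X$ in $\ell$ together with a conic through $x$ (since $P\cap X$ is singular at $x$), which for general such $P$ is smooth; so conics cover $X$. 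Alternatively, if $M_{1}$ were non-dominant its evaluation image would be a surface $Y$ with $a(Y,-K_{X})>1$ by Proposition \ref{NonDom-highA}, hence one of the surfaces of Theorem \ref{highA}; but on a factorial cubic threefold each is excluded, because $-K_{X}$-lines have $H$-degree $\tfrac12$, the cases $(\mathbb{P}^{2},\mathcal{O}(1))$ and $(Q,\mathcal{O}(1))$ force $H^{2}\cdot Y=\frac14(-K_{X}|_{Y})^{2}\in\{\tfrac14,\tfrac12\}$, and the cases $(\mathbb{P}^{2},\mathcal{O}(2))$ and the non-normal Veronese case force $H^{2}\cdot Y=1$, i.e.\ $Y$ a plane (impossible by factoriality, and in the non-normal case also because a plane is normal), while the fibration/quartic alternatives of Theorem \ref{highA}(3) need $\rho(X)\ge 2$. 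By Lemma \ref{free_iff_dominant}, $M_{1}$ therefore generically parametrizes free stable maps.

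Finally I would show $\overline{M}_{0,0}(X,2)=M_{1}\cup M_{2}$. A degree-$2$ stable map with irreducible domain is either birational onto a smooth conic (so lies in $M_{1}$) or a double cover of an $H$-line (so lies in $M_{2}$). A reducible genus-$0$ degree-$2$ stable map must be $\ell_{1}+\ell_{2}$ with $\ell_{1}\cap\ell_{2}\neq\emptyset$; if $\ell_{1}=\ell_{2}$ it is a boundary point of $\overline{M}_{0,0}(\ell_{1},2)$, hence in $M_{2}$; if $\ell_{1}\neq\ell_{2}$ then $P_{0}=\langle\ell_{1},\ell_{2}\rangle\not\subset X$, so $P_{0}\cap X=\ell_{1}+\ell_{2}+\ell_{3}$ for a line $\ell_{3}\subset X$, and deforming $P_{0}$ within the planes containing $\ell_{3}$ realizes $\ell_{1}+\ell_{2}$ as a limit of smooth conics on $X$, so it lies in $M_{1}$. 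Hence every point of $\overline{M}_{0,0}(X,2)$ lies in $M_{1}\cup M_{2}$; since $M_{1}$ and $M_{2}$ are irreducible of the same dimension $4$ with distinct general members, neither contains the other, so they are exactly the two irreducible components, both of the expected dimension $4$, with $M_{1}$ generically parametrizing free birational stable maps and $M_{2}$ parametrizing double covers of $H$-lines. The main obstacle will be the two structural facts about conics on $X$: the birational description of the Hilbert scheme of conics as a $\mathbb{P}^{2}$-bundle over $F(X)$ (which makes irreducibility of $M_{1}$ a consequence of Theorem \ref{Lines_Cubic}), and the verification that every degenerate conic $\ell_{1}+\ell_{2}$ smooths inside $X$ — both of which rest on the fact that $X$ contains no plane, i.e.\ on factoriality.
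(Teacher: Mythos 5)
Your proposal is correct and follows essentially the same route as the paper: the double-cover locus over the irreducible Fano surface $F(X)$ gives one $4$-dimensional component, and the incidence correspondence $\Sigma=\{(\ell,P):\ell\subset P\}$, a $\mathbb{P}^{2}$-bundle over $F(X)$, maps birationally via residual conics onto the rest of $\overline{M}_{0,0}(X,2)$, using factoriality to exclude planes in $X$. Your additional verifications (that reducible degree-$2$ stable maps land in one of the two loci, and that the birational component is dominant hence generically free via Proposition \ref{NonDom-highA} and Lemma \ref{free_iff_dominant}) only flesh out steps the paper leaves implicit.
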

\begin{proof}
Let $N \subset \overline{M}_{0,0}(X, 2)$ be the locus parametrizing double covers of $H$-lines.
Since the space of double covers of $\mathbb{P}^{1}$ is irreducible of dimension $2$, hence $N$ is an irreducible component of $\overline{M}_{0,0}(X, 2)$ of dimension $4$ by Theorem \ref{Lines_Cubic}.
We next consider the incidence correspondence 
\[\Sigma := \{(\ell, P)\mid \ell \in F(X), P\in \mathbb{G}(2, 4), \ell \subset P\}.\]
Then the first projection $p\colon \Sigma \rightarrow F(X)$ is a $\mathbb{P}^{2}$-bundle, hence $\Sigma$ is irreducible of dimension $4$ by Theorem \ref{Lines_Cubic}.
Note that $X$ does not contain planes since $X$ is factorial.
Let $(\ell, P)\in \Sigma$, then there is a conic $\alpha$ such that $X\cap P = \ell \cup \alpha$.
This defines a morphism $\Sigma \rightarrow \overline{M}_{0,0}(X, 2)$.
Moreover, this morphism induces a birational morphism from $\Sigma$ to $\overline{M}_{0,0}(X, 2)$ away from the component $N$ since any conic except for double lines is contained in a unique plane.
Hence the claim holds.
\end{proof}
Next, we consider the spaces of low degree curves on a quartic del Pezzo threefold, that is, a complete intersection of two quadrics in $\mathbb{P}^{5}$.
\begin{thm}\label{Lines_Quartic}
Let $X$ be a quartic del Pezzo threefold with terminal factorial singularities and let $H$ be the fundamental divisor on $X$.
Then the space $\overline{M}_{0,0}(X, 1)$ parametrizing $H$-lines is irreducible of dimension $2$.
\end{thm}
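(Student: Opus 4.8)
The plan is to mimic the proof of Theorem \ref{Lines_Cubic}: project from a singular point, analyse the cone of lines through it, and parametrize all lines of $X$ in terms of that cone. There is one genuinely new ingredient, and isolating it is the main task. First, $\overline{M}_{0,0}(X,1)$ is canonically the Fano scheme of lines $F(X)$ (a degree-one genus-zero stable map with no marked points must have irreducible domain and embed it onto a line), and $F(X) \subset \mathbb{G}(1,5)$ is cut out by the $3+3$ equations expressing that a line lies on $Q_1$ and on $Q_2$, so every component of $F(X)$ has dimension at least $8-6=2$. Choose coordinates with $q=(1:0:\dots:0)$ a singular point; since $q$ is singular we may, after replacing a member of the pencil, write $X = V(x_0 L + F_1,\, F)$ with $L$ a linear form and $F_1,F$ quadratic forms in $x_1,\dots,x_5$.

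A direct computation then shows that the lines of $X$ through $q$ are parametrized by the curve $C_q := V(F_1, F) \subset V(L) \cong \mathbb{P}^3$, a complete intersection of two quadrics of arithmetic genus $1$ and degree $4$, and that the surface $S$ they sweep out is the hyperplane section $V(L)\cap X \in |H|$, namely the cone over $C_q$ with vertex $q$. Since $X$ is factorial with $\rho(X)=1$, every effective divisor on $X$ lies in $|kH|$ for some $k\ge 1$ and hence has degree divisible by $H^3=4$; as $\deg S = 4$, the surface $S$, and so $C_q$, is reduced and irreducible. (In the cubic case $\deg S = 6 = 2\cdot 3$ left room for a $1+1$ splitting, which is why the genus computation was needed there; here the bookkeeping is immediate.)

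The new ingredient is that $S\cdot\ell = H\cdot\ell = 1$, not $2$: a general line $\ell$ of $X$ meets $S$ in a single point $x$, lying on a unique ruling line $m = \overline{qx}$ of the cone, so we obtain only a rational map $\Phi \colon F(X) \dashrightarrow C_q$ sending $\ell$ to the point of $C_q$ that indexes $m$, and $\Phi$ is not birational. Its general fibre is the family of lines of $X$ meeting a general line $m$ through $q$. A line $\ell\ne m$ meets $m$ precisely when the plane $P := \langle \ell, m\rangle$ satisfies $P\cap Q_1 = P\cap Q_2$, equivalently $P\subset Q_t$ for some member of the pencil, in which case $\ell$ is the line residual to $m$ in $P\cap Q_s$ for $s\ne t$. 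Since for general $m$ each smooth member $Q_t$ contains exactly two planes through $m$ (one from each of the two families of planes on $Q_t$) and each rank-five member exactly one, this fibre is identified with the double cover $\Gamma\to\mathbb{P}^1$ of the pencil branched along its discriminant sextic --- the hyperelliptic curve attached to $X$ --- independently of $m$. A dimension count now shows that no component of $F(X)$ is contained in a fibre of $\Phi$ or in the one-dimensional family of ruling lines, so every component dominates $C_q$ with one-dimensional general fibre dense in $\Gamma$; hence $F(X)$ is irreducible of dimension $2$, and $\overline{M}_{0,0}(X,1)\cong F(X)$, provided $\Gamma$ is irreducible.

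The remaining point, which I expect to be the main obstacle, is that $\Gamma$ is irreducible --- equivalently, that the discriminant sextic of the pencil is not a perfect square --- and this is exactly where factoriality is essential. Factoriality forces every surface in $X$ to have degree divisible by $4$, so no member of the pencil can have rank $\le 4$ (such a member would contribute a quadric-surface divisor of degree $2$); hence the multiple roots of the discriminant are confined to the finitely many rank-five members whose vertex lies on $X$, each of which is a mild singular point of $X$, and a defect / Clifford-algebra analysis shows that a factorial $X$ cannot realize the configuration forced by a perfect-square discriminant. Carrying out this last step, and carefully verifying the ruling-plane count that underlies the identification of the fibres of $\Phi$ with $\Gamma$ --- in particular its behaviour over the degenerate members of the pencil that $\mathrm{Sing}(X)$ forces --- is where the bulk of the work lies.
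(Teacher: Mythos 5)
Your overall architecture (project from a singular point $q$, show the cone $S$ of lines through $q$ is an irreducible hyperplane section, and parametrize the remaining lines by a family over the curve $C_q$) starts the same way as the paper, but the second half takes a genuinely different route: you fiber $F(X)$ over $C_q$ with fiber the hyperelliptic curve $\Gamma \to \mathbb{P}^1$ attached to the pencil of quadrics, in the spirit of the classical Reid/Desale--Ramanan picture. The problem is that you have correctly identified, but not closed, the one step on which your whole argument rests: the irreducibility of $\Gamma$, i.e.\ that the degree-$6$ discriminant $\Delta(t)=\det(M_1+tM_2)$ is not a perfect square. This is a genuine gap, and it is not a routine one. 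Each node of $X$ (a rank-$5$ member whose vertex lies on $X$) contributes a root of $\Delta$ of multiplicity $\ge 2$, so for instance a threefold with Segre symbol $[2,2,2]$ has $\Delta = \prod_{i}(t-\lambda_i)^2$, a perfect square, and $[4,2]$ gives $(t-a)^4(t-b)^2$, again a perfect square. Your argument therefore requires proving that every such configuration is non-factorial (e.g.\ in the normal form for $[2,2,2]$ one checks directly that $X$ contains the plane $y_1=y_2=y_3=0$, so $\mathrm{Cl}(X)\neq\mathbb{Z}H$), together with ruling out rank-$\le 4$ members; your parenthetical justification that a rank-$4$ member ``contributes a quadric-surface divisor of degree $2$'' is asserted, not proved, and the appeal to ``a defect / Clifford-algebra analysis'' is a placeholder rather than an argument. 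Until this Segre-symbol case analysis is carried out, the proof is incomplete, and in the cases above the statement you need is exactly as hard as a nontrivial classification fact about factorial terminal $(2,2)$-intersections.

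For comparison, the paper's proof avoids the discriminant entirely by using the singular point harder: it takes $Q_1$ to be the member of the pencil that is a rank-$5$ cone with vertex $q$, so that $Q'=\pi(Q_1)\subset\mathbb{P}^4$ is a \emph{smooth} quadric threefold. Every line $\ell\subset X$ not through $q$ projects to a line of $Q'$ meeting $C=\pi(S)$ in the single point $\pi(S\cap\ell)$ (here $S\cdot\ell=H\cdot\ell=1$ is used exactly as you use it), and conversely a pair (line of $Q'$, point of $C$ on it) determines $\ell$ as the residual line of the ruling line in the corresponding plane of $Q_1$. This exhibits $F(X)$ as birational to the restriction to the irreducible curve $C$ of the universal family of lines on $Q'$, which is a smooth conic bundle over $Q'$ and hence has irreducible restriction to $C$. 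No information about the other members of the pencil is needed. If you want to salvage your approach, the missing ingredient is precisely a proof that a terminal factorial $X$ admits no pencil member of rank $\le 4$ and no Segre symbol forcing $\Delta$ to be a square; otherwise you should switch to the planes of the cone $Q_1$ through $q$ as the paper does.
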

\begin{proof}
When $X$ is smooth, then Theorem \ref{Lines_Quartic} is proved in \cite{Reid1972CIquadrics}. 
Hence we consider the singular case. 

We fix a singular point $q \in X$ and assume $q= (1:0:\dots :0)$.
Then we may assume that $X\subset \mathbb{P}^{5}$ is the intersection of a quadric cone $Q_{1}$ with vertex $q$ and a quadric hypersurface $Q_{2}$ which is smooth at $q$.
Let $\pi\colon \mathbb{P}^{5}\dashrightarrow \mathbb{P}^{4}$ be the projection from $q$. 
Write $Q' = \pi(Q_{1})$, which is a nonsingular quadric hypersurface in $\mathbb{P}^{4}$.
As in the proof of Theorem \ref{Lines_Cubic}, we let $S$ be the surface swept out by lines through $q$, and set $C = \pi(S)$.
Then $S = T_{q}Q_{2} \cap X$ is the intersection of $X$ and the tangent hyperplane of $Q_{2}$ to $q$. 
Hence we see that $S \in |H|$ is irreducible, and so is $C$.
\par
We consider the Fano scheme $F(Q')$ of lines on $Q'$, which is irreducible of dimension $3$.
Moreover, it is isomorphic to the Fano scheme $F_{2}(X)$ of planes on $Q_{1}$.
Let $\mathcal{U}\rightarrow F(Q')$ denote the universal family and let $s\colon \mathcal{U}\rightarrow Q'$ be the evaluation map.
Then $s$ is a smooth conic fibration.
Hence $\mathcal{U}_{C} = s^{-1}(C)$ is an irreducible surface since $C$ is irreducible.
We will construct a birational morphism $\rho \colon \mathcal{U}_{C}\rightarrow F(X)$.
Take an element $(L, c)\in \mathcal{U}_{C}$.
We note that $L$ corresponds to the plane $\pi^{-1}(L)$, and that $c$ corresponds to the line $\pi^{-1}(c)$.
By the choice of $L$, $\pi^{-1}(L) \subset Q_{1}$ and $\pi^{-1}(L) \cap Q_{2}$ is a conic since $X$ does not contain planes.
Moreover, since $\pi^{-1}(c) \subset \pi^{-1}(L)\cap Q_{2}$, there is a line $\ell$ such that $\pi^{-1}(L)\cap Q_{2} = \ell \cup \pi^{-1}(c)$ (if $\pi^{-1}(L)\cap Q_{2} = \pi^{-1}(c)$ with multiplicity $2$, then we take $\ell = \pi^{-1}(c)$).
Then we define $\rho(L, c) = \ell$.
Conversely, take a line $\ell \in F(X)$ not passing through $q$.
Since $S\in |H|$, the intersection of $S$ and $\ell$ is a single point, say $p$.
Hence if we take $(\pi(\ell), \pi(p)) \in \mathcal{U}_{C}$, then it maps to the line $\ell$ by $\rho$.
Therefore $\rho$ is a birational morphism and hence $F(X)$ is irreducible.
\end{proof}
\begin{thm}\label{Conics_Quartic}
Let $X$ be a quartic del Pezzo threefold with terminal factorial singularities and let $H$ be the fundamental divisor on $X$.
Then the space $\overline{M}_{0,0}(X, 2)$ parametrizing $H$-conics has two irreducible components.
One component generically parametrizes free birational stable maps, and the other component parametrizes double covers of $H$-lines.
Both components have the expected dimension $4$.
\end{thm}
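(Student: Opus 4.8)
The plan is to follow the pattern of the proof of Theorem~\ref{Conics_Cubic}, with the incidence variety of planes used there replaced by a construction adapted to intersections of two quadrics. First, let $N\subset \overline{M}_{0,0}(X,2)$ be the locus parametrizing double covers of $H$-lines. Since $\overline{M}_{0,0}(\mathbb{P}^{1},2)$ is irreducible of dimension $2$ and, by Theorem~\ref{Lines_Quartic}, $\overline{M}_{0,0}(X,1)\cong F(X)$ is irreducible of dimension $2$, the locus $N$ is irreducible of the expected dimension $4$. Its general member is not a birational stable map, so once we know that every component of $\overline{M}_{0,0}(X,2)$ has dimension $4$ (last paragraph), it follows that $N$ is a component.

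For the other component I would use the projection from a singular point, exactly as in the proof of Theorem~\ref{Lines_Quartic} (the smooth case being already known, assume $X$ singular). Write $X=Q_{1}\cap Q_{2}\subset\mathbb{P}^{5}$ with $Q_{1}$ a quadric cone with vertex $q\in\mathrm{Sing}(X)$ and $Q_{2}$ smooth at $q$; the projection $\pi\colon\mathbb{P}^{5}\dashrightarrow\mathbb{P}^{4}$ from $q$ maps $X$ birationally onto the smooth quadric threefold $Q'=\pi(Q_{1})$ and contracts the cone $S\in|H|$ of lines of $X$ through $q$ onto the irreducible curve $C=\pi(S)$. Tracking strict transforms on $\mathrm{Bl}_{q}X$ one checks that a general $H$-conic $D$ on $X$ (which avoids $q$ and meets $S$ transversally in two points) corresponds birationally to a smooth conic $D'=\pi(D)$ on $Q'$ meeting $C$ in two points, and conversely. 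Hence the component of $\overline{M}_{0,0}(X,2)$ generically parametrizing birational stable maps onto smooth conics is birational to the incidence variety
\[
I=\{\,(D',\{c_{1},c_{2}\})\mid D' \text{ a conic on } Q',\ c_{1},c_{2}\in D'\cap C\,\}.
\]
The projection $I\to\mathrm{Sym}^{2}C$ has fiber over $\{c_{1},c_{2}\}$ the set of conics on $Q'$ through $c_{1}$ and $c_{2}$, which is the $\mathbb{P}^{2}$ of planes $P$ in $\mathbb{P}^{4}$ containing the line $\overline{c_{1}c_{2}}$ (the conic being $P\cap Q'$). So $I$ is birational to a $\mathbb{P}^{2}$-bundle over $\mathrm{Sym}^{2}C$, and since $C$ is irreducible by Theorem~\ref{Lines_Quartic}, $I$ is irreducible of dimension $4$. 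Therefore the smooth $H$-conics on $X$ form a subset of $\overline{M}_{0,0}(X,2)$ whose closure is irreducible of dimension $4$, a component distinct from $N$.

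It remains to see that these are the only two components. Since $X$ is lci, every component of $\overline{M}_{0,0}(X,2)$ has dimension at least $4$; the locus of stable maps with reducible domain --- a pair of $H$-lines glued at a point --- has dimension at most $3$, so every component generically parametrizes an irreducible degree-$2$ stable map, i.e.\ a birational map onto a smooth conic or a double cover of an $H$-line. A dominant component is free of dimension exactly $4$ by Lemma~\ref{free_iff_dominant}; a non-dominant component of dimension $>2$ would, by Corollary~\ref{nondom}(2), sweep out a surface of one of the types listed there, but $X$ is factorial of Picard number $1$ with $H^{3}=4$, so it contains no planes, no $-K_{X}$-lines (as $2H\cdot\ell=1$ is impossible), and no contractible or non-$\mathbb{Q}$-Cartier divisors, which rules all of them out. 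Hence every component of $\overline{M}_{0,0}(X,2)$ has dimension exactly $4$, every component generically parametrizing smooth conics equals the irreducible locus of the previous paragraph, and the only other one is $N$.

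The main obstacle is the birational correspondence used above: as $\pi|_{X}$ is not a morphism one must control its exceptional behaviour --- $H$-conics through $q$ project to lines and form a lower-dimensional family, $H$-conics contained in $S$ map into $C$, and one must verify that the general conic meets $S$ transversally away from the singular points of $C$ and that the induced map onto the $\mathbb{P}^{2}$-bundle is genuinely birational. This rests on the irreducibility of $C$, where factoriality of $X$ enters via the proof of Theorem~\ref{Lines_Quartic}; the dimension bookkeeping in the third paragraph is routine.
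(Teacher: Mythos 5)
Your proof is correct and follows essentially the same route as the paper: the paper parametrizes the birational component by the $\mathbb{P}^{2}$-bundle $\Sigma=\{(P,V)\mid P\subset V\}$ over the image of $\mathrm{Sym}^{2}(C)$ in $\mathbb{G}(2,5)$, which under the projection $\pi$ from $q$ is exactly your incidence variety $I$ of conics on $Q'$ with two marked points of intersection with $C$. One small correction: Corollary \ref{nondom}(2) applies to anticanonical degree $2$ (i.e.\ $H$-lines on a del Pezzo threefold, since $-K_{X}=2H$), so to exclude non-dominant components of $H$-conics you should instead invoke Proposition \ref{NonDom-highA} together with Theorem \ref{highA}, which gives the same conclusion because a factorial quartic del Pezzo threefold has no surfaces with $a(Y,-K_{X})>1$.
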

\begin{proof}
When $X$ is smooth, then Theorem \ref{Conics_Quartic} is proved in \cite{Lehmann2019}. 
Hence we consider the singular case. 

Let $S$, $C$, and $\pi\colon \mathbb{P}^{5}\dashrightarrow \mathbb{P}^{4}$ be as in the proof of Theorem \ref{Lines_Quartic}.
As in the proof of Theorem \ref{Conics_Cubic}, the locus $N\subset \overline{M}_{0,0}(X,2)$ parametrizing double covers of $H$-lines is an irreducible component of dimension $4$.
So we check that $\overline{M}_{0,0}(X,2)\setminus N$ is irreducible.
A general element $(p_{1}, p_{2})\in \mathrm{Sym}^{2}(C)$ defines a plane spanned by $p_{1}, p_{2}$ and $q$.
It induces a rational map $\gamma \colon \mathrm{Sym}^{2}(C) \dashrightarrow \mathbb{G}(2,5)$.
Let $\Gamma$ be the image of this map. 
Consider the set $\Sigma = \{(P, V)\mid P\subset V\}\subset \Gamma \times \mathbb{G}(3,5)$.
The first projection is a $\mathbb{P}^{2}$ bundle.
Thus $\Sigma$ is irreducible since so is $C$.
Then one can construct a birational morphism $\varphi$ from $\Sigma$ to $\overline{M}_{0,0}(X, 2)$ away from the component $N$.
Indeed, if $(\gamma(c_{1}, c_{2}), V)$ is an element of $\Sigma$, then there is a conic $\alpha$ such that $X\cap V = \alpha \cup \pi^{-1}(c_{1}) \cup \pi^{-1}(c_{2})$.
Then we set $\varphi(\gamma(c_{1}, c_{2}), V) = \alpha$.
Conversely, we take an irreducible conic $\alpha$ on $X$ not passing through $q$.
Since $S\in |H|$,  we see that $S\cdot \alpha = 2$.
Hence $\pi(S\cap \alpha)$ can be viewed as an element of $\mathrm{Sym}^{2}(C)$ and this element maps to the plane $P$ by $\gamma$.
On the other hand, let $V$ be the projective $3$-space spanned by $\alpha$ and $q$.
Then, by construction, we see that $\varphi(P, V) = \alpha$.
Therefore the claim holds.
\end{proof}

\begin{cor}
Let $X$ be a cubic or quartic del Pezzo threefold with terminal factorial singularities and let $H$ be the fundamental divisor on $X$.
Then for each integer $d\ge 2$, the space $\overline{M}_{0,0}(X, d)$ parametrizing $H$-degree $d$ curves has two irreducible components.
One component generically parametrizes free birational stable maps, and the other parametrizes $d$-sheeted covers of $H$-lines.
Both components have the expected dimension $2d$.
\end{cor}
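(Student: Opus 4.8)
The plan is to argue by induction on $d\ge 2$, the base case $d=2$ being Theorems~\ref{Conics_Cubic} and~\ref{Conics_Quartic}. Since $-K_X=2H$ and $X$ is factorial, $\rho(X)=1$, so $X$ admits no birational contractions and in particular no E5 contractions; every curve on $X$ has even anticanonical degree, so $X$ carries no $-K_X$-lines; and an $H$-degree $d$ curve has anticanonical degree $2d$, so $\overline{M}_{0,0}(X,d)$ has expected dimension $2d+3-3=2d$. I first claim $X$ contains no surface $Y$ with $a(Y,-K_X)>1$. For such a $Y$ the adjoint divisor $K_{\tilde Y}-a(Y,-K_X)\phi^{*}K_X$ is pseudo-effective but not big (it lies on the boundary of $\overline{\mathrm{Eff}}^{1}(\tilde Y)$ by definition of the $a$-invariant), so its Iitaka dimension is $\le 1$, and it is not $1$ because $X$ has no $-K_X$-lines; hence by Theorem~\ref{highA} the normalization $\nu\colon\bar Y\to Y$ satisfies $(\bar Y,-\nu^{*}K_X)\cong(\mathbb{P}^{2},\mathcal{O}(1))$, $(\mathbb{P}^{2},\mathcal{O}(2))$, or $(Q,\mathcal{O}(1))$ with $Q$ a smooth quadric or a quadric cone. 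From $-K_X=2H$ and the projection formula, $4\,(H^{2}\cdot Y)=(-\nu^{*}K_X)^{2}\in\{1,4,2\}$; since $H^{2}\cdot Y$ is a positive integer this forces $(\bar Y,-\nu^{*}K_X)\cong(\mathbb{P}^{2},\mathcal{O}(2))$ and $H^{2}\cdot Y=1$, whence $\nu^{*}H=\mathcal{O}_{\mathbb{P}^{2}}(1)$ and $Y$ is a linear plane inside the embedding $X\hookrightarrow\mathbb{P}^{H^{3}+1}$ defined by $|H|$---impossible, as $X$ is factorial (cf.\ the proofs of Theorems~\ref{Conics_Cubic} and~\ref{Conics_Quartic}). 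Hence by Proposition~\ref{NonDom-highA} every component of $\overline{M}_{0,0}(X,d)$ is dominant, so free by Lemma~\ref{free_iff_dominant}, and of dimension exactly $2d$.

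Next, exactly as in the proof of Theorem~\ref{Conics_Cubic}, the closure $N_d\subset\overline{M}_{0,0}(X,d)$ of the locus of $d$-sheeted covers of $H$-lines is irreducible of dimension $2+(2d-2)=2d$ (using that $\overline{M}_{0,0}(X,1)$ is irreducible of dimension $2$, by Theorems~\ref{Lines_Cubic} and~\ref{Lines_Quartic}), hence an irreducible component. Conversely, let $M\ne N_d$ be a component and $f$ a general member; then $f$ is free, and by L\"{u}roth's theorem $f=h\circ g$ with $h$ birational onto a rational curve $C$ of $H$-degree $e$ and $g$ of degree $r$, and $h$ is again free (otherwise $f^{*}T_X=g^{*}h^{*}T_X$ would have a negative summand). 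If $r\ge 2$ and $e=1$, then $f$ is a multiple cover of an $H$-line, so $M\subseteq N_d$ and thus $M=N_d$---excluded; if $r\ge 2$ and $e\ge 2$, then $M$ is dominated by a family of dimension at most $2e+(2r-2)=2d-2(r-1)(e-1)<2d$, contradicting $\dim M=2d$. Therefore $r=1$: every component other than $N_d$ generically parametrizes \emph{birational} free stable maps, whose restricted tangent bundle then has at least two positive summands by Proposition~\ref{multiple_covers}.

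It remains to show there is exactly one birational component. For existence, glue a general $H$-line to a general birational free curve of $H$-degree $d-1$ (which exists by induction) at a point of their images; the two evaluation maps are dominant, so such a node exists, and it is free, hence smoothable, yielding a birational free curve of $H$-degree $d$. For uniqueness, let $M,M'$ be birational free components. Their general members have restricted tangent bundles with at least two positive summands, so Movable Bend-and-Break (Theorem~\ref{MBB_noE5}) applies and each of $M,M'$ contains a stable map $C_1\cup C_2\to X$ whose restrictions to $C_1$ and $C_2$ are free rational curves of $H$-degrees $d_1,d_2$ with $d_1+d_2=d$ and $1\le d_i<d$. By the inductive hypothesis each of these restrictions lies in one of the finitely many known components of $\overline{M}_{0,0}(X,d_i)$. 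The locus of two-component stable maps with two fixed such components as branches, joined at a node, is irreducible---it maps onto a fiber product over $X$, via the two evaluation maps, of two irreducible varieties---hence lies in a single component of $\overline{M}_{0,0}(X,d)$, which is birational upon smoothing its general member. Linking $M$ and $M'$ through common boundary strata of this kind then forces $M=M'$. Together with the previous paragraphs, $\overline{M}_{0,0}(X,d)$ consists of exactly $N_d$ and one birational component, both of dimension $2d$, completing the induction (and identifying the unique Manin component of Theorem~\ref{GMC_dP} as the birational one).

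The main obstacle is the uniqueness step: showing that the boundary strata produced by Movable Bend-and-Break link all birational components into one is the inductive core of Geometric Manin's Conjecture, to be carried out in the spirit of Lehmann--Tanimoto and Beheshti--Lehmann--Riedl--Tanimoto. The delicate points there are the irreducibility of the relevant fiber products of lower-degree components over $X$, and the check that smoothing a chain one of whose pieces lies in a cover component $N_{d_i}$ still lands in the birational component rather than in $N_d$ (which holds because a general such chain maps its two line-pieces to distinct lines of $X$). Everything in the first two paragraphs and the existence part of the third is a direct citation or a short dimension count.
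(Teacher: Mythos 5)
Your first two paragraphs and the existence part of the third are sound and in places more explicit than the paper: the L\"{u}roth/dimension-count elimination of multiple-cover components other than $N_d$, and the verification that $X$ carries no subvarieties with $a>1$, are steps the paper only records implicitly (the latter in the proof of Theorem \ref{GMC_dP}). The genuine gap is in the uniqueness step, at two points. First, your claim that the locus of two-component stable maps with branches in two fixed irreducible components is irreducible because ``it maps onto a fiber product over $X$ of two irreducible varieties'' invokes a false general principle: a fiber product of irreducible varieties over $X$ need not be irreducible, and this is exactly the failure mode that $a$-covers detect. The paper supplies the missing input via Proposition \ref{factor through a-cover} together with Theorem \ref{a-covers}: if the general fiber of the evaluation map $R'_{d-1}\to X$ were reducible, that map would factor rationally through an $a$-cover, which for these $X$ is necessarily (birationally) the universal family of $H$-lines by Lemma \ref{conicbdl}, contradicting $R_{d-1}\neq N_{d-1}$. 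Only after this is the gluing locus $\Sigma=\overline{M}_{0,1}(X,1)\times_X R'_{d-1}$ known to be irreducible. You flag ``the irreducibility of the relevant fiber products'' as a delicate point but offer no mechanism for it, and the mechanism is not a refinement of your argument --- it is the argument.

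Second, even granting irreducibility of each gluing locus, Movable Bend-and-Break only produces for each of $M$ and $M'$ \emph{some} two-component degeneration, possibly of different numerical types $(d_1,d_2)$ and with branches in different components of lower degree; the statement that these boundary strata ``link'' $M$ and $M'$ is precisely what has to be proved, and you defer it to future work. The paper avoids the combinatorics of linking entirely: it iterates MBB and uses the degeneration of a free $H$-conic into two free $H$-lines to show that \emph{every} birational component contains a chain of $d$ free $H$-lines, and such a chain is a smooth point of $\overline{M}_{0,0}(X,d)$ lying in the single irreducible locus $\Sigma$ above; hence every birational component is the unique component containing $\Sigma$. As written, your proposal establishes everything except the heart of the uniqueness claim.
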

\begin{proof}
    One can prove the claim as in \cite{Lehmann2019}*{Theorem 7.9}. 
    We give a proof for completeness.
    When $d=2$, the claim is proved in Theorem \ref{Conics_Cubic} and Theorem \ref{Conics_Quartic}.
    We consider the case $d \ge 3$.
    Let $N_{d} \subset \overline{M}_{0,0}(X, d)$ be the locus parametrizing $d$-sheeted covers of $H$-lines.
    Then by Theorem \ref{Lines_Cubic} and Theorem \ref{Lines_Quartic}, and the dimension count, we see that $N_{d}$ is an irreducible component of $\overline{M}_{0,0}(X, d)$ of dimension $2d$.
    \par
    Let $R_{d} \subset \overline{M}_{0,0}(X, d)$ be another component. 
    Since $X$ does not have subvarieties with higher $a$-invariant, $R_{d}$ generically parametrizes free curves by Lemma \ref{free_iff_dominant}. 
    By dimension count as in the proof of Proposition \ref{multiple_covers}, $M$ generically parametrizes birational maps. 
    Then by Theorem \ref{MBB_noE5} and by induction of $d$,
    $R_{d}$ contains a chain of free curves of $H$-degree $\le 2$.
    Moreover, since any smooth $H$-conic can be deformed to a union of two free $H$-lines, using \cite{Lehmann2019}*{Lemma 5.9}, we see that $R_{d}$ contains a chain $[f]$ of free $H$-lines of length $d$.
    Take a subchain of length $d-1$, which is a smooth point of $\overline{M}_{0,0}(X,d-1)$. 
    Smoothing this chain (this operation is valid after taking a resolution $\tilde{X}\rightarrow X$ by \cite{Kollar1996}*{II.7.6} and the fact that images of free curves on $\tilde{X}$ to $X$ are also free), we obtain a free curve of degree $d-1$, and hence $[f]$ is contained in the locus $\Sigma := \overline{M}_{0,1}(X,1)\times_{X} R'_{d-1} \subset \overline{M}_{0,0}(X,d)$, where $R'_{d-1} \subset \overline{M}_{0,1}(X,d-1)$ is the corresponding component to $R_{d-1}\subset \overline{M}_{0,0}(X,d-1)$.
    Assume that the general fiber of the evaluation map ${s}_{d-1}\colon R'_{d-1} \rightarrow X$ is not irreducible. 
    Then by Proposition \ref{factor through a-cover}, ${s}_{d-1}$ factors rationally through an $a$-cover $f\colon Y\rightarrow X$.
    However, Theorem \ref{a-covers} shows that $f$ factors rationally through $\overline{M}_{0,1}(X,1)\rightarrow X$. 
    This is a contradiction since $R_{d-1} \neq N_{d-1}$. 
    Thus the general fiber of ${s}_{d-1}$ is irreducible.
    Then the locus $\Sigma$ is irreducible by Theorem \ref{Lines_Cubic}, Theorem \ref{Lines_Quartic}, and induction on $d$.
    Since $[f]$ is a smooth point, $R_{d}$ is the unique component containing $\Sigma$: if there is another component $M \neq R_{d}, N_{d}$, then $M$ also contains the locus $\Sigma$ as discussed above, which contradicts the irreducibility of $\Sigma$. 
    Therefore, $\overline{M}_{0,0}(X,d)$ has exactly two components $R_{d}$ and $N_{d}$, completing the proof.
\end{proof}

We are now ready to prove Geometric Manin's Conjecture for our case: 
\newtheorem*{MainThm3}{\rm\bf Theorem \ref{GMC_dP}}
\begin{MainThm3}
    Let $X$ be a factorial del Pezzo threefold with an ample generator $H$.
    Suppose that $H^{3} \ge 3$.
    Then Geometric Manin's Conjecture holds for $X$ with $c = 1$ and $\alpha$ is the class of $H$-conics.
\end{MainThm3}

\begin{rem}
        Note that Geometric Manin's Conjecture is known for smooth del Pezzo threefolds by \cite{Lehmann2019} when $H^{3} \ge 3$ and \cite{Lehmann2019}, \cite{Shimizu2022} when $H^{3} \le 2$ and $X$ is general.
        We also note that any factorial del Pezzo threefold with $H^{3} \ge 5$ is indeed smooth by \cite{Prokhorov2013a}*{Lemma 3.3} and \cite{Prokhorov2017}.
        Hence, it suffices to consider the case when $X$ is singular and $H^{3} = 3, 4$.
 \end{rem}

\begin{proof}[Proof of Theorem \ref{GMC_dP}]
Since $X$ has neither contractible divisors nor $-K_{X}$-lines, $X$ has no subvarieties $Y$ with $a(Y, -K_{X})>1$ by Theorem \ref{highA}.
We also see that if $f\colon Y\rightarrow X$ is an $a$-cover with $Y$ smooth, then $\kappa(Y, K_{Y}-f^{*}K_{X}) = 2$ by Theorem \ref{a-covers}.
\par
Let $F(X)$ be the Fano surface of $H$-lines $\ell$ on $X$, which is isomorphic to $\overline{M}_{0,0}(X, \ell)$ and irreducible by Theorem \ref{Lines_Cubic} and Theorem \ref{Lines_Quartic}.
Let $\mathcal{U}\rightarrow F(X)$ be the universal family with the evaluation morphism $s\colon \mathcal{U}\rightarrow X$.
Take a smooth resolution $\tilde{\mathcal{U}}\rightarrow \mathcal{U}$.
Then the composition $\tilde{s}\colon \tilde{\mathcal{U}}\rightarrow \mathcal{U}\rightarrow X$ is an $a$-cover with $\kappa(\tilde{\mathcal{U}}, K_{\tilde{\mathcal{U}}} - \tilde{s}^{*}K_{X}) = 2$ by Proposition \ref{factor through a-cover} and the fact that $\tilde{s}$ is birational to the Stein factorization.
For each integer $d \ge 1$, let $N_{d} \subset \overline{M}_{0,0}(X,d\ell)$ be the unique component parametrizing $d$-sheeted covers of $H$-lines.
We also take the locus $\tilde{N}_{d} \subset \overline{M}_{0,0}(\tilde{\mathcal{U}})$ parametrizing $d$-sheeted covers of $-K_{\tilde{\mathcal{U}}}$-conics contained in the Iitaka fibration for $K_{\mathcal{U}}-\tilde{s}^{*}K_{X}$. 
This locus is indeed a component of $\overline{M}_{0,0}(\tilde{\mathcal{U}})$ since such $-K_{X}$-conics dominates $\mathcal{U}$ and by dimension count. 
Then $\tilde{s}$ induces a dominant map $\tilde{N}_{d} \dashrightarrow N_{d}$, hence $N_{d}$ are accumulating components.
On the other hand, the other component $R_{d} \subset \overline{M}_{0,0}(X, d\ell)$ generically parametrizing birational stable maps is Manin for each $d\ge 2$ since any $a$-cover factors rationally through $\tilde{s}$ by Lemma \ref{conicbdl}.
Thus the claim holds.
\end{proof}

\bibliography{math}
\end{document}